\DeclareMathOperator*{\argmin}{argmin}
\newcommand{\bq}{\begin{equation}}
\newcommand{\eq}{\end{equation}}
\newcommand{\R}{\mathbb{R}}
\newcommand{\abs}[1]{\left\vert#1\right\vert}
\newcommand{\norm}[1]{\left\Vert#1\right\Vert}
\newcommand{\G}{\mathcal{G}}
\newcommand{\bO}{\mathcal{O}}
\newcommand{\Dt}{\mathcal{D}}
\newcommand{\Sf}{\mathbb{S}^{2}}
\newcommand{\Tf}{\mathcal{T}}
\newcommand{\Zf}{\mathcal{Z}}
\newcommand{\Ef}{\mathcal{E}}
\newcommand{\cp}{\text{cp}}
\newcommand{\MA}{Monge-Amp\`ere\xspace}
\algnewcommand{\LineComment}[1]{\State \(\triangleright\) #1}
\newtheorem{theorem}{Theorem}
\theoremstyle{lemma}
\newtheorem{lemma}[theorem]{Lemma}
\newtheorem{lem}[theorem]{Lemma}
\newtheorem{definition}[theorem]{Definition}
\newtheorem{remark}[theorem]{Remark}
\newtheorem{hypothesis}[theorem]{Hypothesis}
\theoremstyle{remark}
\newcommand\appendix@section[1]{%
\refstepcounter{section}%
\orig@section*{Appendix \@Alph\c@section: #1}%
}
\let\orig@section\section
\g@addto@macro\appendix{\let\section\appendix@section}
\begin{document}

\title[Optimal transport on the sphere]{A convergence framework for optimal transport on the sphere}

\author{Brittany Froese Hamfeldt}
\address{Department of Mathematical Sciences, New Jersey Institute of Technology, University Heights, Newark, NJ 07102}
\email{bdfroese@njit.edu}
\author{Axel G. R. Turnquist}
\address{Department of Mathematical Sciences, New Jersey Institute of Technology, University Heights, Newark, NJ 07102}
\email{agt6@njit.edu}

\thanks{The first author was partially supported by NSF DMS-1619807 and NSF DMS-1751996. The second author was partially supported by  an NSF GRFP}

\begin{abstract}
We consider a PDE approach to numerically solving the optimal transportation problem on the sphere. We focus on both the traditional squared geodesic cost and a logarithmic cost, which arises in the reflector antenna design problem.  At each point on the sphere, we replace the surface PDE with a generalized Monge-Amp\`ere type equation posed on the tangent plane using normal coordinates.  The resulting nonlinear PDE can then be approximated by any consistent, monotone scheme for generalized Monge-Amp\`ere type equations on the plane.  Existing techniques for proving convergence do not immediately apply because the PDE lacks both a comparison principle and a unique solution, which makes it difficult to produce a stable, well-posed scheme. By augmenting the discretization with an additional term that constrains the solution gradient, we obtain a strong form of stability. A modification of the Barles-Souganidis convergence framework then establishes convergence to the mean-zero solution of the original PDE.\end{abstract}

\date{\today}    
\maketitle
We consider the problem of optimal transportation on the sphere.  That is, given two prescribed density functions $f_1$ and $f_2$, we seek a mapping $T:\Sf\to\Sf$ such that
\bq\label{eq:OT}
T = \argmin\limits_{T_\# f_1 = f_2} \int_{\Sf} c(x,T(x)) f_1(x) dS(x).
\eq
Here $c(x,y)$ is the cost of transporting a unit of mass from $x$ to $y$ and $T_\# f_1 = f_2$ indicates that
\[ \int_A f_1(x)\,dS(x) = \int_{T(A)} f_2(y)\,dS(y) \]
for every measurable $A \subset \Sf$.

Perhaps the simplest cost is the squared geodesic distance
\[ c(x,y) = \frac{1}{2}d_{\mathbb{S}^2}(x,y)^2, \]
where $d_{\Sf}(x,y)$ denotes the geodesic distance between $x,y\in\Sf$.  This cost function has recently been applied to the problem of mesh generation on the sphere in the context of meteorology~\cite{McRae_OTonSphere,Weller_OTonSphere}.

A second cost of particular interest is the log cost
\[ c(x,y) = -\log \norm{x-y}, \]
which arises in the reflector antenna design problem~\cite{GlimmOliker_SingleReflector,Wang_Reflector2}. The notation $\norm{\cdot}$ denotes the Euclidean distance in the ambient space $\mathbb{R}^3$.

In the past several years, several new methods have been introduced to solve the optimal transportation problem in Euclidean space.  Most of these have been restricted to the quadratic cost function~\cite{BenamouDuval,BFO_OTNum,FroeseTransport,Levy_OT,Rubinstein_OT,Prins_OT}.  A few methods are available for problems with non-quadratic cost including linear programing methods~\cite{Schmitzer_OT} and a least-squares method introduced for a non-quadratic cost problem in geometric optics~\cite{Yadav_MA}.

Recently, some progress has been made in the solution of the optimal transport problem on the sphere.  The work of~\cite{Weller_OTonSphere} used a geometric interpretation of a \MA type equation on the sphere to produce the first such method, which applies to the squared geodesic cost.  A finite element solution of this \MA type equation was produced in~\cite{McRae_OTonSphere}. For problems posed on a subset of the sphere, the stereographic projection can be used to reframe the problem as an optimal transport problem on the plane (with non-quadratic cost); this was the approach of~\cite{RomijnSphere}. For a particular logarithmic cost function, the semi-discrete optimal transportation problem on the sphere admits a particularly nice interpretation in terms of generalized (spherical) power diagrams.  The work of~\cite{Cui_sphericalOT} recently exploited this interpretation to develop a fast, convergent method using techniques from computational geometry.

While several numerical methods have been proposed, and proof of convergence is sometimes possible in special cases, we are not aware of any general techniques for proving the convergence of PDE based methods for optimal transportation on the sphere.  The problem possesses several challenges that prevent the direct use of existing techniques.  (1) The curved geometry requires careful interpretation of the terms in the PDE operator. (2) Because solutions of the PDE are unique only up to additive constants, naive discretizations typically lead to schemes that are ill-posed and may not have any solution~\cite{HL_LagrangianGraphs}. This structure also makes it very challenging to establish the stability of approximation schemes.  (3) The PDE has no comparison principle, which precludes the direct use of the Barles-Souganidis convergence framework.  (4) The domain has no boundary, and thus boundary conditions cannot be used to build in the required stability and well-posedness as has been previously done for optimal transport problems in Euclidean space~\cite{HamfeldtBVP2}. 

In this article, we produce a new convergence framework for numerical methods for optimal transportation on the sphere.  Moreover, the approach is flexible,  encompassing both the squared geodesic cost and the log cost, with the potential to easily extend to other cost functions.  The method involves discretizing a \MA type equation on the sphere.  At each point on the sphere, we relate this to an equivalent PDE on the tangent plane through a careful choice of local coordinates that preserve the structure of the PDE operator. The resulting equation can be discretized using monotone generalized finite difference approximations and can be utilized for a wide variety of grids. 
The scheme is augmented with a constraint on the solution gradient and a careful shift of the resulting discrete solution.  These modifications yield a strong form of stability that allows us to modify the Barles-Souganidis framework to prove convergence.  

\section{Background}\label{sec:background}
\subsection{Optimal transport on the sphere}

We consider points $x,y$ lying on a unit sphere $\Sf$ centered at the origin.  We are interested in two different cost functions $c(x,y)$: the squared geodesic distance on the sphere,
\bq\label{eq:squareCost}
c(x,y) =  \frac{1}{2}d_{\Sf}(x,y)^2 = \frac{1}{2}\left(2\sin^{-1}\left(\frac{\norm{x-y}}{2}\right)\right)^2,
\eq
and the log-cost arising in the reflector antenna problem,
\bq\label{eq:logCost}
c(x,y) = -\log \norm{ x-y }.
\eq

The optimal map corresponding to each cost function is determined from the conditions
\vspace*{-4pt}
\bq\label{eq:mapConditionSphere}
\begin{cases}
\nabla_{\mathbb{S}^2,x}c \left( x,T(x,p) \right) = -p, & x\in\Sf, p\in \Tf_x\\
T(x,p) \in \Sf
\end{cases} \vspace*{-4pt}
\eq
where $\Tf_x$ denotes the tangent plane at $x$. The solution to the optimal transport problem is then given by
\bq\label{eq:PDE1}
F \left( x,\nabla_{\mathbb{S}^2} u(x), D_{\mathbb{S}^2}^2u(x) \right) = 0
\eq
where
\bq\label{eq:OTPDE}
F(x,p,M) \equiv -\det \left( M+A(x,p) \right) + H(x,p)
\eq
subject to the $c$-convexity condition, which requires
\bq\label{eq:cconvex}
D_{\mathbb{S}^2}^2u(x) + A(x,\nabla_{\mathbb{S}^2} u(x)) \geq 0.
\eq
Here
\begin{align*}
A(x,p) &= D_{\mathbb{S}^2,xx}^2c \left( x,T(x,p) \right)\\
H(x,p) &= \abs{\det{D_{\mathbb{S}^2,xy}^2c \left( x,T(x,p) \right)}}f_1(x)/f_2 \left( T(x,p) \right),
\end{align*}
and the PDE now describes a nonlinear relationship between the surface gradient and Hessian on the sphere.


\subsection{Regularity}
We consider the optimal transport problem~\eqref{eq:OT} under the following two sets of hypotheses.
\begin{hypothesis}[Conditions on data (smooth)]\label{hyp:Smooth}
We require problem data to satisfy the following conditions:
\begin{enumerate}
\item[(a)] There exists some $m>0$ such that $f_2(x) \geq m$ for all $x\in\Sf$.
\item[(b)] The mass balance condition holds, $\int_{\Sf} f_1(x)\,dx = \int_{\Sf} f_2(y)\,dy$.
\item[(c)] The cost function is either $c(x,y) = \frac{1}{2}d_{\Sf}(x,y)^2$ or $c(x,y) = -\log \norm{ x-y }$.
\item[(d)] The data satisfies the regularity requirements $f_1, f_2 \in C^{1,1}(\Sf)$.
\end{enumerate}
\end{hypothesis}

\begin{hypothesis}[Conditions on data (non-smooth)]\label{hyp:Nonsmooth}
We require problem data to satisfy the following conditions:
\begin{enumerate}
\item[(a)] There exists some $m>0$ such that $f_2(x) \geq m$ for all $x\in\Sf$.
\item[(b)] The mass balance condition holds, $\int_{\Sf} f_1(x)\,dx = \int_{\Sf} f_2(y)\,dy$.
\item[(c)] The cost function is $c(x,y) = \frac{1}{2} d_{\Sf}(x,y)^2$.
\item[(d)] The data satisfies the regularity requirement $f_1 \in L^p(\Sf)$ for some $p \geq 1$.
\end{enumerate}
\end{hypothesis}

The first set of hypotheses leads to smooth solutions.  The second set of hypotheses relaxes the assumptions on the data to permit non-smooth solutions.  While this is valid for both cost functions, we consider this relaxation only in the case of the squared geodesic cost.  The Lipschitz continuity of this cost function will allow us to adapt our convergence framework to the non-smooth setting.  In particular, the following regularity results are adapted from Loeper~\cite{Loeper_OTonSphere}.

\begin{theorem}[Regularity]\label{thm:regularity}
The optimal transport problem~\eqref{eq:OT} with data satisfying Hypothesis~\ref{hyp:Smooth} has a solution $u\in C^3(\Sf)$.  The optimal transport problem~\eqref{eq:OT} with data satisfying Hypothesis~\ref{hyp:Nonsmooth} has a solution $u\in C^1(\Sf)$.
\end{theorem}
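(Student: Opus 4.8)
This statement is essentially a transcription into the present notation of the regularity theory of Loeper~\cite{Loeper_OTonSphere}. The plan is to first recall the existence of a $c$-convex potential, and then, for each cost, to verify the structural hypotheses under which the available regularity theory for \MA type equations applies; since $u$ in~\eqref{eq:PDE1} is determined only up to an additive constant, it suffices to exhibit one representative of the claimed regularity. For data satisfying either hypothesis, $f_1\,dS$ is a probability measure absolutely continuous with respect to surface measure, while each cost in~\eqref{eq:squareCost}--\eqref{eq:logCost} is continuous and bounded below away from its singular set --- the antipodal set for the squared geodesic cost, the diagonal for the log cost --- with only an integrable singularity there. The classical theory of optimal transport on compact manifolds (McCann) then produces a unique optimal map, realized via~\eqref{eq:mapConditionSphere} as $x\mapsto T(x,\nabla_{\Sf}u(x))$ for a $c$-convex potential $u$ solving~\eqref{eq:PDE1}--\eqref{eq:cconvex} in the Brenier/Aleksandrov sense.

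For the smooth case I would assemble three ingredients. (i) The Ma--Trudinger--Wang condition holds on $\Sf$ for both the squared geodesic cost and the log cost; this is Loeper's key computation, and it in fact yields the strong, strictly positive form of the condition. (ii) This strong MTW structure, combined with the uniform lower bound $f_2\ge m$, gives the crucial a priori estimate that the optimal map stays uniformly away from the singular set of the cost, so that $d_{\Sf}(x,T(x))$ is bounded away from $\pi$ (respectively $\norm{x-T(x)}$ is bounded away from $0$); in particular the cost is smooth along the support of the transport, between domains that are $c$-convex. (iii) The right-hand side $H$ in~\eqref{eq:OTPDE} is then trapped between two positive constants and is $C^{1,1}$ in its arguments, using that $f_1,f_2$ lie in $C^{1,1}(\Sf)$ and are bounded above and below by positive constants. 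Granting (i)--(iii), the interior and global regularity theory for MTW costs (Loeper; Liu--Trudinger--Wang; Delano\"e--Loeper) first gives $u\in C^{1,\alpha}(\Sf)$; differentiating~\eqref{eq:PDE1} and applying linear elliptic Schauder estimates then bootstraps $u$ to $C^{3,\alpha'}(\Sf)\subset C^3(\Sf)$, the gain of two derivatives over $C^{1,1}$ data being the expected one for \MA type equations.

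For the non-smooth case --- the squared geodesic cost with $f_1\in L^p(\Sf)$, $p\ge 1$, and $f_2\ge m$ --- I would instead invoke Loeper's continuity estimate for this cost: the strong MTW property of $d_{\Sf}(\cdot,\cdot)^2$ is robust enough that mere absolute continuity of $f_1$, together with the lower bound on $f_2$ (which again keeps $T$ away from the cut locus, where the cost is otherwise smooth), already forces the $c$-subdifferential of $u$ to be single-valued at every point and continuous, i.e.\ the optimal map is a continuous map. Equivalently, $\nabla_{\Sf}u$ is continuous, so $u\in C^1(\Sf)$.

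The main obstacle is ingredient (ii): controlling the interaction between the transport and the singular set of the cost. In contrast to the Euclidean quadratic cost, $d_{\Sf}(x,y)^2$ is not $C^2$ --- indeed not $C^1$ --- across the cut locus, and $-\log\norm{x-y}$ is unbounded on the diagonal, so the regularity machinery cannot be applied verbatim; one must first establish the quantitative stay-away bound, and it is precisely here that the sphere-specific strong MTW inequalities of Loeper, rather than the generic weak condition, are indispensable. Once this is secured the remaining pieces --- existence after McCann, verification of the MTW inequality, and the Schauder bootstrap --- are by now routine.
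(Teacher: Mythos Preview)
Your outline for the smooth case is correct and in fact considerably more detailed than the paper's own treatment, which simply quotes Loeper's $C^{3,\alpha}$ theorem for $C^{1,1}$ data verbatim; your unpacking of the MTW condition, the stay-away estimate, and the Schauder bootstrap is the standard mechanism behind that citation and would be accepted as a proof sketch.

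The non-smooth case, however, has a genuine gap. Loeper's $C^1$ result for the squared geodesic cost is not stated for arbitrary absolutely continuous source measures: the hypothesis is the quantitative integrability condition
\[
\int_{B_\epsilon(x)} f_1(y)\,dy \le h(\epsilon)\,\epsilon \quad\text{for all } x\in\Sf,\ \epsilon>0, \quad \lim_{\epsilon\to 0}h(\epsilon)=0,
\]
and Loeper himself only observes that this holds for $f_1\in L^p$ with $p>2$. Since Hypothesis~\ref{hyp:Nonsmooth} allows $p\ge 1$, your assertion that ``mere absolute continuity of $f_1$'' suffices does not match the cited theorem and leaves the range $1\le p\le 2$ uncovered. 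The paper closes this gap with a short explicit lemma: in spherical coordinates about $x$ one has $\int_{B_\epsilon(x)} f_1 = \int_0^{2\pi}\!\int_0^\epsilon f_1\sin\phi\,d\phi\,d\theta \le \epsilon\int_0^{2\pi}\!\int_0^\epsilon f_1\,d\phi\,d\theta$, and the last factor tends to zero as $\epsilon\to 0$ by absolute continuity of the Lebesgue integral of an $L^1$ function, which is exactly the required $h(\epsilon)$. In other words, the two-dimensionality of the sphere (the extra factor $\sin\phi\le\epsilon$ from the area element) is what makes the $L^1$ case work, and this needs to be said.
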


See Appendix~\ref{app:regularity} for more details on this result.


The solution to~\eqref{eq:PDE1} is unique only up to additive constants.  In order to select the unique mean-zero solution, we add the additional constraint
\bq\label{eq:meanZero}
\langle u \rangle = 0.
\eq
where $\langle \cdot \rangle$ denotes the average of $u$ over $\Sf$.

While this problem can be interpreted classically under fairly general assumptions, for very general density functions ($f_1, f_2 \in L^p(\Sf)$) or for more general manifolds (including smooth compact manifolds such as certain ellipsoids~\cite{figalli} even with $f_1, f_2 \in C^{\infty}(\Sf)$), $C^2$ solutions $u$ need not exist.  Moreover, the type of convergence analysis frequently used for classical solutions of linear equations is not easily adapted to constrained fully nonlinear equations.  For these reasons, it is also advantageous to be able to interpret the system~\eqref{eq:mapConditionSphere}-\eqref{eq:meanZero} in a weak (viscosity) sense.  

To define these weak solutions, we introduce the notation $\Ef(F)$ to denote the space of functions on which the PDE operator~$F$ is elliptic.  We also require the concepts of upper and lower envelopes of a function.

%

\begin{definition}[Semi-continuous envelopes]\label{def:envelopes}
The \emph{upper and lower semicontinuous envelopes} of a function $u$ are given by
\[ u^*(x) = \limsup\limits_{y\to x} u(y), \quad u_*(x) = \liminf\limits_{y\to x} u(y). \]
\end{definition}

\begin{definition}[Viscosity Solutions]\label{def:viscosity}
An upper (lower) semicontinuous function $u: \Sf \rightarrow \mathbb{R}$ is a \emph{viscosity sub (super)-solution } of the PDE~\eqref{eq:PDE1} if for every $x_0 \in \Sf$ and $\phi \in C^{\infty}(\Sf) \cap\Ef(F)$ such that $u - \phi$ has a local maximum (minimum) at $x_0$ we have
\[
F_*^{(*)}(x_0, \phi(x_0), \nabla_{\Sf}\phi(x_0), D^2_{\Sf} \phi(x_0)) \leq (\geq) 0.
\]
%

A continuous function $u:\Omega\to\R$ is a \emph{viscosity solution} of~\eqref{eq:PDE1} if it is both a sub-solution and a super-solution. 
\end{definition}


\subsection{Numerical methods for fully nonlinear elliptic equations}

In order to build convergent methods for \MA type equations on the sphere, we wish to build upon recent developments in the approximation of fully nonlinear elliptic equations.

A powerful contribution to the numerical approximation of elliptic equations was provided by the Barles-Souganidis framework, which states that the solution to a scheme that is consistent, monotone, and $L^{\infty}$-stable will converge to the viscosity solution, provided the underlying PDE satisfies a comparison principle~\cite{BSnum}. The original paper demonstrates the convergence framework posed on an open set $\Omega \subset \mathbb{R}^n$. In our convergence proof, this approach will be naturally adapted to $\Sf$.

In this article, we consider finite difference schemes that have the form
\bq\label{eq:approx1} F^h \left( x,u(x),u(x)-u(\cdot) \right) = 0 \quad x\in \G^h \eq
and
\bq\label{eq:h} h = \sup\limits_{x\in\Omega}\min\limits_{y\in\G^h}\norm{ x-y } \eq
denotes the grid resolution.

In this setting, the properties required by the Barles-Souganidis framework can be defined as follows.  Consider the PDE
\bq\label{eq:PDE} F(x,\nabla\phi(x),D^2\phi(x)) = 0, \ x \in \Omega \eq

\begin{definition}[Consistency]\label{def:consistency}
The scheme~\eqref{eq:approx1} is \emph{consistent} with the PDE~\eqref{eq:PDE}
 if for any smooth function $\phi$ and $x\in\bar{\Omega}$,
\[ \limsup_{h\to0,y\to x, z\in\G^h\to x,\xi\to0} F^h(z,\phi(y)+\xi,\phi(y)-\phi(\cdot)) \leq F^*(x,\phi(x),\nabla\phi(x),D^2\phi(x)), 
\]
\[ \liminf_{h\to0,y\to x, z\in\G^h\to x,\xi\to0} F^h(z,\phi(y)+\xi,\phi(y)-\phi(\cdot)) \geq F_*(x,\phi(x),\nabla\phi(x),D^2\phi(x)). \]
\end{definition}

To consistent schemes, we also associate a truncation (consistency) error $\tau(h)$ .
\begin{definition}[Truncation error]\label{def:truncation}
The truncation error $\tau(h) > 0$ of the scheme~\eqref{eq:approx1} is a quantity chosen so that for every smooth function $\phi$
\[ \limsup\limits_{h\to0}\max\limits_{x\in\G^h}\frac{\abs{F^h(x,\phi(x),\phi(x)-\phi(\cdot)) - F(x,\nabla\phi(x),D^2\phi(x))}}{\tau(h)} < \infty. \]
\end{definition}

\begin{definition}[Monotonicity]\label{def:monotonicity}
The scheme~\eqref{eq:approx1} is \emph{monotone} if $F^h$ is a non-decreasing function of its final two arguments.
\end{definition}

\begin{definition}[Proper]\label{def:proper}
The scheme~\eqref{eq:approx1} is \emph{proper} if $F^h$ is an increasing function of its second argument.
\end{definition}

\begin{definition}[Stability]\label{def:stability}
The scheme~\eqref{eq:approx1} is \emph{stable} if there exists $M\in\R$ (independent of $h$) such that whenever $u^h$ is a solution of~\eqref{eq:approx1} then $\|u^h\|_\infty \leq M$.
\end{definition}

This convergence framework does not apply to all elliptic PDEs, including~\eqref{eq:PDE1}, which does not have the required comparison principle.  Nevertheless, it provides an important starting point for the development of convergent numerical methods.  In particular, monotone schemes possess a weak form of a discrete comparison principle even if the limiting PDE does not~\cite[Lemma~5.4]{hamfeldt2}.  If the scheme additionally exhibits an increasing dependence on the function $u$ itself, we obtain a traditional strong form of the discrete comparison principle that guarantees solution uniqueness.

%

\begin{lem}[Discrete comparison principle~{\cite[Theorem~5]{ObermanSINUM}}]\label{lem:discreteComp}
Let $F^h$ be a monotone, proper scheme and $F^h(x,u(x),u(x)-u(\cdot)) \leq F^h(x,v(x),v(x)-v(\cdot))$ for every $x\in\G^h$.  Then $u(x) \leq v(x)$ for every $x\in\G^h$.
\end{lem}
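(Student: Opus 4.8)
The plan is the standard argument by contradiction, carried out at the grid point where the difference $u-v$ is largest, using monotonicity in the difference arguments together with strict monotonicity (properness) in the value argument.

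First I would set $M = \max_{x\in\G^h}\left(u(x)-v(x)\right)$, which is attained at some $x_0\in\G^h$ (automatic when $\G^h$ is finite, and in any case we may assume the maximum of the bounded quantity $u-v$ is achieved). Suppose for contradiction that $M>0$. At $x_0$ two facts hold: since $u(x_0)-v(x_0)=M\geq u(y)-v(y)$ for every $y\in\G^h$, rearranging gives $u(x_0)-u(y)\geq v(x_0)-v(y)$ for all $y$, i.e. $u(x_0)-u(\cdot)\geq v(x_0)-v(\cdot)$ componentwise; and $u(x_0)=v(x_0)+M>v(x_0)$.

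Now I would chain the two monotonicity properties. Because $F^h$ is non-decreasing in its final (difference) argument,
\[ F^h\!\left(x_0,u(x_0),u(x_0)-u(\cdot)\right)\;\geq\; F^h\!\left(x_0,u(x_0),v(x_0)-v(\cdot)\right), \]
and because $F^h$ is a strictly increasing function of its second argument (properness) and $u(x_0)>v(x_0)$,
\[ F^h\!\left(x_0,u(x_0),v(x_0)-v(\cdot)\right)\;>\; F^h\!\left(x_0,v(x_0),v(x_0)-v(\cdot)\right). \]
Combining these gives $F^h(x_0,u(x_0),u(x_0)-u(\cdot)) > F^h(x_0,v(x_0),v(x_0)-v(\cdot))$, contradicting the hypothesis $F^h(x,u(x),u(x)-u(\cdot))\leq F^h(x,v(x),v(x)-v(\cdot))$ evaluated at $x=x_0$. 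Hence $M\leq 0$, i.e. $u(x)\leq v(x)$ for all $x\in\G^h$.

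The argument is essentially routine; the only point needing care is the bookkeeping of inequality directions — specifically that at a maximum of $u-v$ the \emph{differences} $u(x_0)-u(y)$ dominate $v(x_0)-v(y)$, so that monotonicity in the difference argument pushes $F^h$ in the correct direction — together with the fact that properness is exactly what promotes the weak discrete comparison (the non-strict version available for merely monotone schemes) into the strict inequality that produces the contradiction.
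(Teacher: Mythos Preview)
Your argument is correct and is exactly the standard proof of this result; the paper does not give its own proof but simply cites Oberman's Theorem~5, whose proof is essentially the one you wrote. The only minor remark is that you read ``proper'' as \emph{strictly} increasing in the second argument, which is indeed what is needed here and what the cited reference assumes.
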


Another property that has recently proved important in establishing convergence of some numerical methods for the \MA equation is the concept of underestimation~\cite{BenamouDuval,HamfeldtBVP2,LindseyRubinstein}.  This concept will be important for our efforts to extend our convergence framework to the non-smooth setting.

\begin{definition}[Underestimation]\label{def:underest}
The scheme~\eqref{eq:approx1} \emph{underestimates} the PDE~\eqref{eq:PDE} if
\[  F^h(x,u(x),u(x)-u(\cdot)) \leq 0\]
for every (possibly non-smooth) solution $u$ of~\eqref{eq:PDE}.
\end{definition}

One of the biggest challenges in setting up finite difference schemes for fully nonlinear elliptic PDE is satisfying the monotonicity property. Even for some linear elliptic equations, it is not possible to build a consistent, monotone scheme on a finite stencil~\cite{Kocan}. To resolve this issue, wide-stencil schemes have been introduced for a range of fully nonlinear elliptic PDE.  To achieve both consistency and monotonicity, these schemes require the width of finite difference stencils to become unbounded as the grid is refined. A variety of monotone schemes now exist for the \MA equation~\cite{benamou2014monotone,BenamouDuval,FinlayOberman,FO_MATheory,ObermanEigenvalues}, including schemes that can be posed on very general grids~\cite{FroeseMeshfreeEigs,HS_Quadtree,Nochetto_MAConverge}.  With some modification, these methods can be adapted to fit within the convergence framework developed in this article.

\section{PDE on the Sphere}\label{sec:PDE}
We begin by introducing an appropriate characterization of the PDE~\eqref{eq:PDE1}-\eqref{eq:cconvex} on the sphere, which will show how the numerical computations can be performed in local tangent planes.  We also introduce a modification of the PDE that will allow us to build $c$-convexity and additional Lipschitz stability into our numerical framework.

\subsection{Interpretation of the PDE}
Solving the problem~\eqref{eq:mapConditionSphere}-\eqref{eq:meanZero} requires us to interpret averaging operator, gradient, Hessian, and $c$-consistency constraint on the sphere.

The averaging operator is given in the typical way by
\bq\label{eq:average} \langle u \rangle \equiv \frac{\int_{\Sf} u dV}{\int_{\Sf} dV}. \eq

With both cost functions, the gradient (an object in the tangent plane) appears in the mapping $T$. Letting $g$ be the standard round metric on the sphere, then the gradient is given by $\nabla u(x) = g^{ij} \partial_iu \partial_j$, where $\partial_j \in \Tf_x$ and $g^{ij}$ is the inverse of the round metric tensor expressed in local coordinates. The mapping $T$ then can be computed directly by solving~\eqref{eq:mapConditionSphere}.

For the squared geodesic cost, the optimal mapping $T(x,p)$ has a very simple expression in terms of the exponential map. Given a tangent vector $p$ (which, in particular, would include the gradient defined above) the exponential map is defined as
\begin{equation}\label{eq:expMap}
\text{exp}_x(p) = \gamma_{x, p}(\norm{ p }).
\end{equation}
Here $\gamma_{x, p}(t)$ denotes the point a distance $t$ (parametrized by arclength) along the geodesic beginning from $x \in \Sf$ and oriented in the direction $p$. 
Then the optimal map corresponding to the squared geodesic cost is given by:
\[ T(\nabla u(x)) = \text{exp}_x(\nabla u(x)). \]

As in~\cite{McRae_OTonSphere}, this map can be found explicitly as
\bq\label{eq:mapSphere}
T(x,p) = \cos\left(\frac{\norm{ p }}{2}\right) x + \sin\left(\frac{\norm{ p }}{2}\right)\frac{p}{ \norm{ p }}. \vspace*{-4pt}
\eq

We derive a similar explicit form of the optimal map corresponding to the log cost (see Appendix~\ref{app:logCost}):
\bq\label{eq:mapLog}
T(x,p) = x\frac{\norm{ p }^2-1/4}{\norm{ p }^2+1/4}-\frac{p}{\norm{ p }^2+1/4}.
\eq

The explicit formulas for the mapping $T$ for both costs demonstrates that they are continuous functions of the gradient. Thus, a smooth gradient $\nabla u(x)$ leads to a smooth mapping $T$, which simplifies the task of obtaining consistent approximations of the mapping.

Computing derivatives of order $n \geq 2$ in the tangent plane introduces some local distortion due to the choice of coordinate system.  The Hessian on manifolds usually includes an additional first-order term that is non-zero if the Christoffel symbols are non-zero. In our approach in this article, we will be interested in a choice of local coordinates (geodesic normal coordinates) that cause the Christoffel symbols to vanish.  This, in turn, will allow us to compute the spherical Hessian as a ``flat'' Hessian on the local tangent plane.


The condition that a solution $u$ must be $c$-convex~\eqref{eq:cconvex} means that $u$ can be characterized as the $c$-transform of some function $\psi$. For symmetric cost functions, we say that the function $u$ is $c$-convex if there exists a function $\psi$ such that
\begin{equation}\label{eq:ctransform}
u(x) = \sup_{y \in \Sf} \{ -c(x,y) - \psi(y) \} \equiv \psi^c(x).
\end{equation}
For $u$ and $T(x,p)$ smooth and $c$-convex, this condition implies that
\begin{equation}\label{eq:cConvexSmooth}
D^2 u(x) + D^2_{xx} c(x,T(x,\nabla u(x))) \geq 0
\end{equation}
where the inequality here means that the matrix is positive semidefinite.  We remark that the PDE~\eqref{eq:OTPDE} is elliptic only on the space of functions satisfying this constraint.  That is,
\[ \Ef(F) = \{u\in C^2(\Sf) \mid D^2 u(x) + D^2_{xx} c(x,T(x,\nabla u(x))) \geq 0
.\} \] 
%

\subsection{Tangent plane characterization}
In order to actually approximate the PDE~\eqref{eq:OTPDE} at a point $x_0\in\Sf$, we wish to define a set of local coordinates $v_{x_0}(x)$ that will map points on the sphere to points on the tangent plane $\Tf_{x_{0}}$.  This would then allow us to draw from the discretization schemes that are already available for approximating fully nonlinear elliptic PDE in $\R^2$.

We mention that the determinant of the Hessian, and the magnitude and direction of the gradient, are coordinate-invariant quantities.  Our particular choice of normal coordinates is motivated primarily by the desire for computational ease.
We reemphasize that the computational challenge here is that local coordinates can distort the Hessian and require the introduction of an additional first-order term.  To avoid the need to modify the PDE, we choose to work with geodesic normal coordinates.  These retain sufficient local structure of the manifold to cause the Christoffel symbols to vanish, which in turn causes the first-order correction term to vanish.

In particular, this choice of normal coordinates preserves distances from the reference point $x_0$.  That is, if $x\in\Sf$ and $v_{x_0}(x)\in\Tf_{x_{0}}$  are sufficiently close to $x_0$, then
\[ \norm{ x_0-v_{x_0}(x) } = d_{\Sf}(x_0,x). \]
These coordinates also preserve orientation so that the projection of $x-x_0$ into the tangent plane is parallel to $v_{x_0}(x)-x_0$.  On the sphere it is possible to construct such coordinates for neighborhoods of uniform size and, in addition, the mapping $v_{x_0}$ is invertible and differentiable. We compute the following explicit representation in Appendix~\ref{app:normalCoords}:
\bq\label{eq:normalCoords}
v_{x_0}(x) = x_0\left(1-d_{\Sf}(x_0,x)\cot d_{\Sf}(x_0,x)\right) + x \left(d_{\Sf}(x_0,x)\csc d_{\Sf}(x_0,x)\right).
\eq

For each point $x_0\in \Sf$ we can now define a function $\tilde{u}_{x_0}(z)$ on the relevant tangent plane $\Tf_{x_{0}}$ in a neighbourhood of $x_0$ by
\bq\label{eq:tangentFunction}
\tilde{u}_{x_0}(z) = u(v_{x_0}^{-1}(z)).
\eq
This choice of coordinates allows us to express the PDE~\eqref{eq:OTPDE} at the point $x_0\in\Sf$ as a generalized \MA equation
\bq\label{eq:MATangent}
F(x_0,\nabla\tilde{u}(x_0),D^2\tilde{u}(x_0)) \equiv-\det(D^2\tilde{u}(x_0)+A(x_0,\nabla\tilde u(x_0))) + H(x_0,\nabla\tilde u(x_0)) = 0, 
\eq
which is now conveniently posed locally on two-dimensional planes. Thus the problem of approximating the PDE at $x_0$ reduces to the problem of constructing an approximation to the two-dimensional generalized \MA equation~\eqref{eq:MATangent} at $x_0$, posed on the tangent plane containing the points $v_{x_0}(x)$.

We emphasize again that the gradient and Hessian of $\tilde{u}$ on the tangent plane at $x_0$ are equivalent to the surface gradient and Hessian on the original function $u$ on the sphere at $x_0$~\cite[Lemma~4.8 and Proposition~5.11]{LeeManifolds}.  Thus using these local coordinates indeed allows us to interpret our PDE, without modification, on the tangent plane.
\begin{lemma}
Let $u \in C^2(\Sf)$ and $x_0\in\Sf$, with $\tilde{u}:\Tf_{x_{0}}\in\R$ defined in geodesic normal coordinates via~\eqref{eq:tangentFunction}.  Then the PDE operator~\eqref{eq:OTPDE} applied to $u$ at the point $x_0$ is equivalent to the generalized \MA operator~\eqref{eq:MATangent} applied to $\tilde{u}$ at the point $x_0$:
\[ F(x_0,\nabla_{\Sf} u(x_0), D^2_{\Sf} u(x_0)) = F(x_0,\nabla\tilde{u}(x_0),D^2\tilde{u}(x_0)). \]
\end{lemma}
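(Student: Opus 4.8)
The plan is to verify that each of the three ingredients appearing in the operator $F$—namely the gradient $\nabla_{\Sf} u(x_0)$, the Hessian $D^2_{\Sf} u(x_0)$, and the composite coefficient functions $A(x_0,\cdot)$ and $H(x_0,\cdot)$ (which depend on $u$ only through the gradient)—is reproduced exactly by the corresponding flat object for $\tilde u$ in geodesic normal coordinates. First I would recall the defining properties of the normal coordinate map $v_{x_0}$ established in the excerpt: it preserves distances from $x_0$ and preserves orientation, so that its differential at $x_0$ is a linear isometry from $\Tf_{x_0}$ onto itself, and—crucially—the Christoffel symbols of the round metric vanish at $x_0$ in these coordinates. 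This last fact is the entire point of choosing normal coordinates and is exactly the cited content of \cite[Lemma~4.8 and Proposition~5.11]{LeeManifolds}.

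Next I would carry out the identification term by term. For the gradient: since $\tilde u_{x_0}(z) = u(v_{x_0}^{-1}(z))$, the chain rule gives $\nabla \tilde u(x_0) = (Dv_{x_0}^{-1}(x_0))^T \nabla u(x_0)$ in the Euclidean sense, while the surface gradient $\nabla_{\Sf} u(x_0) = g^{ij}\partial_i u\,\partial_j$; because $Dv_{x_0}$ at $x_0$ is the identity on $\Tf_{x_0}$ (orientation- and distance-preserving) and the metric in normal coordinates is the Euclidean metric to first order at $x_0$, the two coincide as tangent vectors at $x_0$. For the Hessian: the covariant Hessian is $D^2_{\Sf} u = (\partial_i\partial_j u - \Gamma_{ij}^k \partial_k u)\,dx^i\otimes dx^j$, and since $\Gamma_{ij}^k(x_0)=0$ in normal coordinates the correction term drops out, leaving precisely the flat Hessian $D^2\tilde u(x_0)$ of the coordinate representation. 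For the coefficients $A$ and $H$: these are built from $D^2_{\Sf,xx}c$ and $D^2_{\Sf,xy}c$ evaluated at $(x_0, T(x_0,p))$, together with $f_1, f_2$; the same vanishing-Christoffel argument shows the covariant $x$-derivatives of $c$ at $x_0$ agree with flat derivatives in the chosen coordinates, and the $y$-slot derivatives are intrinsic to the target, so $A(x_0,p)$ and $H(x_0,p)$ are literally the same matrix and scalar whether we view them on the sphere or on $\Tf_{x_0}$—here we also use that $p = \nabla\tilde u(x_0) = \nabla_{\Sf}u(x_0)$ by the gradient step, so the argument fed into $A$ and $H$ is unchanged. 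Assembling these, $-\det(D^2_{\Sf}u(x_0) + A(x_0,\nabla_{\Sf}u(x_0))) + H(x_0,\nabla_{\Sf}u(x_0))$ equals $-\det(D^2\tilde u(x_0) + A(x_0,\nabla\tilde u(x_0))) + H(x_0,\nabla\tilde u(x_0))$, which is the claimed identity.

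The main obstacle—really the only subtle point—is justifying that the determinant of the Hessian is a genuinely coordinate-invariant object and that the normal-coordinate representation introduces no distortion factor. One must be slightly careful: under a general change of coordinates the entries of the Hessian matrix transform, and $\det$ is not invariant unless the Jacobian has determinant $\pm1$; the resolution is precisely that $Dv_{x_0}(x_0)$ is an orthogonal map (distance- and orientation-preserving), so $|\det Dv_{x_0}(x_0)| = 1$, and combined with the vanishing of the Christoffel symbols this makes the flat Hessian of $\tilde u$ at $x_0$ coincide with the covariant Hessian of $u$ at $x_0$ as a symmetric bilinear form in an orthonormal basis. I would state this cleanly, cite \cite{LeeManifolds} for the normal-coordinate facts, and note that the same invariance reasoning applies to $A$ and $H$ so that the full operator $F$ transfers without modification.
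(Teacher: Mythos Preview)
Your proposal is correct and follows essentially the same reasoning as the paper: the paper does not give a formal proof at all, but simply states in the preceding paragraph that the gradient and Hessian of $\tilde u$ on the tangent plane at $x_0$ coincide with the surface gradient and Hessian of $u$ on the sphere, citing \cite[Lemma~4.8 and Proposition~5.11]{LeeManifolds}, and then records the lemma as the immediate consequence. Your write-up is thus a fleshed-out version of the paper's one-sentence justification, organized around the same key fact (vanishing Christoffel symbols in normal coordinates) and the same citation.
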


\subsection{Constraints}\label{sec:gradConstraints}
We now turn our attention to the problem of incorporating constraints into the PDE.  We recall that the PDE operator~\eqref{eq:OTPDE} is elliptic only on the space of functions satisfying the constraint~\eqref{eq:cconvex}.  Consequently, this constraint is necessary for the equation to be well-posed.  We propose instead to produce a globally elliptic extension of~\eqref{eq:OTPDE} that does not require additional constraints.  To do so, we introduce a modified determinant operator satisfying
\begin{equation}\label{eq:detPlus}
\text{det}^{+}(M) = 
\begin{cases}
\text{det}(M), \ \ \ M \geq 0 \\
<0, \ \ \ \text{otherwise}.
\end{cases}
\end{equation}
Then we can absorb the constraint into the PDE~\eqref{eq:MATangent} through the modification
\bq\label{eq:MAPlus}
F^+(x,\nabla u(x),D^2u(x)) \equiv-{\det}^+(D^2{u}(x)+A(x,\nabla u(x))) + H(x,\nabla u(x)) = 0.
\eq
Since the function $H > 0$, (sub)solutions of this will automatically satisfy the condition
\[ D^2{u}(x)+A(x,\nabla u(x)) \geq 0. \]

The solution $u$ of~\eqref{eq:mapConditionSphere}-\eqref{eq:meanZero} is also known to satisfy \emph{a priori} bounds on its gradient,
\bq\label{eq:gradBound} \norm{ \nabla u } \leq R \eq
for any $R>\pi$ in the case of the squared geodesic cost and $R>C$ in the case of the logarithmic cost.  Here $C$ is the bound on $\nabla u$ determined in~\cite[Proposition~6.1]{Loeper_OTonSphere}. 

With the goal of constructing Lipschitz stable approximation schemes, we state a modification of the PDE that explicitly includes these constraints on the gradient. 
\begin{equation}\label{eq:modifiedPDE}
G(x,\nabla u(x),D^2u(x)) \equiv \max \left\{ F^{+}(x,\nabla u(x),D^2u(x)), \norm{ \nabla u(x) } - R \right\} = 0.
\end{equation}
We again emphasize that this new PDE is elliptic on all $C^2$ functions ($\Ef(G) = C^2(\Sf)$), and does not require any additional constraints.  Moreover, as we demonstrate below, the $c$-convex solution of~\eqref{eq:OTPDE} is indeed a solution of this modified equation.

\begin{remark}
Under the assumption that the globally elliptic equation~\eqref{eq:modifiedPDE} has a unique solution, it must automatically coincide with the $c$-convex solution of the original equation.
Comparison principles and uniqueness results for many fully nonlinear elliptic PDEs of this form are available~\cite{CIL}.  However, these calculations are highly technical and need to be specifically adapted to the PDE at hand.  This is beyond the scope of the present article. 
\end{remark}

It is not \emph{a priori} obvious that solutions of this new PDE operator will automatically satisfy the original PDE.  Indeed, because of the action of the maximum operator, they need only be subsolutions.
To establish the plausibility of this new operator, we establish that the equivalence of these two equations for smooth, $c$-convex functions.

\begin{theorem}[Equivalence of PDE (smooth case)]\label{thm:equivalenceSmooth}
Under the conditions of Hypothesis~\ref{hyp:Smooth}, a $c$-convex function $u\in C^2$ is a solution of~\eqref{eq:OTPDE} if and only if it is a solution of~\eqref{eq:modifiedPDE}.
\end{theorem}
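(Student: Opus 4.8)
The plan is to prove the two implications separately, both resting on the geometric reading of the generalized \MA operator. For $u\in C^2$ write $T_u(x) \equiv T(x,\nabla u(x))$ for the associated map, and recall (see Appendix~\ref{app:regularity} and~\cite{Loeper_OTonSphere}) that the quantity $\det(D^2 u(x)+A(x,\nabla u(x)))/\abs{\det D^2_{xy}c(x,T_u(x))}$ is the Jacobian of $T_u$ at $x$, so that the equation $F(x,\nabla u,D^2u)=0$ of~\eqref{eq:OTPDE}, together with the $c$-convexity constraint~\eqref{eq:cconvex}, is exactly the transport statement $(T_u)_\# f_1 = f_2$. The second standing observation is that, by~\eqref{eq:cConvexSmooth}, a $c$-convex $C^2$ function automatically satisfies $D^2 u + A \ge 0$, hence ${\det}^+(D^2 u+A) = \det(D^2 u+A)\ge 0$ and therefore $F^+(x,\nabla u,D^2u) = F(x,\nabla u,D^2u)$ at every point; this is what makes the maximum operator in~\eqref{eq:modifiedPDE} interact cleanly with the original equation.

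For the forward direction, assume $u$ is $c$-convex and solves~\eqref{eq:OTPDE}, i.e.\ $\det(D^2 u+A)=H$. The observation above immediately gives $F^+(x,\nabla u,D^2u)=F(x,\nabla u,D^2u)=0$, so it only remains to check that the gradient term in~\eqref{eq:modifiedPDE} is inactive. Since $u$ is $c$-convex and (by the first paragraph) its map $T_u$ pushes $f_1$ forward to $f_2$, the map $T_u$ is $c$-cyclically monotone and hence $c$-optimal; by uniqueness of optimal maps for these costs it agrees a.e.\ with the optimal map, so $u$ differs only by an additive constant from the solution of~\eqref{eq:mapConditionSphere}--\eqref{eq:meanZero} and in particular satisfies the a priori gradient bound~\eqref{eq:gradBound}. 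Because $R$ was chosen strictly larger than that bound, $\norm{\nabla u(x)} - R < 0$ for all $x$, so $G(x,\nabla u,D^2u) = \max\{0,\ \norm{\nabla u(x)} - R\} = 0$ and $u$ solves~\eqref{eq:modifiedPDE}.

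For the reverse direction, assume $u$ is $c$-convex and solves~\eqref{eq:modifiedPDE}. Then $F^+(x,\nabla u,D^2u)\le 0$ everywhere, and by $c$-convexity this says $\det(D^2 u+A)\ge H$, equivalently $f_2(T_u(x))\,\abs{\det D_x T_u(x)} \ge f_1(x)$ on $\Sf$. I would then invoke two standard properties of $c$-convex functions on the closed manifold $\Sf$: (i) $T_u$ is surjective --- for any $y_0\in\Sf$ the continuous function $x\mapsto c(x,y_0)+u(x)$ attains its minimum at some $x_0$ which, since $\norm{\nabla u}$ is bounded, cannot lie on the singular set of $c(\cdot,y_0)$ and is therefore an interior critical point, giving $\nabla_{\Sf,x}c(x_0,y_0)=-\nabla u(x_0)$ and hence $T_u(x_0)=y_0$ by~\eqref{eq:mapConditionSphere}; and (ii) $T_u$ is injective outside a null set, because its fibers are contained in the $c$-subdifferential of the dual potential $u^c$ (cf.~\eqref{eq:ctransform}), which is single-valued wherever $u^c$ is differentiable, hence a.e. Integrating the pointwise inequality over $\Sf$ and applying the area formula with (i)--(ii),
\[
\int_{\Sf} f_1(x)\,dV(x) \;\le\; \int_{\Sf} f_2(T_u(x))\,\abs{\det D_x T_u(x)}\,dV(x) \;=\; \int_{\Sf} f_2(y)\,dV(y),
\]
and the mass balance condition (Hypothesis~\ref{hyp:Smooth}(b)) forces the two ends to coincide. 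Hence the pointwise inequality is an equality a.e., i.e.\ $\det(D^2 u+A)=H$ a.e.; since $u\in C^2$ and $A,H$ are continuous, the identity holds everywhere, so $F(x,\nabla u,D^2u)=0$ and $u$ solves~\eqref{eq:OTPDE}.

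The main obstacle is the measure-theoretic step in the reverse direction: one has to justify the Jacobian/area-formula change of variables for the possibly degenerate map $T_u$ and to verify the surjectivity and a.e.\ injectivity claims, all of which rely on the theory of $c$-convex functions for the specific costs at hand (and on checking that $u\in C^2$ supplies enough regularity for the relevant dual potential). Once those ingredients are secured the rest is bookkeeping; by comparison the forward direction is routine, its only nontrivial input being the a priori gradient estimate of Theorem~\ref{thm:regularity} guaranteeing that the artificial constraint $\norm{\nabla u}\le R$ never binds along a genuine solution.
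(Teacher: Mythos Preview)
Your proposal is correct and follows essentially the same route as the paper: $c$-convexity forces $F^+=F$, and the reverse implication comes from integrating the subsolution inequality $f_1\le f_2(T_u)\det DT_u$ against mass balance, which the paper isolates as Lemmas~\ref{lem:transportSmooth}--\ref{lem:subSmooth}. The one minor difference is that the paper avoids your surjectivity argument by using only the trivial inclusion $T_u(\Sf)\subset\Sf$ to bound $\int_{T_u(\Sf)}f_2\le\int_{\Sf}f_2=\int_{\Sf}f_1$; on the other hand, your explicit area-formula discussion actually supplies the a.e.-injectivity justification that the paper's change-of-variables step in Lemma~\ref{lem:subSmooth} leaves implicit.
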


Before completing the proof, we establish a few lemmas relating to the transportation of mass by subsolutions.  The following proofs will make use of an abbreviated notation for the transport map:
\[ T_u(x) = T(x,\nabla u(x)). \]

%
\begin{lemma}\label{lem:cConvexSmooth}
If  $u\in C^2$ is $c$-convex then it satisfies the constraint~\eqref{eq:cConvexSmooth}: $D^2u(x)+D^2_{xx}c(x,T_u(x)) \geq 0$.
\end{lemma}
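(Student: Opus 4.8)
The plan is to derive the Hessian constraint directly from the variational characterization of $c$-convexity in~\eqref{eq:ctransform}, exploiting the smoothness of $u$ and of the cost functions under Hypothesis~\ref{hyp:Smooth}. First I would write $u = \psi^c$, so that for every $x \in \Sf$ there is a point $y^*(x) \in \Sf$ attaining the supremum, i.e.\ $u(x) = -c(x, y^*(x)) - \psi(y^*(x))$, while for all other $y$ the inequality $u(x) \geq -c(x,y) - \psi(y)$ holds. Fixing $x_0$ and its optimal target $y_0 = y^*(x_0)$, the function $x \mapsto u(x) + c(x, y_0) + \psi(y_0)$ then has a global minimum (value zero) at $x = x_0$. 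Since $u \in C^2$ and $c(\cdot, y_0)$ is smooth away from the cut locus (which is where $y_0$ sits relative to $x_0$ only in degenerate configurations excluded by the a priori gradient bounds), this function is $C^2$ near $x_0$, and the second-order condition for an interior minimum gives $D^2 u(x_0) + D^2_{xx} c(x_0, y_0) \geq 0$ as positive semidefinite matrices.

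The remaining point is to identify $y_0$ with $T_u(x_0) = T(x_0, \nabla u(x_0))$. This follows from the first-order optimality condition: vanishing of the gradient of $x \mapsto u(x) + c(x,y_0)$ at $x_0$ yields $\nabla_{\Sf} u(x_0) = -\nabla_{\Sf, x} c(x_0, y_0)$, which is exactly the defining relation~\eqref{eq:mapConditionSphere} for the optimal map, so $y_0 = T(x_0, \nabla u(x_0)) = T_u(x_0)$. Substituting this identification into the second-order inequality gives precisely~\eqref{eq:cConvexSmooth}. I would carry out the argument in local geodesic normal coordinates at $x_0$ (as set up in Section~\ref{sec:PDE}), where the spherical Hessian coincides with the ordinary Hessian, so that the "second derivative at an interior minimum is PSD" step is the elementary Euclidean fact.

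The main obstacle I anticipate is the regularity of $c(\cdot, y_0)$ at the point $x_0$: the squared geodesic cost fails to be smooth when $x$ and $y$ are antipodal, and $-\log\|x-y\|$ blows up when $x = y$. One must therefore rule out that the optimal target $y_0$ lies at (or near) the cut locus of $x_0$. This is handled by the a priori gradient bound~\eqref{eq:gradBound}: since $\nabla_{\Sf} u(x_0) = -\nabla_{\Sf,x} c(x_0, y_0)$ and $\|\nabla u\| \leq R$ with $R$ strictly below the relevant threshold ($\pi$ for the geodesic cost, and the Loeper bound $C$ for the log cost), the geodesic distance $d_{\Sf}(x_0, y_0)$ stays bounded away from $\pi$ and from $0$, so $c(\cdot, y_0)$ is smooth in a fixed neighborhood of $x_0$ and the Hessian computation is justified. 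A secondary, purely bookkeeping, point is to confirm that the supremum in~\eqref{eq:ctransform} is actually attained (compactness of $\Sf$ and upper semicontinuity of $y \mapsto -c(x,y) - \psi(y)$, using that $\psi$, being a $c$-transform of the bounded function $u$, is itself continuous), which I would dispatch in a single sentence.
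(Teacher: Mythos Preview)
Your proposal is correct and follows essentially the same route as the paper: both extract~\eqref{eq:cConvexSmooth} as the second-order optimality condition coming from the supremum in the definition of $c$-convexity. The only difference is cosmetic---the paper works on the dual side, fixing $y = T_u(x_0)$ and observing that the supremum $u^c(y) = \sup_x\{-c(x,y)-u(x)\}$ is attained at $x = x_0$, whereas you work with $u = \psi^c$, locate the maximizer $y_0$, and then identify $y_0 = T_u(x_0)$ via the first-order condition (a step the paper leaves implicit); the paper also omits your cut-locus discussion entirely, and note that in the paper's convention $R > \pi$, not $R < \pi$, so that part of your obstacle analysis would need rephrasing.
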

\begin{proof}
If $u$ is $c$-convex, then for every $x_0\in\Sf$ we can fix $y = T_u(x_0)$ and find that the supremum in
\[ u^c(y) = \sup\limits_{x\in\Sf}\{-c(x,y)-u(x)\} \]
is attained at $x_0$.  The optimality condition for this is precisely~\eqref{eq:cConvexSmooth}.
\end{proof}

\begin{lemma}\label{lem:transportSmooth}
Under the conditions of Hypothesis~\ref{hyp:Smooth}, let $u\in C^2$ be a subsolution of~\eqref{eq:OTPDE}.   Then
\[ \int_{T_u(\Sf)}f_2(y)\,dy \leq \int_{\Sf} f_1(x)\,dx. \]
\end{lemma}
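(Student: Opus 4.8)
The plan is to read this inequality off from two structural facts — that the optimal map always takes values on the sphere, and that the data is mass balanced — rather than from the subsolution property itself. In that sense it is the \emph{easy} half of a pair of mass comparisons: the opposite inequality is the one that genuinely uses the \MA structure, and the two together will later combine into an exact mass identity.

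I would argue in three short steps. \emph{(i) The image lies on $\Sf$.} The second line of the defining system~\eqref{eq:mapConditionSphere} requires $T(x,p)\in\Sf$ for all $x\in\Sf$ and $p\in\Tf_x$ (this is also visible directly from the closed forms~\eqref{eq:mapSphere} and~\eqref{eq:mapLog}); taking $p=\nabla u(x)$ gives $T_u(x)\in\Sf$ for every $x$, so $T_u(\Sf)\subseteq\Sf$. \emph{(ii) The image is measurable.} Since $u\in C^2(\Sf)$ the gradient $\nabla u$ is continuous, and $T(x,p)$ is continuous in $(x,p)$ — the formulas~\eqref{eq:mapSphere} and~\eqref{eq:mapLog} extend continuously through $p=0$ — so $T_u$ is continuous and $T_u(\Sf)$ is a compact, in particular Borel, subset of $\Sf$. \emph{(iii) Compare.} Since $f_2\ge 0$ on $\Sf$ and $T_u(\Sf)\subseteq\Sf$, monotonicity of the integral gives
\[ \int_{T_u(\Sf)} f_2(y)\,dy \;\le\; \int_{\Sf} f_2(y)\,dy \;=\; \int_{\Sf} f_1(x)\,dx, \]
where the equality is the mass balance condition of Hypothesis~\ref{hyp:Smooth}(b). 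This is the asserted bound.

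I do not anticipate a real obstacle: the only point requiring a word is the measurability of $T_u(\Sf)$, which is immediate from continuity of $T_u$, and in fact the subsolution hypothesis is not used in this direction at all. It is used, rather, in the companion estimate $\int_{\Sf} f_1 \le \int_{T_u(\Sf)} f_2$, which for a $c$-convex subsolution follows from the change of variables $y=T_u(x)$ — using $\abs{\det DT_u(x)} = \det\bigl(D^2 u(x)+A(x,\nabla u(x))\bigr)/\abs{\det D^2_{\Sf,xy}c(x,T_u(x))}$ and the (almost everywhere) injectivity of $T_u$ — combined with the pointwise subsolution inequality $\det(D^2 u + A)\ge H$ for~\eqref{eq:OTPDE}. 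Together with the present lemma this forces $\int_{T_u(\Sf)} f_2 = \int_{\Sf} f_1$, and then, the integrand inequality being tight, $\det(D^2 u + A)=H$ almost everywhere, which is the mechanism behind the equivalence in Theorem~\ref{thm:equivalenceSmooth}.
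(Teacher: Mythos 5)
Your proof is correct and is essentially identical to the paper's: both deduce $T_u(\Sf)\subseteq\Sf$ from the definition of the transport map and then combine nonnegativity of $f_2$ with the mass balance condition. Your added remarks on measurability and on the unused subsolution hypothesis are accurate but not needed beyond what the paper records.
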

\begin{proof}
By design, the transport maps~\eqref{eq:mapSphere}-\eqref{eq:mapLog} satisfy $T_u(\Sf) \subset \Sf$.  Because of mass balance we conclude that
\[ \int_{\Sf} f_1(x)\,dx = \int_{\Sf} f_2(y)\,dy \geq \int_{T_u(\Sf)} f_2(y)\,dy. \qedhere \]
\end{proof}

The preceding lemma will be used to derive a contradiction that shows smooth subsolutions of~\eqref{eq:OTPDE} are, in fact, solutions.

\begin{lemma}\label{lem:subSmooth}
Under the conditions of Hypothesis~\ref{hyp:Smooth}, let $u\in C^2(\Sf)$ be a subsolution of~\eqref{eq:OTPDE}.  Then $u$ is a solution of~\eqref{eq:OTPDE}.
\end{lemma}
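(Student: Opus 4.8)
The plan is to argue by contradiction, exploiting the strict positivity of $H$ together with the mass-transport inequality of Lemma~\ref{lem:transportSmooth}. Suppose $u\in C^2(\Sf)$ is a subsolution of~\eqref{eq:OTPDE} but not a solution. Since $u$ is a subsolution, $F^+(x,\nabla u(x),D^2u(x))\leq 0$ everywhere, which by the definition~\eqref{eq:detPlus} of ${\det}^+$ forces $D^2u(x)+A(x,\nabla u(x))\geq 0$ at every $x\in\Sf$; thus on the whole sphere ${\det}^+$ agrees with the ordinary determinant and we have the pointwise inequality
\[
\det\bigl(D^2u(x)+A(x,\nabla u(x))\bigr)\;\geq\; H(x,\nabla u(x))\;>\;0,
\]
with strict inequality on a set of positive measure (otherwise $u$ would be a solution). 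The standard change-of-variables / Jacobian identity for the optimal-map construction says that $T_u$ pushes the measure $f_1\,dS$ forward onto a measure that is $\geq f_2\,dS$ on $T_u(\Sf)$ — more precisely, for any Borel set $E$, $\int_{T_u^{-1}(E)} \det(D^2u+A)\, \cdot$ relates the two densities through $H = |\det D^2_{xy}c|\,f_1/f_2\circ T_u$.

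Concretely, I would integrate. Using the substitution $y=T_u(x)$ — valid because $D^2u+A\geq 0$ makes $T_u$ a (weakly) monotone, a.e.-differentiable map with Jacobian $\det(D^2 u(x)+A(x,\nabla u(x)))/|\det D^2_{xy}c(x,T_u(x))|$ (this is exactly the Monge-Amp\`ere structure of~\eqref{eq:OTPDE}) — we obtain
\[
\int_{T_u(\Sf)} f_2(y)\,dy \;\leq\; \int_{\Sf} f_2(T_u(x))\,\frac{\det\bigl(D^2u(x)+A(x,\nabla u(x))\bigr)}{\abs{\det D^2_{xy}c(x,T_u(x))}}\,dx \;=\;\int_{\Sf}\frac{f_2(T_u(x))}{\abs{\det D^2_{xy}c}}\det(D^2u+A)\,dx,
\]
and the subsolution inequality $\det(D^2u+A)\geq H = \abs{\det D^2_{xy}c}\,f_1/f_2(T_u)$ turns the integrand into something $\geq f_1(x)$, with strict inequality on a positive-measure set. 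Hence $\int_{T_u(\Sf)} f_2(y)\,dy > \int_{\Sf} f_1(x)\,dx$, contradicting Lemma~\ref{lem:transportSmooth}. Therefore the strict-inequality set has measure zero, and by continuity $\det(D^2u+A)\equiv H$ on all of $\Sf$, i.e.\ $u$ solves~\eqref{eq:OTPDE}.

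The main obstacle I anticipate is justifying the change of variables rigorously when $T_u$ is only known to be injective a.e.\ rather than a diffeomorphism: the inequality direction in the area formula must be handled carefully (overlaps in the image can only decrease $\int_{T_u(\Sf)} f_2$, so the inequality goes the right way), and one must confirm that $\det D^2_{xy}c(x,T_u(x))\neq 0$ so the Jacobian formula is legitimate — this is where Hypothesis~\ref{hyp:Smooth}(c), which fixes the two specific cost functions, together with the explicit maps~\eqref{eq:mapSphere}--\eqref{eq:mapLog}, is used. A cleaner alternative, if the authors prefer, is to invoke the $c$-convexity of $u$ (Lemma~\ref{lem:cConvexSmooth}) to realize $T_u$ as the $c$-subdifferential map of a genuine $c$-convex potential, for which the pushforward/Jacobian machinery is classical; I would present the argument in that language to avoid ad hoc regularity bookkeeping.
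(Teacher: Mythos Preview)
Your strategy coincides with the paper's: assume strict inequality on an open set, convert $\det(D^2u+A)\geq H$ into the Jacobian inequality $f_1\leq f_2(T_u)\det(DT_u)$, integrate, and contradict Lemma~\ref{lem:transportSmooth}. The paper obtains the Jacobian identity cleanly by differentiating the defining relation $\nabla u(x)=-\nabla_x c(x,T_u(x))$ to get $D^2u+D^2_{xx}c=-D^2_{xy}c\cdot DT_u$, hence $\det(D^2u+A)=\abs{\det D^2_{xy}c}\det(DT_u)$; this is more self-contained than your appeal to ``the Monge--Amp\`ere structure.'' A minor slip: the lemma concerns subsolutions of~\eqref{eq:OTPDE}, i.e.\ of $F$, not of $F^+$. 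From $F\leq 0$ you get $\det(D^2u+A)\geq H>0$, but in two dimensions this alone does not force $D^2u+A\geq 0$ (both eigenvalues could be negative). This is harmless, since the remainder of your argument does not actually rely on semi-definiteness.

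There is, however, a genuine logical gap in your integration step. You write $\int_{T_u(\Sf)} f_2 \leq \int_{\Sf} f_2(T_u)\det(DT_u)$ to account for overlaps, and separately $\int_{\Sf} f_2(T_u)\det(DT_u) > \int_{\Sf} f_1$ from the strict subsolution inequality, and then conclude $\int_{T_u(\Sf)} f_2 > \int_{\Sf} f_1$. But from $A\leq I$ and $I>B$ one cannot deduce $A>B$; your parenthetical that overlaps make ``the inequality go the right way'' is exactly backwards---overlaps weaken, not strengthen, the bound you need. The paper writes this step simply as ``Integrating, we obtain $\int_{\Sf} f_1 < \int_{T_u(\Sf)} f_2$,'' treating the change of variables as an equality rather than an inequality. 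Your attempt to be more careful about multiplicity is laudable, but as written it breaks the chain; to repair it you would need to argue that $T_u$ has no overlaps (injectivity) or invoke a degree-type identity on the closed manifold so that the change of variables is exact.
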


\begin{proof}
Suppose $u$ is not a solution.  Since $u\in C^2(\Sf)$, there exists some open set $E\subset\Sf$ such that
\[ F(x,\nabla u(x), D^2u(x)) < 0. \]

We recall that the mapping $T_u$ satisfies the condition~\eqref{eq:mapConditionSphere}:
\[
\nabla u(x) =- \nabla_{x} c(x,T_{u}(x)).
\]
Differentiating yields
\[
D^2 u(x) = -D^2_{xx} c(x,T_{u}(x)) - D^2_{xy}c(x,T_{u}(x))DT_{u}(x).
\]

Since $u$ is a subsolution of~\eqref{eq:OTPDE}, we know that
\begin{align*}
\abs{\det (D^2_{xy}c(x,T_u(x)))}f_1(x)/f_2(T_u(x)) & \leq \det(D^2u(x)+D^2_{xx} c(x,T_{u}(x)))\\
  &= \abs{\det (D^2_{xy}c(x,T_{u}(x)))} \det(DT_u(x)).
\end{align*}
Therefore
\[ f_1(x) \leq \det(DT_u(x))f_2(T_u(x)) \]
with strict inequality on an open set $E\subset\Sf$.

Integrating, we obtain
\[ \int_{\Sf} f_1(x)\,dx < \int_{T_u(\Sf)} f_2(y)\,dy. \]
This contradicts Lemma~\ref{lem:transportSmooth} and thus $u$ is a solution of~\eqref{eq:OTPDE}.
\end{proof}

%
%
%
%
%
%
%

\begin{proof}[Proof of Theorem~\ref{thm:equivalenceSmooth}]
Let $u$ be a c-convex solution of~\eqref{eq:OTPDE}.  Then it satisfies the gradient bound $\norm{ \nabla u } - R \leq 0$ from~\eqref{eq:gradBound}.  Because it is c-convex, it also satisfies the constraint~\eqref{eq:cConvexSmooth} (Lemma~\ref{lem:cConvexSmooth}) so that
\[ F^+(x,\nabla u(x), D^2u(x)) = F(x,\nabla u(x), D^2u(x)) = 0. \]
Then trivially the maximum of these operators also vanishes, and the modified PDE~\eqref{eq:modifiedPDE} is satisfied.

Now we let $u$ be a solution of the modified PDE~\eqref{eq:modifiedPDE} so that
\[ \max\left\{F^+(x,\nabla u(x),D^2u(x)), \norm{ \nabla u(x) } - R\right\} = 0. \]
This implies that $u$ is a subsolution of the convexified PDE operator~\eqref{eq:MAPlus} denoted by $F^+$.  Subsolutions of this equation automatically satisfy the constraint~\eqref{eq:cConvexSmooth} (see the definition of $\text{det}^+$) so that
\[ F(x,\nabla u(x), D^2u(x)) = F^+(x,\nabla u(x), D^2u(x)) \leq 0. \]
From Lemma~\ref{lem:subSmooth}, $u$ is necessarily a solution of~\eqref{eq:OTPDE}.
\end{proof}

We also partially extend this equivalence result to the non-smooth case for the squared geodesic cost.
\begin{theorem}[Equivalence of PDE (non-smooth case)]\label{thm:equivalenceNonsmooth}
Under the conditions of Hypothesis~\ref{hyp:Nonsmooth}, let $u\in C^{0,1}(\Sf)$ be a $c$-convex viscosity solution of~\eqref{eq:OTPDE}.  Then $u$ is a viscosity solution of~\eqref{eq:modifiedPDE}.
\end{theorem}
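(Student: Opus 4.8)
The plan is to verify the viscosity sub- and super-solution inequalities for the modified operator $G$ of~\eqref{eq:modifiedPDE} directly, drawing the needed information from the fact that $u$ is simultaneously a sub- and super-solution of~\eqref{eq:OTPDE} in the sense of Definition~\ref{def:viscosity} and is $c$-convex in the sense of~\eqref{eq:ctransform}. Fixing a continuous representative of the operator family ${\det}^+$ from~\eqref{eq:detPlus} (possible since $\det(M)\to 0$ as $M$ approaches the boundary of the positive semidefinite cone, so one may set ${\det}^+(M)=\lambda_{\min}(M)$ off that cone), the operators $F$, $F^+$ and $G$ become continuous in their arguments, the semicontinuous envelopes in Definition~\ref{def:viscosity} collapse onto the operators themselves, and it suffices to check, at each $x_0\in\Sf$ and for each admissible test function touching $u$ from one side at $x_0$ (normalized so $\phi(x_0)=u(x_0)$), the pointwise inequality on $G$. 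Throughout, gradients and Hessians are taken in the geodesic normal coordinates~\eqref{eq:normalCoords} about $x_0$, so that the surface and flat operators agree.

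\emph{Sub-solution inequality.} Let $\phi\in C^\infty(\Sf)$ with $u-\phi$ having a local maximum at $x_0$. Since $u$ is $c$-convex and the squared geodesic cost is Lipschitz, the supremum in $u(x_0)=\sup_{y}\{-c(x_0,y)-\psi(y)\}$ is attained at some $y_0$, and the $c$-support function $g(x):=-c(x,y_0)-\psi(y_0)$ satisfies $g\le u$ with equality at $x_0$; moreover $y_0$ cannot lie in the cut locus of $x_0$ (otherwise $u\ge g$ has an upward conical singularity at $x_0$ that no smooth $\phi$ can touch from above, making the test vacuous), so $g$ is smooth near $x_0$. From $\phi\ge u\ge g$ with equality at $x_0$ the smooth function $\phi-g$ has a local minimum at $x_0$; reading off first and second order conditions and invoking the defining relation~\eqref{eq:mapConditionSphere} for $T$ gives $\nabla\phi(x_0)=-\nabla_{\Sf,x}c(x_0,y_0)$, hence $T(x_0,\nabla\phi(x_0))=y_0$, $\norm{\nabla\phi(x_0)}=d_{\Sf}(x_0,y_0)<\pi<R$, and $D^2\phi(x_0)+A(x_0,\nabla\phi(x_0))\ge D^2 g(x_0)+A(x_0,\nabla g(x_0))=-D^2_{\Sf,xx}c(x_0,y_0)+D^2_{\Sf,xx}c(x_0,y_0)=0$, i.e.\ $\phi$ is $c$-convex \emph{at} $x_0$. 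Adding $\epsilon\,\tfrac12\abs{v_{x_0}(\cdot)-x_0}^2$ makes the perturbed function strictly $c$-convex at, hence near, $x_0$, and patching smoothly outside a small ball to a globally $c$-convex function produces $\phi_\epsilon\in C^\infty(\Sf)\cap\Ef(F)$ with the same $2$-jet as $\phi$ at $x_0$ for which $u-\phi_\epsilon$ still has a local maximum at $x_0$. The sub-solution property of $u$ for~\eqref{eq:OTPDE} gives $F(x_0,\nabla\phi(x_0),D^2\phi(x_0)+\epsilon I)\le 0$, and $\epsilon\to0$ yields $F(x_0,\nabla\phi(x_0),D^2\phi(x_0))\le 0$; since $D^2\phi(x_0)+A(x_0,\nabla\phi(x_0))\ge0$ this equals $F^+(x_0,\nabla\phi(x_0),D^2\phi(x_0))$, so together with the gradient bound $G(x_0,\nabla\phi(x_0),D^2\phi(x_0))=\max\{F^+,\norm{\nabla\phi(x_0)}-R\}\le0$.

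\emph{Super-solution inequality.} Let $\phi\in C^\infty(\Sf)$ with $u-\phi$ having a local minimum at $x_0$, and set $p_0=\nabla\phi(x_0)$, $M_0=D^2\phi(x_0)$. If $M_0+A(x_0,p_0)$ is not strictly positive definite then ${\det}^+(M_0+A(x_0,p_0))\le0$, hence $F^+(x_0,p_0,M_0)\ge H(x_0,p_0)\ge0$ and a fortiori $G(x_0,p_0,M_0)\ge0$. If instead $M_0+A(x_0,p_0)>0$, subtract $\epsilon\,\tfrac12\abs{v_{x_0}(\cdot)-x_0}^2$ and patch outside a small ball exactly as above to get $\phi_\epsilon\in C^\infty(\Sf)\cap\Ef(F)$ with $u-\phi_\epsilon$ still attaining a local minimum at $x_0$; the super-solution property of $u$ for~\eqref{eq:OTPDE} gives $F(x_0,p_0,M_0-\epsilon I)\ge0$, and $\epsilon\to0$ yields $F(x_0,p_0,M_0)\ge0$, which again coincides with $F^+(x_0,p_0,M_0)$, so $G(x_0,p_0,M_0)\ge0$. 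Hence $u$ is a viscosity solution of~\eqref{eq:modifiedPDE}.

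\emph{Expected main obstacle.} The delicate step in both directions is the globalization of test functions: from a function that is $c$-convex only near $x_0$ one must produce a genuine element of $C^\infty(\Sf)\cap\Ef(F)$ with the same second-order behavior at $x_0$ and the same one-sided contact with $u$ there. The strict positivity of the $c$-Hessian created by the $\epsilon$-perturbation provides the slack for patching smoothly to a globally $c$-convex function, but the patch must be engineered so that membership in $\Ef(F)$ is preserved across the transition annulus, which is where essentially all of the work lies; alternatively one may appeal to the standard equivalence between the global and the localized formulations of viscosity sub/super-solutions, which renders this step automatic. A secondary, milder point is the non-smoothness of the squared geodesic cost on the cut locus, circumvented here because the contact point $y_0=T_u(x_0)$ never lies in the cut locus of $x_0$, by the a priori bound $\norm{\nabla u}\le\pi$ from~\eqref{eq:gradBound}.
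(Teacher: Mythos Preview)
Your approach is essentially the same as the paper's. For the sub-solution inequality, both you and the paper show that any smooth $\phi$ touching $u$ from above at $x_0$ is automatically $c$-convex at $x_0$ (the paper packages this as Lemma~\ref{lem:testFunctions}; you derive it via the $c$-support function $g$) and satisfies the gradient bound (the paper via Lemma~\ref{lem:gradSubs}; you directly from $\|\nabla\phi(x_0)\|=d_{\Sf}(x_0,y_0)$). For the super-solution inequality, both arguments split into the same two cases according to whether $D^2\phi(x_0)+A(x_0,\nabla\phi(x_0))$ is positive semi-definite.

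The one place you go further than the paper is the globalization of test functions. The paper simply asserts that local $c$-convexity at $x_0$ renders $\phi$ ``a valid test function'', tacitly relying on the standard equivalence between the global ($\phi\in\Ef(F)$) and localized formulations of viscosity sub/super-solutions. You correctly identify this as the delicate step, and offer both a direct perturb-and-patch construction and the appeal to that standard equivalence. Either route is acceptable; your treatment is, if anything, more careful than the paper's on this point.
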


\begin{remark}
The key to proving this result is the observation that subsolutions of the modified equation satisfy \emph{a priori} Lipschitz bounds.  This is fairly straightforward for the squared geodesic cost, but more challenging for the logarithmic cost because of the singularity in the cost function.  A possibility for extending this theorem to singular cost functions, which is explored in~\cite{HT_SphereNumerics}, is to use regularity results to study optimal transportation with an alternative (regularized) version of the logarithmic cost function that yields the same solution as the unregularized problem.
\end{remark}

Once again, we begin with a few lemmas.
\begin{lemma}[Local $c$-convexity of test functions]\label{lem:testFunctions}
Let $u\in C^{0,1}(\Sf)$ be $c$-convex with cost function $c(x,y) = \frac{1}{2}d_{\Sf}(x,y)^2$ and $\phi\in C^\infty(\Sf)$.  Suppose that $u-\phi$ has a local maximum at $x_0$.  Then
\[ D^2\phi(x_0) + D^2_{xx} c(x_0,T_\phi(x_0)) \geq 0. \]
\end{lemma}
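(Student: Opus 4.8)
The plan is to use the $c$-convexity of $u$ together with the fact that $u - \phi$ has a local max at $x_0$ to transfer the second-order structure of $u$ onto $\phi$. Since $u$ is $c$-convex, there is a function $\psi$ with $u = \psi^c$, i.e. $u(x) = \sup_{y\in\Sf}\{-c(x,y) - \psi(y)\}$. Set $y_0 = T_\phi(x_0) = T(x_0, \nabla\phi(x_0))$. The first step is to show that the supremum defining $u(x_0)$ is attained (or approximately attained in a way that survives the argument) at $y_0$. This should follow from the defining relation~\eqref{eq:mapConditionSphere}: at a local max of $u - \phi$ we have $\nabla_{\Sf} u(x_0) = \nabla_{\Sf}\phi(x_0)$ (in the viscosity/super-differential sense, since $\phi$ touches $u$ from above), and $\nabla_{\Sf,x} c(x_0, y_0) = -\nabla\phi(x_0) = -\nabla u(x_0)$, which is exactly the first-order optimality condition identifying $y_0$ as the maximizer in $u(x_0) = \sup_y\{-c(x_0,y)-\psi(y)\}$. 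Here I would lean on the Lipschitz regularity of $u$ and the smoothness of $c$ (squared geodesic cost away from the cut locus) to guarantee the maximizer exists and $\psi(y_0)$ is the relevant value.

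The second step is the key inequality. Define the smooth function $w(x) = -c(x, y_0) - \psi(y_0)$. By the $c$-convexity of $u$, we have $u(x) \geq w(x)$ for all $x$ near $x_0$, with equality at $x_0$. Combining with $u(x) - \phi(x) \leq u(x_0) - \phi(x_0)$ near $x_0$, I get
\[
\phi(x) - \phi(x_0) \geq u(x) - u(x_0) \geq w(x) - w(x_0)
\]
for $x$ in a neighborhood of $x_0$, so $\phi - w$ has a local minimum at $x_0$. Since both $\phi$ and $w$ are smooth, this forces $D^2\phi(x_0) \geq D^2 w(x_0) = -D^2_{xx} c(x_0, y_0)$, which is precisely the claimed inequality $D^2\phi(x_0) + D^2_{xx} c(x_0, T_\phi(x_0)) \geq 0$. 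One subtlety: these Hessians are spherical Hessians, so I would either work in the geodesic normal coordinates at $x_0$ (where, as established earlier in the excerpt, the spherical Hessian becomes the flat Hessian and Christoffel terms vanish) or simply invoke that the touching-from-below relation between smooth functions on a manifold yields the inequality of Hessians at the contact point.

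The main obstacle I expect is the first step — cleanly justifying that the maximizer $y_0$ in the $c$-transform coincides with $T_\phi(x_0)$ when $u$ is only Lipschitz, not $C^2$. For a genuine $C^1$ (or even $C^2$) solution this is Lemma~\ref{lem:cConvexSmooth}, but here $u$ need not be differentiable at $x_0$; it is only that $\phi$ touches from above, giving $\nabla\phi(x_0)$ as a super-gradient. I need the $c$-superdifferential of $u$ at $x_0$ to contain the covector $-\nabla_{\Sf,x} c(x_0, y_0)$ for some maximizer $y_0$, and then match it with $\nabla\phi(x_0)$; the injectivity of $p \mapsto T(x_0,p)$ (equivalently of $y \mapsto -\nabla_{\Sf,x}c(x_0,y)$, which holds for the squared geodesic cost on the sphere away from the cut locus, cf. Loeper's conditions) then pins down $y_0 = T_\phi(x_0)$ uniquely. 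Handling the cut-locus/antipodal issue carefully — ensuring $x_0$ and $y_0$ are not antipodal so $c$ is smooth there — is where the geometry of the sphere enters, and is the part that requires the most care.
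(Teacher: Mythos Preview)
Your proposal is correct and follows essentially the same route as the paper: after identifying $y_0 = T_\phi(x_0)$ via $\nabla\phi(x_0)\in\partial u(x_0)$ and the coincidence of the subdifferential with the $c$-subdifferential, one observes that $x\mapsto -c(x,y_0)-\psi(y_0)-\phi(x)$ (equivalently, your $w-\phi$) has a local maximum at $x_0$, and the second-order optimality condition for this smooth function yields the inequality. The paper's proof is terser and does not spell out the first-step and cut-locus subtleties you flag, but the argument is the same.
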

\begin{proof}
At the maximizer $x_0$ of $u-\phi$, we must have $\nabla\phi(x_0)\subset\partial u(x_0)$.

Since $u$ is $c$-convex, there exists a function $u^c$ such that
\[ u(x)+u^c(y) = -c(x,y), \quad y \in \partial u(x). \]
Thus the maximizer $x_0$ of $u-\phi$ will also maximize the function $-u^c(y)-c(x,y)-\phi(x)$, where we can in particular choose $y = T_\phi(x_0)$.
The optimality condition for this is
\[ -D_{xx}^2c(x_0,y)-D^2\phi(x_0) \leq 0, \quad y = T_\phi(x_0). \qedhere\]
\end{proof}

\begin{lemma}[Lipschitz bounds on subsolutions]\label{lem:gradSubs}
Let $u\in C^{0,1}$ be $c$-convex where $c(x,y) = \frac{1}{2}d^2_{\mathbb{S}^2}(x,y)$.  Then the Lipschitz constant of $u$ is bounded by $\pi$.
\end{lemma}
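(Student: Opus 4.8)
The plan is to argue directly from the definition of $c$-convexity, with essentially no analysis beyond an elementary Lipschitz estimate for the cost. Since $u$ is $c$-convex with $c(x,y) = \frac12 d_{\Sf}(x,y)^2$, by \eqref{eq:ctransform} there is a function $\psi:\Sf\to\R$ with
\[ u(x) = \sup_{y\in\Sf}\{-c(x,y) - \psi(y)\}. \]
The first step is to record that, for each fixed $y\in\Sf$, the map $x\mapsto c(x,y)$ is globally $\pi$-Lipschitz on $\Sf$, with constant independent of $y$. I would obtain this by factoring
\[ c(x_1,y) - c(x_2,y) = \tfrac12\big(d_{\Sf}(x_1,y)+d_{\Sf}(x_2,y)\big)\big(d_{\Sf}(x_1,y)-d_{\Sf}(x_2,y)\big), \]
noting that the intrinsic diameter of the unit sphere is $\pi$, so the first factor is at most $2\pi$, while the reverse triangle inequality gives $\abs{d_{\Sf}(x_1,y)-d_{\Sf}(x_2,y)}\le d_{\Sf}(x_1,x_2)$; hence $\abs{c(x_1,y)-c(x_2,y)}\le \pi\, d_{\Sf}(x_1,x_2)$. (Equivalently, wherever $c(\cdot,y)$ is differentiable, $\grad_{\Sf,x}c(x,y) = -\exp_x^{-1}(y)$, whose norm is $d_{\Sf}(x,y)\le\pi$; the factored form is preferable because it sidesteps the non-differentiability of $c(\cdot,y)$ at the antipodal cut locus.)

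The second step is to pass the bound through the supremum. For each fixed $y$ the function $x\mapsto -c(x,y)-\psi(y)$ is $\pi$-Lipschitz, since subtracting the constant $\psi(y)$ does not change the Lipschitz constant, and $u$ is the pointwise supremum of this family. Because $u\in C^{0,1}(\Sf)$ is finite-valued, the standard fact that a pointwise supremum of $L$-Lipschitz functions is itself $L$-Lipschitz (wherever finite) yields that $u$ is $\pi$-Lipschitz; in particular $\norm{\grad u(x)}\le\pi$ at every point of differentiability.

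I do not expect a genuine obstacle here. The only point needing a little care is the uniform-in-$y$ Lipschitz bound on $c(\cdot,y)$ near the cut locus, which the factored distance identity handles cleanly, together with confirming that the sharp constant is exactly $\pi$, the diameter of $\Sf$, rather than a larger multiple. This is precisely the constant that reappears in the admissible range $R>\pi$ for the gradient constraint in \eqref{eq:gradBound}.
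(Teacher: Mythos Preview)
Your argument is correct and is genuinely different from the paper's proof. The paper works pointwise: at each point of differentiability it invokes the identification $\partial^c u(x)=\partial u(x)$ from Loeper's regularity theory, so that $\nabla u(x)=-\nabla_x c(x,y)$ for some $y\in G_u(x)$, and then bounds $\norm{\nabla_x c(x,y)}=d_{\Sf}(x,y)\,\norm{\nabla_x d_{\Sf}(x,y)}\le\pi$ by estimating the directional derivatives of the distance function via the triangle inequality. Your route is structural rather than differential: you use only the representation $u=\sup_y\{-c(\cdot,y)-\psi(y)\}$, the elementary factorization $c(x_1,y)-c(x_2,y)=\tfrac12(d_{\Sf}(x_1,y)+d_{\Sf}(x_2,y))(d_{\Sf}(x_1,y)-d_{\Sf}(x_2,y))$ together with the reverse triangle inequality and the diameter bound, and the standard fact that a supremum of $L$-Lipschitz functions is $L$-Lipschitz. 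This avoids both the cut-locus differentiability issue and the external citation to the $c$-subdifferential identity, and gives the sharp constant $\pi$ with essentially no analysis. The paper's approach, by contrast, makes the connection to the optimal map $T_u$ explicit, which is conceptually closer to how the bound is used downstream (the constraint $\norm{\nabla u}\le R$ with $R>\pi$).
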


\begin{proof}
We first consider $x\in\Sf$ such that $u$ is differentiable at $x$.
As in~\cite{Loeper_OTonSphere}, we define the set
\[
G_{u}(x) = \{y \in \mathbb{S}^2, u(x) + u^{c}(y) = -c(x,y) \}.
\]
Letting $\partial^cu(x)$ denote the $c$-subdifferential of $u$, defined as
\[
\partial^{c} u(x) = \left\{ -\nabla_{x} c(x,y), y \in G_{u}(x) \right\},
\]
due to Loeper~\cite{LoeperReg} Proposition 2.11 we know that for all $c$-convex $u$, 
\[\emptyset \neq \partial^{c} u(x) = \partial u(x) \] 
Thus $\nabla u(x) = \partial^{c} u(x)$. 

To bound $\nabla u$, we need only bound the gradient of the cost function $c(x,y)$:
\[
\nabla_{x} c(x,y) = d_{\mathbb{S}^2}(x,y) \nabla_{x} d_{\mathbb{S}^2}(x,y)
\]
Letting $\hat{n}$ denote a unit tangent vector in the tangent plane $\Tf(x)$, we compute
\[
 \nabla_{x} d_{\mathbb{S}^2}(x,y) \cdot \hat{n} = \lim_{s \rightarrow 0} \frac{d_{\mathbb{S}^2}(\text{exp}_{x}(s \hat{n}),y) - d_{\mathbb{S}^2}(x,y)}{s}.
\]
From the triangle inequality we obtain the bounds
\[
\nabla_{x} \cdot \hat{n} d_{\mathbb{S}^2}(x,y) \leq \lim_{\norm{ \Delta x } \rightarrow 0} \frac{d_{\mathbb{S}^2}(\text{exp}_{x}(s\hat{n}),x) + d_{\mathbb{S}^2}(x,y) - d_{\mathbb{S}^2}(x,y)}{s} = 1
\]
and
\[
\nabla_{x} \cdot \hat{n} d_{\mathbb{S}^2}(x,y) \geq \lim_{s \rightarrow 0} \frac{d_{\mathbb{S}^2}(\text{exp}_{x}(s \hat{n}_x),y) - d_{\mathbb{S}^2}(\text{exp}_{x}(s \hat{n}_x),y) - d_{\mathbb{S}^2}(\text{exp}_{x}(s \hat{n}_x),x)}{s} = -1.
\]

Therefore
\begin{equation}
\norm{\nabla u(x)} \leq \norm{\nabla_{x} c(x,y)} \leq d_{\mathbb{S}^2}(x,y) \leq \pi
\end{equation}
at points $x$ where $u$ is differentiable.  Since $u$ is Lipschitz continuous, this gradient bound is also a bound on the Lipschitz constant.
\end{proof}

\begin{proof}[Proof of Theorem~\ref{thm:equivalenceNonsmooth}]
Suppose that $u$ is a $c$-convex viscosity solution of~\eqref{eq:OTPDE}.  Consider any $x_0\in\Sf$ and $\phi\in C^\infty(\Sf)$ such that $u-\phi$ has a local maximum at $x_0$.  Then
\[ F(x_0,\nabla\phi(x_0),D^2\phi(x_0)) \leq 0. \]
Moreover, since $u-\phi$ is a maximum we know that $\nabla\phi(x_0)\subset\partial u(x_0)$. From Lemma~\ref{lem:gradSubs} we find that $\norm{\nabla\phi(x_0)} - R < 0$. {Additionally, since $u$ is $c$-convex, $\phi$ must be locally $c$-convex as well near $x_0$ (Lemma~\ref{lem:testFunctions}) so that $\phi\in\Ef(F)$ is a valid test function for the original PDE operator.}    Thus
\[ F^+(x_0,\nabla\phi(x_0),D^2\phi(x_0)) = F(x_0,\nabla\phi(x_0),D^2\phi(x_0)) \leq 0 \]
and the modified operator will satisfy
\[ \max\{F^+(x_0,\nabla\phi(x_0),D^2\phi(x_0)),\norm{\nabla\phi(x_0)} - R\} \leq 0. \]
Therefore $u$ is a sub-solution of~\eqref{eq:modifiedPDE}.

Next we consider $x_0\in\Sf$ and $\phi\in C^\infty(\Sf)$ such that $u-\phi$ has a local minimum at $x_0$.  
If $\phi$ satisfies the constraint~\eqref{eq:cConvexSmooth} then $\phi\in\Ef(F)$ is a valid test function for the original PDE operator.
Thus, by the fact that $u$ is a supersolution of~\eqref{eq:OTPDE}, we have
\[\max\{F^+(x_0,\nabla\phi(x_0),D^2\phi(x_0)),\norm{\nabla\phi(x_0)} - R\} \geq F(x_0,\nabla\phi(x_0),D^2\phi(x_0)) \geq 0.\]
Otherwise, $D^2\phi(x_0)+D^2_{xx}c(x_0,T_\phi(x_0))$ is not positive semi-definite.  From the definition of the modified determinant operator~\eqref{eq:detPlus}, this means that
\[ F^+(x_0,\nabla\phi(x_0),D^2\phi(x_0)) \geq -{\det}^+(D^2\phi(x_0)+D^2_{xx}c(x_0,T_\phi(x_0))) > 0. \]
This again leads to the inequality
\[ \max\{F^+(x_0,\nabla\phi(x_0),D^2\phi(x_0)),\norm{\nabla\phi(x_0)} - R\} > 0.\]
In either case, we conclude that $u$ is a super-solution, and therefore also a viscosity solution, of~\eqref{eq:modifiedPDE}.

\end{proof}

\section{Convergence Framework}\label{sec:numerics}

%

\subsection{Discrete formulation}\label{sec:discrete}
In order to numerically solve~\eqref{eq:OTPDE}, we begin with a point cloud $\G^h \subset\Sf$ that discretizes the sphere. We define the discretization parameter $h$ as
\begin{equation}\label{eq:h}
h = \sup\limits_{x\in\Sf}\min\limits_{y\in\G^h} d_{\Sf}(x,y).
\end{equation}
In particular, this guarantees that any ball of radius $h$ on the sphere will contain at least one discretization point.

We will impose some mild structural regularity on the grid.
\begin{hypothesis}[Conditions on point cloud]\label{hyp:grid}
There exists a triangulation $T^h$ of $\G^h$ with the following properties:
\begin{enumerate}
\item[(a)] The diameter of the triangulation, defined as
\bq\label{eq:diam}  \text{diam}(T^h) = \max\limits_{t\in T^h} \text{diam}(t),\eq
 satisfies $\text{diam}(T^h)\to0$ as $h\to0$.
\item[(b)] There exists some $\gamma < \pi$ (independent of $h$) such that whenever $\theta$ is an interior angle of any triangle $t\in T^h$ then $\theta \leq \gamma$.
\end{enumerate}
\end{hypothesis}
We remark that these are fairly standard assumptions on a grid: we are simply prohibiting long, thin triangles.

We also associate to each point cloud $\G^h$ a search radius $r(h)$ chosen to satisfy
\bq\label{eq:r} r(h)\to0, \, \frac{h}{r(h)} \to 0 \text{ as } h\to 0, \quad \text{diam}(T^h) < r(h). \eq

%
%
%

Now we considering the problem of constructing a discretization of~\eqref{eq:modifiedPDE} at the point $x_0\in\G^h$.  We begin by projecting nearby grid points onto the local tangent plan $\Tf_{x_{0}}$, which is spanned by the orthonormal vectors $\left( \hat{\theta},\hat{\phi} \right)$.  For all points $x_i\in\G^h\cap B(x_0,r(h))$, we define their projection onto the tangent plane through geodesic normal coordinates via
\bq\label{eq:neighbours}
z_i = x_0 \left(1 - d_{\Sf}(x_0,x_i) \cot d_{\Sf}(x_0,x_i) \right) + x_i \left( d_{\Sf}(x_0,x_i) \csc d_{\Sf}(x_0,x_i) \right).
\eq
Let $\Zf^h(x_0)\subset\Tf_{x_{0}}$ be the resulting collection of points.
See Figure~\ref{fig:sphere}.

\begin{figure}[htp]
\centering
\subfigure[]{
\includegraphics[width=0.5\textwidth,clip=true,trim=0 1in 0 0.8in]{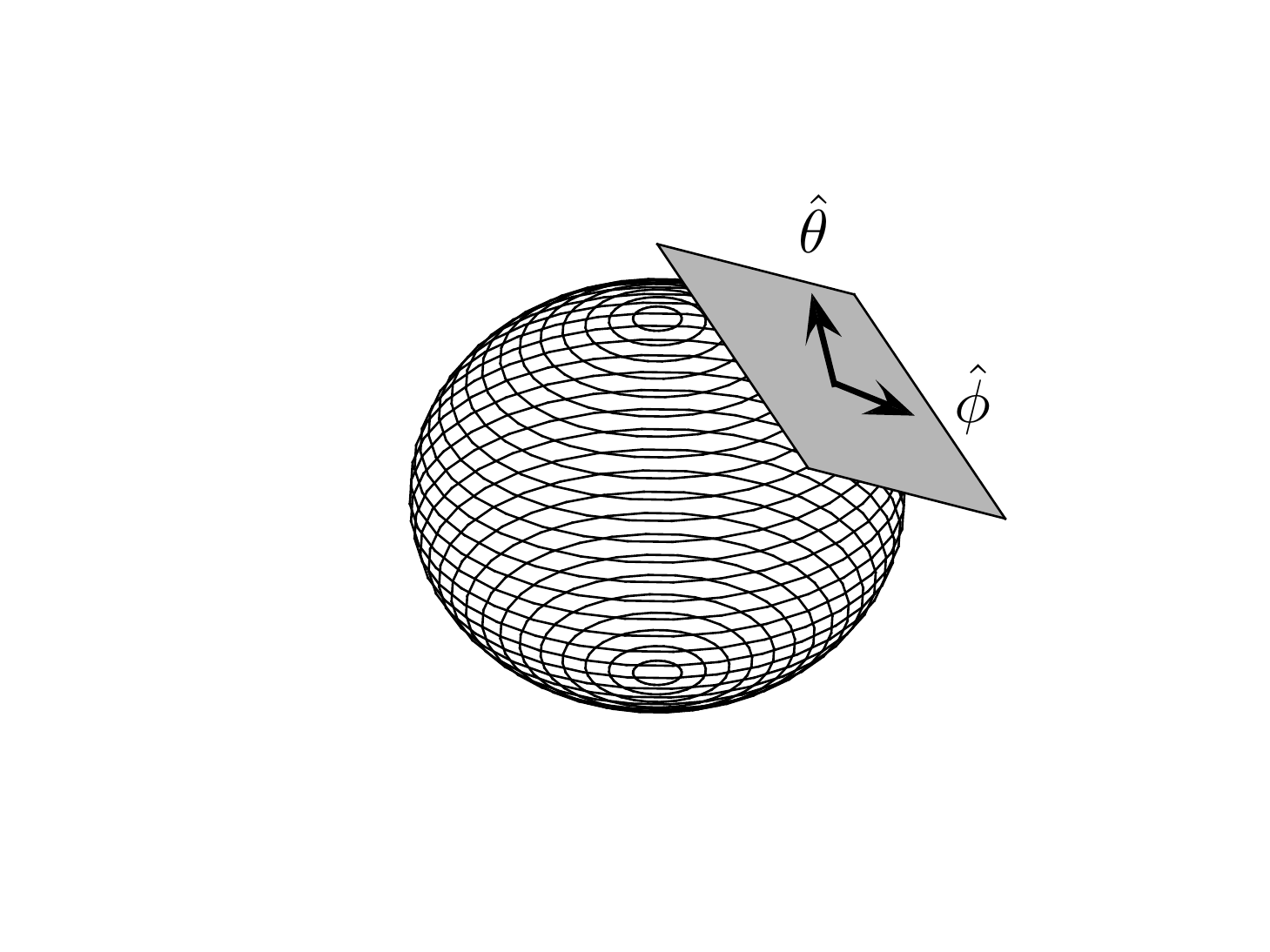}\label{fig:sphere1}} 
\subfigure[]{
\includegraphics[width=0.3\textwidth]{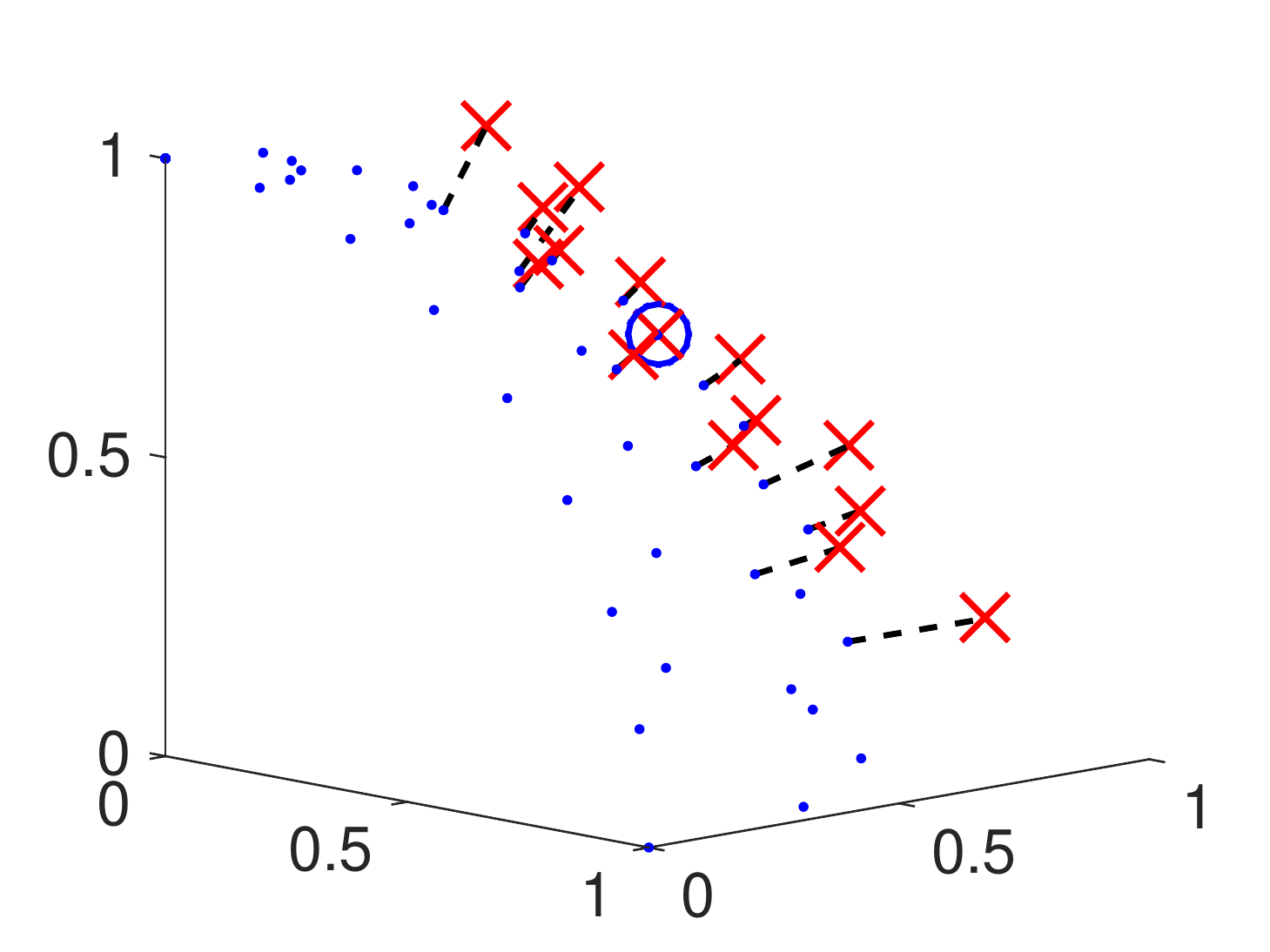}\label{fig:sphere2}}
\caption{\subref{fig:sphere1}~The sphere $\Sf$ and tangent plane $\Tf_{x_{0}}$. \subref{fig:sphere2}~A point cloud discretizing one octant of the unit sphere ($\cdot$), the point $x_0$ (o), and the projections $z$ of neighboring nodes onto $\Tf_{x_{0}}$ ($\times$). }
\label{fig:sphere}
\end{figure}

These are now the discretization points available to use for the approximation of~\eqref{eq:modifiedPDE} at $x_0$; recall that this PDE is posed on the two-dimensional tangent plane.  There are three components to this discretization: approximation of the \MA type operator $F^+(z,\nabla u(z),D^2u(z))$~\eqref{eq:MAPlus}, approximation of the Eikonal term $\nabla u(x)$, and approximation of the averaging term~$\langle u \rangle$. 
Let $F^h$, $E^h$, and $A^h$ be suitable discretizations of these three operators. 

Our framework will allow for a very general choice of schemes $F^h$ and $A^h$.  In particular, many currently available methods for the \MA equation can be adapted to fit within our requirements.  The specific requirements are:
\begin{hypothesis}[Conditions on schemes]\label{hyp:schemes}
We require the schemes $F^h(x,u(x)-u(\cdot))$ and $A^h(u(\cdot))$ to satisfy:
\begin{enumerate}
\item[(a)] $F^h$ is consistent with~\eqref{eq:MAPlus} on all $C^2$ smooth functions.
\item[(b)] $F^h$ is monotone.
\item[(c)] $A^h$ is consistent with the averaging operator~\eqref{eq:average} on all Lipschitz continuous functions.
\item[(d)] $A^h$ is linear and $A^h(c) = c$ for any constant function $c$.
\end{enumerate}
\end{hypothesis}

If we wish to obtain non-smooth solutions, $F^h$ will also need to be underestimating. We will require additional structure on $E^h$ in order to obtain the strong form of stability needed to guarantee convergence.  In particular, we propose
\bq\label{eq:eik}
E^h(z,u(z)-u(\cdot)) = \max\limits_{y\in\Zf^h(z)}\frac{ u(z)-u(y) }{\norm{z-y}},
\eq
which is consistent with $\norm{ \nabla u(z) }$ and monotone (Lemma~\ref{lem:eik}).

This allows us to produce the following consistent, monotone approximation of~\eqref{eq:modifiedPDE}:
\bq\label{eq:approx2}
G^h(x,u(x)-u(\cdot)) = \max\{F^h(x,u(x)-u(\cdot)),E^h(x,u(x)-u(\cdot))-R\}.
\eq

Finally, we represent our overall approach through the following two-step approach:
\begin{enumerate}
\item[1.] Solve the discrete system
\bq\label{eq:vhScheme}
G^h(x,v^h(x)-v^h(\cdot))+\tau(h)v^h(x) = 0, \quad x \in \G^h
\eq
for the grid function $v^h$.
\item[2.] Define the candidate solution
\bq\label{eq:uh}
u^h(x) = v^h(x) - A^h(v^h(\cdot)), \quad x \in \G^h.
\eq
\end{enumerate}

We remark that our candidate solution $u^h$ could also be obtained directly through solution of the non-local approximation scheme
\bq\label{eq:uhScheme}
\begin{aligned}
G^h(x,u^h(x)-u^h(\cdot)) +\tau(h)u^h(x) + A^h\left(G^h(x,u^h(x)-u^h(\cdot))\right) = 0.
\end{aligned}
\eq

\subsection{Stability}\label{sec:stability}
We now establish some important stability properties of the solutions~$v^h$, $u^h$ of the schemes~\eqref{eq:vhScheme}-\eqref{eq:uhScheme}.  Consistency and monotonicity underpin these results.  They are built into our hypotheses on the scheme for the \MA type operator in order to allow for great flexibility in the numerical method.  However, we also need to establish these properties for our proposed discretization of the Eikonal operator.
\begin{lemma}[Approximation of Eikonal operator]\label{lem:eik}
The scheme~$E^h$ is consistent with $\norm{ \nabla u }$ and monotone.
\end{lemma}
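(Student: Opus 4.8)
The plan is to verify the two claimed properties of the Eikonal scheme $E^h$ defined in~\eqref{eq:eik} separately, both of which reduce to elementary observations about the finite difference quotients $(u(z)-u(y))/\norm{z-y}$ and the maximum operator.

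\textbf{Monotonicity.} Recall from Definition~\ref{def:monotonicity} that we must show $E^h(z,u(z)-u(\cdot))$ is a non-decreasing function of its final argument, i.e.\ of the vector of values $(u(z)-u(y))_{y\in\Zf^h(z)}$. First I would note that for each fixed neighbour $y$, the quantity $(u(z)-u(y))/\norm{z-y}$ is (trivially) non-decreasing in the difference $u(z)-u(y)$, since $\norm{z-y}>0$. The maximum over $y\in\Zf^h(z)$ of a finite collection of non-decreasing functions is again non-decreasing, so $E^h$ is monotone. This step is routine and poses no obstacle.

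\textbf{Consistency.} By Definition~\ref{def:consistency}, fix a smooth $\phi$ and a point $z_0\in\Sf$; I must show that the limsup and liminf of $E^h(z,\phi(z)-\phi(\cdot))$ as $h\to 0$, $z\in\G^h\to z_0$ (with the usual perturbations) equal $\norm{\nabla_{\Sf}\phi(z_0)}$. The argument has two halves. For the upper bound: for each neighbour $y\in\Zf^h(z)$, writing $y = v_z(x_i)$ for some grid point $x_i\in B(z,r(h))$, a Taylor expansion of $\tilde\phi_z$ on the tangent plane $\Tf_z$ (using that geodesic normal coordinates preserve distances from $z$ and that $\nabla\tilde\phi_z(z)=\nabla_{\Sf}\phi(z)$, as established in the tangent-plane lemma) gives $\phi(z)-\phi(y) = \nabla_{\Sf}\phi(z)\cdot(z-y) + O(\norm{z-y}^2)$, hence $(\phi(z)-\phi(y))/\norm{z-y} \le \norm{\nabla_{\Sf}\phi(z)} + C\,r(h)$, where $C$ depends on $\norm{D^2\phi}_\infty$ and $\norm{z-y}\le r(h)$. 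Taking the max over $y$ and letting $h\to 0$ (so $r(h)\to 0$ and, by continuity, $\nabla_{\Sf}\phi(z)\to\nabla_{\Sf}\phi(z_0)$) yields $\limsup E^h \le \norm{\nabla_{\Sf}\phi(z_0)}$. For the matching lower bound: I need a neighbour $y$ roughly in the direction of $\nabla_{\Sf}\phi(z)$. Here the resolution condition $h/r(h)\to 0$ from~\eqref{eq:r}, together with the density property that every ball of radius $h$ contains a grid point and the non-degeneracy of the triangulation (Hypothesis~\ref{hyp:grid}), guarantees that within $B(z,r(h))$ there is a grid point whose tangent-plane image $y$ makes an angle $o(1)$ with the direction $\nabla_{\Sf}\phi(z)/\norm{\nabla_{\Sf}\phi(z)}$ and with $\norm{z-y}$ comparable to $r(h)$ — so that $(\phi(z)-\phi(y))/\norm{z-y} \ge \norm{\nabla_{\Sf}\phi(z)} - o(1)$. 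Then $\liminf E^h \ge \norm{\nabla_{\Sf}\phi(z_0)}$, completing consistency. (If $\nabla_{\Sf}\phi(z_0)=0$ the lower bound is automatic since $E^h \ge 0$ whenever $z$ is a local max of $\phi$ on the stencil, and in general one argues directly that the difference quotients are bounded below by $-C r(h)$, which already gives $\liminf \ge 0 = \norm{\nabla_\Sf\phi(z_0)}$.)

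\textbf{Main obstacle.} The only genuinely delicate point is the lower bound in the consistency proof: producing, from the abstract grid hypotheses, a neighbour $y$ aligned (to leading order) with the gradient direction and at distance $\Theta(r(h))$ from $z$. This is exactly where the scaling $h/r(h)\to0$ and the bounded-angle condition on $T^h$ are used — the search radius $r(h)$ is large enough relative to $h$ that the stencil ``sees'' points in essentially every direction, yet small enough that the Taylor remainder is negligible. I would isolate this as a short geometric sub-lemma (covering the sphere of directions in $\Tf_z$ by cones and invoking the ball-covering property of $\G^h$) and then the rest of the consistency argument is a direct Taylor estimate.
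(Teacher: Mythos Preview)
Your proposal is correct and follows essentially the same approach as the paper: monotonicity is immediate from the form of~\eqref{eq:eik}, and consistency is obtained by viewing $E^h$ as a maximum of backward-difference approximations to directional derivatives, with the upper bound coming from Taylor expansion and the lower bound from the directional resolution going to zero as $h/r(h)\to 0$ (the paper cites \cite[Lemma~11]{FroeseMeshfreeEigs} for this last fact, whereas you propose proving it as a short geometric sub-lemma). One small remark: the bounded-angle condition on the triangulation in Hypothesis~\ref{hyp:grid} is not needed for this lemma---directional coverage of the stencil follows from the ball-covering density of $\G^h$ together with $h/r(h)\to 0$ alone, and the triangulation hypothesis is only used later for the interpolation estimates.
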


\begin{proof}
Monotonicity is immediately evident from the definition of $E^h$~\eqref{eq:eik}.

Now we recall that the magnitude of the gradient can be characterized as a maximal directional derivative,
\[ \norm{ \nabla u } = \max\limits_{\norm{ \nu } = 1} \frac{\partial u}{\partial \nu}. \]
We can obtain an approximation of the first directional derivative in the direction $\nu = \dfrac{z-y}{\norm{ z-y }}$ via standard backward differencing:
\bq\label{eq:backward} \Dt_{z-y}u(z) = \frac{u(z)-u(y)}{\norm{z-y}}. \eq

Now we consider the set of all such directions that can be resolved using our given set of neighbours $\Zf^h(z)$, defined as
\[ V^h(z) = \left\{\frac{z-y}{\norm{ z-y }} \mid y \in \Zf^h(z)\right\}. \]
The discretization $E^h$ can be rewritten as
\bq\label{eq:eik2} E^h(z,u(z)-u(\cdot)) = \max\limits_{\nu\in V^h(z)} \Dt_\nu u(z). \eq

We denote the directional resolution of this approximation by $d\theta$, which can be computed by
\[ d\theta = \sup\limits_{\norm{\nu} = 1} \min\limits_{y\in\Zf^h(z)} \cos^{-1}\left(\frac{z-y}{\norm{ z-y }}\cdot\nu\right). \]
We also remark that projecting the points $x_i\in\G^h\cap B(x_0,r(h))$ onto the plane preserves both the spacing of grid points $h$ and the effective search radius $r(h)$ up to a constant scaling.  Since $r(h)\to0$, the effective grid spacing also goes to zero and thus~\eqref{eq:backward} is a consistent differencing operator.  Since $\dfrac{h}{r(h)} \to 0$ as $h\to 0$, we will also have  $d\theta\to0$ as $h\to0$ as in~\cite[Lemma~11]{FroeseMeshfreeEigs}.  Thus $E^h$ defined as~\eqref{eq:eik2} is consistent.
\end{proof}

An immediate consequence of this is the consistency and monotonicity of our overall scheme~\eqref{eq:approx2}.
\begin{lemma}[Consistency and monotonicity]
Let $\G^h$ and $F^h$ satisfy the conditions of Hypotheses~\ref{hyp:grid} and~\ref{hyp:schemes} respectively.  The the approximation $G^h$ given by~\eqref{eq:approx2} is monotone and consistent with the PDE~\eqref{eq:modifiedPDE}.
\end{lemma}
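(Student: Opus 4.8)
The plan is to prove monotonicity and consistency separately, in each case reducing the claim for $G^h = \max\{F^h,\,E^h - R\}$ to the corresponding property already in hand for the two ingredients $F^h$ and $E^h$; this works because the limiting operator inherits exactly the same structure, $G = \max\{F^+,\,\norm{\nabla u}-R\}$.

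Monotonicity is immediate: $F^h$ is monotone by Hypothesis~\ref{hyp:schemes}(b), $E^h$ is monotone by Lemma~\ref{lem:eik}, subtracting the fixed constant $R$ has no effect, and the pointwise maximum of two functions that are each non-decreasing in their final argument(s) is again non-decreasing. This is exactly Definition~\ref{def:monotonicity}.

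For consistency I would first observe that neither $G^h$ nor $G$ depends on the undifferentiated value $u(x)$, so the shift $\xi$ in Definition~\ref{def:consistency} is inert and may be dropped; it then suffices, for a fixed smooth $\phi$ and $x\in\Sf$, to bound $\limsup G^h(z,\phi(y)-\phi(\cdot))$ above by $G^*$ and $\liminf G^h(z,\phi(y)-\phi(\cdot))$ below by $G_*$ as $h\to0$, $y\to x$, $z\in\G^h\to x$. Using the elementary compatibility of a pointwise maximum with $\limsup$ and $\liminf$, together with the consistency of $F^h$ with~\eqref{eq:MAPlus} (Hypothesis~\ref{hyp:schemes}(a)) and of $E^h$ with $\norm{\nabla u}$ (Lemma~\ref{lem:eik}) --- the latter yielding an actual limit $E^h\to\norm{\nabla\phi(x)}$ since the Eikonal operator $(x,p)\mapsto\norm p$ is continuous --- one gets $\limsup G^h \le \max\{(F^+)^*(x,\nabla\phi(x),D^2\phi(x)),\,\norm{\nabla\phi(x)}-R\}$ and $\liminf G^h \ge \max\{(F^+)_*(x,\nabla\phi(x),D^2\phi(x)),\,\norm{\nabla\phi(x)}-R\}$.

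The remaining step is to identify these bounds with $G^*$ and $G_*$. Because the term $\norm{\nabla\phi(\cdot)}-R$ is continuous, taking upper and lower semicontinuous envelopes commutes with the maximum, so $G^* = \max\{(F^+)^*,\,\norm{\nabla\phi}-R\}$ and $G_* = \max\{(F^+)_*,\,\norm{\nabla\phi}-R\}$, which closes the argument. I expect this last envelope computation to be the only point needing genuine care: $F^+$ is truly discontinuous (through $\det^+$), so one cannot assert $G^* = \max\{(F^+)^*,\dots\}$ gratis but must verify it by evaluating $\max\{F^+,g\}$ along sequences realizing the envelopes of $F^+$ and invoking continuity of $g=\norm{\nabla\phi}-R$. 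Everything else is routine.
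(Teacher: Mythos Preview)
Your proposal is correct and follows exactly the approach the paper intends: the paper states this lemma without proof, calling it ``an immediate consequence'' of Lemma~\ref{lem:eik} together with Hypothesis~\ref{hyp:schemes}, and your argument is the natural unpacking of that claim. Your care with the envelope identities $G^* = \max\{(F^+)^*,\,\norm{\nabla\phi}-R\}$ and $G_* = \max\{(F^+)_*,\,\norm{\nabla\phi}-R\}$ is appropriate and correct, and in fact supplies more detail than the paper itself gives.
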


We now use the monotonicity property (and resulting discrete comparison principle) to establish existence and bounds for the solution to our approximation scheme.

\begin{lemma}[Existence and stability (smooth case)]\label{lem:stabilitySmooth}
Consider the schemes~\eqref{eq:vhScheme}-\eqref{eq:uh} under the conditions of Hypothesis~\ref{hyp:Smooth}, \ref{hyp:grid}, and~\ref{hyp:schemes}.  Then solutions $v^h$, $u^h$ exist and are unique.  Moreover, there exists some $M>0$ (independent of $h$) such that $\|v^h\|_\infty, \|u^h\|_\infty \leq M$ for all sufficiently small $h>0$.
\end{lemma}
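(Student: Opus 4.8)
I would obtain existence and uniqueness from the standard theory of proper monotone schemes, and the uniform bound by sandwiching $v^h$ between two shifted copies of the exact solution via the discrete comparison principle. The key observation making this work is that the added term $\tau(h)v^h(x)$ in~\eqref{eq:vhScheme} turns the scheme $G^h$ (which is merely monotone) into a \emph{proper} monotone scheme, so Lemma~\ref{lem:discreteComp} applies to the perturbed operator
\[ \tilde G^h(x,w(x),w(x)-w(\cdot)) := G^h(x,w(x)-w(\cdot)) + \tau(h)w(x). \]
This $\tilde G^h$ is continuous, monotone in its difference argument, and strictly increasing in $w(x)$. Uniqueness of $v^h$ is then immediate: two solutions each dominate the other. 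Existence follows from a Perron-type argument on the finite grid $\G^h$: the set $S$ of subsolutions is nonempty (it contains the barrier $u-K$ built below) and bounded above (by the supersolution $u+K$, via Lemma~\ref{lem:discreteComp}); monotonicity shows $S$ is closed under pointwise maxima, so $V := \sup S$ is a subsolution by continuity; and properness shows $V$ cannot be a strict subsolution at any node (raising $V$ slightly there would keep it in $S$), so $V$ solves~\eqref{eq:vhScheme}. Since $u^h$ is an explicit affine function of $v^h$ via~\eqref{eq:uh}, existence and uniqueness of $u^h$ follow.

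\textbf{Uniform bound on $v^h$.} This is the crux. Let $u$ be the mean-zero $c$-convex solution of~\eqref{eq:OTPDE}; by Theorem~\ref{thm:regularity} we have $u\in C^3(\Sf)$, and by Theorem~\ref{thm:equivalenceSmooth} this same $u$ solves the modified equation~\eqref{eq:modifiedPDE}, so $G(x,\nabla u(x),D^2u(x))=0$. Consistency of $G^h$ with~\eqref{eq:modifiedPDE} — which holds under Hypotheses~\ref{hyp:grid} and~\ref{hyp:schemes} together with Lemma~\ref{lem:eik}, using that $|\max(a,b)-\max(c,d)|\le\max(|a-c|,|b-d|)$ to handle the outer maximum — together with Definition~\ref{def:truncation} provides a constant $C$ independent of $h$ with
\[ \abs{G^h(x,u(x)-u(\cdot))} = \abs{G^h(x,u(x)-u(\cdot)) - G(x,\nabla u(x),D^2u(x))} \le C\tau(h), \quad x\in\G^h, \]
for all sufficiently small $h$. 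Put $K = C + \norm{u}_\infty$. Then for every $x\in\G^h$,
\[ \tilde G^h(x,u+K) = G^h(x,u(x)-u(\cdot)) + \tau(h)(u(x)+K) \ge \tau(h)(K - C - \norm{u}_\infty) = 0, \]
and symmetrically $\tilde G^h(x,u-K)\le 0$. Since $\tilde G^h(x,v^h)=0$, two applications of Lemma~\ref{lem:discreteComp} give $u-K \le v^h \le u+K$ on $\G^h$, hence $\norm{v^h}_\infty \le 2\norm{u}_\infty + C =: M$, independent of $h$.

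\textbf{Bound on $u^h$.} By~\eqref{eq:uh}, $u^h$ differs from $v^h$ by the single constant $A^h(v^h(\cdot))$ (so $\|u^h\|_\infty \le \|v^h\|_\infty + |A^h(v^h(\cdot))|$ and $A^h(u^h(\cdot))=0$ by linearity and $A^h(1)=1$). Applying $A^h$ to the pointwise bound $u-K\le v^h\le u+K$, using linearity and exactness on constants together with the stability of the averaging operator $A^h$ (e.g.\ that it is a weighted average with nonnegative weights summing to one, as holds for natural discretizations of~\eqref{eq:average}), gives $\abs{A^h(v^h(\cdot))} \le \abs{A^h(u(\cdot))} + K$; and $A^h(u(\cdot))\to\langle u\rangle = 0$ by consistency of $A^h$ on the Lipschitz function $u$. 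Thus $\abs{A^h(v^h(\cdot))}$, hence $\norm{u^h}_\infty$, is bounded uniformly in $h$, and enlarging $M$ gives the claim.

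\textbf{Main obstacle.} The delicate part is the $h$-independent barrier in the bound for $v^h$: it relies essentially on the two structural facts proved earlier in the paper — that a \emph{smooth} exact solution exists (Theorem~\ref{thm:regularity}) and that it solves the \emph{regularized} equation~\eqref{eq:modifiedPDE} rather than merely the original, ill-posed and comparison-principle-free PDE~\eqref{eq:OTPDE} (Theorem~\ref{thm:equivalenceSmooth}) — and on the fact that the artificial zeroth-order term $\tau(h)v^h$ restores properness, so that Lemma~\ref{lem:discreteComp} is available to sandwich $v^h$. Everything else (existence via Perron, uniqueness, and transferring the estimate through the renormalizing shift to $u^h$) is comparatively routine once these ingredients are in place.
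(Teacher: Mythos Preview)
Your proposal is correct and follows essentially the same approach as the paper: both use the $C^3$ exact solution $u$ of the modified PDE~\eqref{eq:modifiedPDE} as the pivot for a barrier argument, exploiting that the $\tau(h)v^h$ term makes the scheme proper so Lemma~\ref{lem:discreteComp} applies, and then pass the bound through the averaging operator to control $u^h$. The only cosmetic differences are that you spell out a Perron-type existence argument where the paper simply cites \cite[Theorem~8]{ObermanSINUM}, and you track the consistency constant $C$ explicitly (the paper absorbs it into $\tau(h)$); your observation that the final step needs $A^h$ to preserve order (nonnegative weights) is a fair caveat that the paper leaves implicit.
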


\begin{proof}
We remark first of all that the scheme~\eqref{eq:vhScheme} is monotone and proper and therefore has a unique solution $v^h$~\cite[Theorem~8]{ObermanSINUM}, which immediately yields existence of $u^h$.

Let $u$ be the unique mean-zero solution to the PDE~\eqref{eq:modifiedPDE}.  We know that $u\in C^3(\Sf)$ (Theorem~\ref{thm:regularity}) and consequently is bounded.
From consistency of the scheme~\eqref{eq:approx2} we have that
\[\abs{G^h(x,u(x)-u(\cdot))} \leq \tau(h)  \]
for all $x\in\G^h$ and sufficiently small $h>0$.

Now we choose some $c>0$ and substitute $u + c$ into the scheme~\eqref{eq:vhScheme}.
\begin{align*}
G^h(&x,(u(x)+ c)-(u(\cdot)+ c)) + \tau(h)(u(x)+ c)  \geq -\tau(h) + \tau(h)(-\|u\|_\infty+c)\\
 &> 0 \\
 &= G^h(x,v^h(x)-v^h(\cdot)) + \tau(h)v^h(x)
\end{align*}
for $c > \|u\|_\infty + 1$.  By the discrete comparison principle (Lemma~\ref{lem:discreteComp}), we have that $v^h \leq u + c \leq 2\|u\|_\infty+1$.  A similar argument produces a lower bound for $v^h$.

This allows us to also bound the discrete average of $v^h$ via
\[ A^h(v^h(\cdot)) \leq A^h(2\|u\|_\infty+1) = 2\|u\|_\infty+1, \]
with a similar lower bound.

Since $v^h$ and $A^h(v^h)$ are bounded uniformly, $u^h = v^h-A^h(v^h)$ is also bounded uniformly.
\end{proof}

With some additional structure on our discretization, we can modify this stability result to also hold in the non-smooth setting.
\begin{lemma}[Existence and stability (non-smooth case)]\label{lem:stabilityNonsmooth}
Consider the schemes~\eqref{eq:vhScheme}-\eqref{eq:uh} under the conditions of Hypothesis~\ref{hyp:Nonsmooth}, \ref{hyp:grid}, and~\ref{hyp:schemes}.  Suppose also that $F^h$ is an underestimating scheme.  Then solutions $v^h$, $u^h$ exist and are unique.  Moreover, there exists some $M>0$ (independent of $h$) such that $\|v^h\|_\infty, \|u^h\|_\infty \leq M$ for all sufficiently small $h>0$.
\end{lemma}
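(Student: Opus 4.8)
The plan is to follow the proof of Lemma~\ref{lem:stabilitySmooth} as closely as possible, the one essential change being that Theorem~\ref{thm:regularity} now only supplies a $C^1$ solution, so consistency can no longer be invoked \emph{at the solution}; the underestimation hypothesis on $F^h$ and the gradient--constraint term $E^h$ must jointly take over that role.

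Existence and uniqueness I would dispose of exactly as in the smooth case: the scheme~\eqref{eq:vhScheme} is monotone (Hypothesis~\ref{hyp:schemes} together with Lemma~\ref{lem:eik}) and proper (because of the $\tau(h)v^h(x)$ term with $\tau(h)>0$), so~\cite[Theorem~8]{ObermanSINUM} gives a unique $v^h$, and then $u^h$ is determined by~\eqref{eq:uh}. For the lower bound on $v^h$ I would compare with the exact solution. Let $u$ be the $c$-convex viscosity solution of~\eqref{eq:OTPDE} produced by Theorem~\ref{thm:regularity} in the non-smooth case, shifted so that $\langle u\rangle=0$. By Lemma~\ref{lem:gradSubs} the Lipschitz constant of $u$ is at most $\pi$, whence $\norm{u}_\infty\le\pi^2$. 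Because the normal coordinates~\eqref{eq:normalCoords} preserve distances to the base point, every difference quotient in~\eqref{eq:eik} is bounded by the Lipschitz constant of $u$, so $E^h(x,u(x)-u(\cdot))\le\pi<R$; and since the gradient constraint is inactive on $u$, it is a $c$-convex solution of the convexified equation~\eqref{eq:MAPlus}, so the underestimation hypothesis gives $F^h(x,u(x)-u(\cdot))\le0$. Hence $G^h(x,u(x)-u(\cdot))\le0$, and for $c>\norm{u}_\infty$ the grid function $u-c$ satisfies $G^h(x,(u-c)(x)-(u-c)(\cdot))+\tau(h)(u-c)(x)\le\tau(h)(\norm{u}_\infty-c)<0$; the discrete comparison principle (Lemma~\ref{lem:discreteComp}) then yields $v^h\ge u-c\ge-(2\norm{u}_\infty+1)$ on $\G^h$.

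The matching upper bound, and with it the bound on $u^h$, is the main obstacle. In the smooth case one compares $v^h$ with $u+c$ using the \emph{two-sided} consistency estimate $\abs{G^h(x,u(x)-u(\cdot))}\le\tau(h)$; at a merely $C^1$ function this fails, since the discrete Hessian inside $F^h$ need not be bounded, and underestimation gives only the one inequality already used. My plan to get around this is to let the gradient--constraint term do the work: from the lower bound just obtained and~\eqref{eq:vhScheme}, $G^h(x,v^h(x)-v^h(\cdot))=-\tau(h)v^h(x)\le\tau(h)(2\norm{u}_\infty+1)$, so $E^h(x,v^h(x)-v^h(\cdot))\le R+1$ once $h$ is small. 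By~\eqref{eq:eik} this says $v^h(x)-v^h(y)\le(R+1)\,d_\Sf(x,y)$ whenever $x_y$ lies in the search ball $B(x,r(h))$, in particular across every triangulation edge since $\text{diam}(T^h)<r(h)$. Chaining this along a quasi-geodesic path in the triangulation, whose length is comparable to $d_\Sf$ thanks to the shape regularity in Hypothesis~\ref{hyp:grid}, bounds $\operatorname{osc}_{\G^h}v^h$ by a constant $\Omega$ independent of $h$. Since $A^h$ is a linear averaging operator fixing constants, $u^h=v^h-A^h(v^h)$ has $A^h(u^h)=0$ and the same oscillation as $v^h$, so $\norm{u^h}_\infty\le\Omega$.

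What remains is to pin the additive level of $v^h$, i.e.\ to bound $A^h(v^h)$ from above, and this is the step I expect to require the most care. The natural attempt is to test~\eqref{eq:vhScheme} at a grid point where $v^h$ is maximal: there the centred second differences entering $F^h$ are nonpositive, so monotonicity and the structure of $\det^+$ force $F^h$, hence $G^h$, to be bounded below by a constant depending only on the cost and the data, and one then combines this with the oscillation bound and the mean-zero property of $u^h$. The delicate point is that a crude version of this argument only yields a bound of order $1/\tau(h)$; obtaining a genuinely $h$-independent bound is where the interplay between underestimation, the gradient constraint, and the zero-average shift has to be used carefully, and I would expect the proof to either sharpen this extremum argument or to supply an $h$-independent supersolution of the scheme adapted to the non-smooth setting. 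Once $A^h(v^h)$ is controlled, $\norm{v^h}_\infty\le\norm{u^h}_\infty+\abs{A^h(v^h)}\le M$ with $M$ independent of $h$, completing the proof.
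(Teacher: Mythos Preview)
Your existence/uniqueness and lower-bound arguments match the paper's proof. The gap is in the upper bound on $v^h$, and there you have missed the paper's key simplification.

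You correctly note that consistency cannot be invoked at the non-smooth solution $u$, but you draw the wrong conclusion. The paper does not try to use $u$ for the upper bound at all: it simply observes that the constant function $\phi(x)=c$ is a smooth supersolution of~\eqref{eq:modifiedPDE}, so consistency of $G^h$ (which applies to \emph{any} smooth function, not only to the exact solution) gives $G^h(x,\phi(x)-\phi(\cdot))\ge -\tau(h)$. Choosing $c>1$ then makes $G^h(x,0)+\tau(h)c>0$, and the discrete comparison principle yields $v^h\le c$ immediately. The deliberate asymmetry between the two bounds is the whole point of the lemma: the lower bound uses the non-smooth exact solution via underestimation, while the upper bound uses an unrelated smooth supersolution via ordinary consistency. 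Your closing speculation about ``supplying an $h$-independent supersolution adapted to the non-smooth setting'' is on target, but the required supersolution is nothing subtle---just a constant.

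Your alternative route (bootstrapping from the lower bound through $E^h$ to an oscillation bound, then using the zero-average shift to control $\norm{u^h}_\infty$) is essentially the content of the paper's later Lemmas~\ref{lem:lipschitzDisc}--\ref{lem:lipschitzSphere}, which derive the discrete Lipschitz bounds \emph{after} $L^\infty$ stability is already in hand. Pulled forward as you propose, it plausibly bounds $\norm{u^h}_\infty$, but as you yourself acknowledge it leaves $A^h(v^h)$, and hence $\norm{v^h}_\infty$, uncontrolled; the extremum argument you sketch cannot close this without additional structural assumptions on $F^h$. The constant-supersolution trick avoids the issue entirely.
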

\begin{proof}
As in Lemma~\ref{lem:stabilitySmooth}, $v^h$ and $u^h$ are uniquely defined.

Let $u$ be the exact mean-zero solution of~\eqref{eq:modifiedPDE}.  Now we know that $u$ is Lipschitz continuous with Lipschitz constant less than $R$.  This implies that
\[ E^h(x,u(x)-u(\cdot)) = \max\limits_{y\in\Zf^h(x)}\frac{ u(x)-u(y) }{\norm{ x-y }} \leq R. \]
Because $F^h$ is an underestimating scheme, we also know that
\[ F^h(x,u(x)-u(\cdot)) \leq 0. \]
Choosing any $c>\|u\|_\infty$ we then obtain
\begin{align*}
G^h(&x,(u(x)- c)-(u(\cdot)- c)) + \tau(h)(u(x)- c)  \leq \tau(h)(\|u\|_\infty-c)\\
 &< 0 \\
 &= G^h(x,v^h(x)-v^h(\cdot)) + \tau(h)v^h(x)
\end{align*}
and by the discrete comparison principle we have the bound $v^h \geq u-\|u\|_\infty \geq -2\|u\|_\infty$.

A simple smooth supersolution of the PDE~\eqref{eq:modifiedPDE} is the constant function $\phi(x) = c$.  Substituting this into the consistent scheme we find that
\begin{align*}
G^h(&x,\phi(x)-\phi(\cdot))+\tau(h)\phi(x) \geq -\tau(h) + \tau(h)c\\
  &>0\\
	&= G^h(x,v^h(x)-v^h(\cdot)) + \tau(h)v^h(x)
\end{align*}
if we choose $c>1$, which yields the bound $v^h \leq 1$.

As in Lemma~\ref{lem:stabilitySmooth}, these uniform bounds on $v^h$ immediately yield uniform bounds on $u^h$.
\end{proof}

An immediate consequence of this is that $u^h$ satisfies a discrete system that is consistent with the PDE~\eqref{eq:modifiedPDE}.
\begin{lemma}[Scheme for $u^h$]\label{lem:uhScheme}
Under the hypotheses of either Lemma~\ref{lem:stabilitySmooth} or Lemma~\ref{lem:stabilityNonsmooth}, $u^h$ satisfies a scheme of the form
\bq\label{eq:uhScheme2}
G^h(x,u^h(x)-u^h(\cdot)) + \tau(h)u^h(x) + \sigma(h) = 0
\eq
where $\sigma(h)\to0$ as $h\to0$.
\end{lemma}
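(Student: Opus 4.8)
The plan is to read off~\eqref{eq:uhScheme2} directly from the two-step construction~\eqref{eq:vhScheme}--\eqref{eq:uh}, exploiting the fact that $G^h$ depends on its grid-function argument only through the differences $u(x)-u(\cdot)$, so that a global additive shift of $v^h$ is invisible to it.

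First I would introduce the shift constant $a_h := A^h(v^h(\cdot))$ and observe that, because $A^h$ is linear with $A^h(c)=c$ for constant functions (Hypothesis~\ref{hyp:schemes}(d)), $a_h$ is a single real number rather than a grid function. Then~\eqref{eq:uh} gives $u^h(x)-u^h(y) = v^h(x)-v^h(y)$ for all $x,y\in\G^h$, hence
\[ G^h(x,u^h(x)-u^h(\cdot)) = G^h(x,v^h(x)-v^h(\cdot)). \]
Substituting the scheme~\eqref{eq:vhScheme} satisfied by $v^h$, and then writing $v^h(x) = u^h(x) + a_h$, I obtain
\[ G^h(x,u^h(x)-u^h(\cdot)) = -\tau(h)v^h(x) = -\tau(h)\big(u^h(x)+a_h\big), \]
which rearranges to exactly~\eqref{eq:uhScheme2} with the identification $\sigma(h) = \tau(h)\,a_h$.

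It then remains only to verify $\sigma(h)\to 0$. Here $\tau(h)\to0$ since the scheme is consistent, and $|a_h| = |A^h(v^h(\cdot))|$ is bounded by a constant independent of $h$: this is precisely the bound on the discrete average of $v^h$ that was already extracted in the proofs of Lemma~\ref{lem:stabilitySmooth} and Lemma~\ref{lem:stabilityNonsmooth} from the uniform estimate $\|v^h\|_\infty\le M$ together with the order-preserving (weighted-average) structure of $A^h$. Consequently $|\sigma(h)|\le M\,\tau(h)\to 0$.

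The argument is essentially bookkeeping and I do not anticipate a genuine obstacle; the only point needing a word of care is that the uniform bound on $A^h(v^h(\cdot))$ is being invoked, which is legitimate precisely because it was established inside the cited stability lemmas and so need not be redone here. One should also make sure to flag the (standard) fact that $\tau(h)\to0$, which is implicit in the consistency of the scheme.
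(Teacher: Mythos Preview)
Your argument is correct and is exactly the computation the paper has in mind: the paper does not actually write out a proof of this lemma, introducing it merely as ``an immediate consequence'' of the preceding stability lemmas, and your proposal supplies precisely the bookkeeping that justifies this claim.
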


Another immediate consequence of these lemmas is that $u^h$ satisfies a discrete Lipschitz bound uniformly in $h$.
\begin{lemma}[Discrete Lipschitz bounds]\label{lem:lipschitzDisc}
Under the hypotheses of either Lemma~\ref{lem:stabilitySmooth} or Lemma~\ref{lem:stabilityNonsmooth}, $u^h$ satisfies a local discrete Lipschitz bound of the form
\bq\label{eq:lipschitzDisc}
\abs{u^h(z)-u^h(y)} \leq L\norm{ z-y }
\eq
for all $y \in \Zf^h(z)$ and sufficiently small $h>0$ where $L\in\R$ is independent of $h$.
\end{lemma}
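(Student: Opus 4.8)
The plan is to read the Lipschitz bound directly off the Eikonal component of the scheme. By Lemma~\ref{lem:uhScheme}, $u^h$ solves
\[ G^h(x,u^h(x)-u^h(\cdot)) + \tau(h)u^h(x) + \sigma(h) = 0, \quad x\in\G^h, \]
with $\sigma(h)\to0$. Since $\|u^h\|_\infty\le M$ uniformly in $h$ (Lemma~\ref{lem:stabilitySmooth} or Lemma~\ref{lem:stabilityNonsmooth}) and $\tau(h),\sigma(h)\to0$, the identity $G^h(x,u^h(x)-u^h(\cdot)) = -\tau(h)u^h(x)-\sigma(h)$ forces $\abs{G^h(x,u^h(x)-u^h(\cdot))}\le1$ for all sufficiently small $h$. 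Using $G^h = \max\{F^h,E^h-R\}\ge E^h-R$, this yields $E^h(x,u^h(x)-u^h(\cdot)) \le R+1$ for every $x\in\G^h$ and all $h$ small.

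Next I would unwind the definition~\eqref{eq:eik} of $E^h$. Recalling that in geodesic normal coordinates a node is its own image, the bound above says that for every $x_0\in\G^h$ and every $y\in\Zf^h(x_0)$,
\[ u^h(x_0)-u^h(y) \le (R+1)\norm{x_0-y}. \]
This is only one-sided, so the crux is to upgrade it to the two-sided estimate~\eqref{eq:lipschitzDisc}. For this I would use symmetry of the stencil at the level of grid points: a point $y\in\Zf^h(x_0)$ is the normal-coordinate projection~\eqref{eq:neighbours} of some $x_i\in\G^h$ with $d_{\Sf}(x_0,x_i)\le r(h)$; by symmetry of the geodesic distance, $x_0\in B(x_i,r(h))$, hence the projection $w$ of $x_0$ into $\Tf_{x_i}$ lies in $\Zf^h(x_i)$. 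Applying the one-sided bound at the node $x_i$ gives $u^h(x_i)-u^h(x_0)\le(R+1)\norm{x_i-w}$. Because the normal coordinates preserve distances from the base point, $\norm{x_0-y} = d_{\Sf}(x_0,x_i)$ and $\norm{x_i-w} = d_{\Sf}(x_i,x_0) = d_{\Sf}(x_0,x_i)$, so the two inequalities carry the same length and combine to
\[ \abs{u^h(x_0)-u^h(x_i)} \le (R+1)\,d_{\Sf}(x_0,x_i) = (R+1)\norm{x_0-y}. \]
Since the projection does not change the grid value, $u^h(y) = u^h(x_i)$, and this is exactly~\eqref{eq:lipschitzDisc} with $L = R+1$.

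I expect the only delicate point to be the bookkeeping in the final step: checking that the neighbour relation is genuinely symmetric at the grid-point level --- which rests on the $B(x_0,r(h))$ definition of the stencil in~\eqref{eq:neighbours} together with symmetry of $d_{\Sf}$ --- and that the distance-preserving property of the normal coordinates makes both one-sided bounds refer to the identical length $\norm{x_0-y} = d_{\Sf}(x_0,x_i)$. Everything else follows at once from the uniform $L^{\infty}$ bound on $u^h$ and the structure $G^h = \max\{F^h,E^h-R\}$; no property of $F^h$ beyond $G^h\ge E^h-R$ is needed.
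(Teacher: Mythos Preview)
Your proof is correct and follows essentially the same route as the paper: bound $G^h$ via Lemma~\ref{lem:uhScheme} and the uniform $L^\infty$ estimate, extract a one-sided difference-quotient bound from the Eikonal component using $E^h - R \leq G^h$, and then symmetrize via the observation that the neighbour relation on $\G^h$ is symmetric (since $d_{\Sf}(x_0,x_i)\le r(h)$ iff $d_{\Sf}(x_i,x_0)\le r(h)$). Your handling of the symmetrization step is in fact more careful than the paper's, which simply asserts ``$z\in\Zf^h(y)$'' without distinguishing the tangent-plane projection from the underlying grid point or noting that both normal-coordinate distances equal $d_{\Sf}(x_0,x_i)$.
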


\begin{proof}
Note that $u^h$ satisfies~\eqref{eq:uhScheme2}.  For small enough $h$, we can assume $\tau(h), \abs{\sigma(h)} < 1$ and $\|u^h\|_\infty \leq M$.
By construction,
\[ E^h(z,u^h(z)-u^h(\cdot)) \leq G^h(z,u^h(z)-u^h(\cdot)) = -\tau(h)u^h(z) - \sigma(h) \leq M+1 \equiv L. \]
From the definition of $E^h$, we then have
\[ u^h(z)-u^h(y) \leq L\norm{z-y}, \quad y\in\Zf^h(z). \]

If $u^h(z)-u^h(y)\geq 0$ we are done.  Otherwise, we notice that $z \in \Zf^h(y)$ and we can use the fact that
\[ 0 < u^h(y)-u^h(z) \leq L\norm{y-z}, \]
which establishes the result.
\end{proof}

Because of our choice of geodesic normal coordinates, we can immediately extend this to a discrete Lipschitz bound for the function $u^h$ defined on $\G^h\subset\Sf$ in terms of geodesic distances on the sphere (rather than distances on the tangent plane).
\begin{lemma}[Discrete Lipschitz bounds on sphere]\label{lem:lipschitzDiscSphere}
Under the hypotheses of either Lemma~\ref{lem:stabilitySmooth} or Lemma~\ref{lem:stabilityNonsmooth}, $u^h$ satisfies a local discrete Lipschitz bound of the form
\bq\label{eq:lipschitzDiscSphere}
\abs{u^h(x)-u^h(y)} \leq L d_{\Sf}(x,y)
\eq
for all $x\in\G^h$, $y\in\G^h\cap B(x,r(h))$, and sufficiently small $h>0$. Here $L\in\R$ is independent of $h$.
\end{lemma}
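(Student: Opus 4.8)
The plan is to transfer the tangent-plane Lipschitz estimate of Lemma~\ref{lem:lipschitzDisc} to the sphere, exploiting the fact that geodesic normal coordinates centered at a point are an isometry when distances are measured from that point.

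First I would fix $x\in\G^h$ and work in the geodesic normal coordinates $v_x$ centered at $x$. Taking $d_{\Sf}\to0$ in the explicit formula~\eqref{eq:normalCoords}, and using $t\cot t\to1$ and $t\csc t\to1$, shows that $v_x(x)=x$, so $x$ is its own image and serves as the base point of the tangent plane $\Tf_x$. For any $y\in\G^h\cap B(x,r(h))$ its projection $z_y=v_x(y)$ belongs to $\Zf^h(x)$ by the construction~\eqref{eq:neighbours}, and the tangent-plane grid function $\tilde u^h = u^h\circ v_x^{-1}$ satisfies $\tilde u^h(x)=u^h(x)$ and $\tilde u^h(z_y)=u^h(y)$, where we identify the grid function $u^h$ on $\G^h$ with its push-forward onto the projected points.

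Next I would apply Lemma~\ref{lem:lipschitzDisc} at the grid point $x$ with the neighbor $z_y\in\Zf^h(x)$, which yields
\[ \abs{u^h(x)-u^h(y)} = \abs{\tilde u^h(x)-\tilde u^h(z_y)} \leq L\norm{ x - z_y }, \]
with $L$ independent of $h$. The sign argument in the proof of Lemma~\ref{lem:lipschitzDisc} requires $x$ to appear among the tangent-plane neighbors of $y$; this holds because $d_{\Sf}(x,y)<r(h)$ and the geodesic distance is symmetric, so $x\in\G^h\cap B(y,r(h))$ and hence $v_y(x)\in\Zf^h(y)$. Finally, the distance-preserving property of the normal coordinates gives $\norm{ x - z_y } = \norm{ v_x(x)-v_x(y) } = d_{\Sf}(x,y)$, which is valid for all sufficiently small $h$ since $r(h)\to0$ places every such $y$ inside the uniform-size neighborhood on which these coordinates are defined. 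Combining the two displays gives the claimed bound $\abs{u^h(x)-u^h(y)}\leq L\,d_{\Sf}(x,y)$.

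I do not expect any substantial obstacle here: the content is already contained in Lemma~\ref{lem:lipschitzDisc} together with the isometry property of geodesic normal coordinates at their center. The only points requiring care are the bookkeeping identification of $\tilde u^h$ with $u^h$, checking that $z_y$ is a legitimate neighbor so that Lemma~\ref{lem:lipschitzDisc} applies (including the symmetry used in its sign argument), and confirming that the identity $\norm{x-z_y}=d_{\Sf}(x,y)$ holds on the relevant range of $h$.
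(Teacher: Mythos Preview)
Your proposal is correct and matches the paper's approach: the paper states this lemma without proof, remarking only that the choice of geodesic normal coordinates allows one to ``immediately extend'' Lemma~\ref{lem:lipschitzDisc} from tangent-plane distances to geodesic distances. Your argument spells out exactly this reduction---identifying $\|x-z_y\|=d_{\Sf}(x,y)$ via the radial isometry of normal coordinates and checking the symmetry needed for the sign step---so the content is the same, just made explicit.
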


\subsection{Interpolation}
In order to establish convergence of the grid function $u^h$ to the solution of~\eqref{eq:modifiedPDE}, we will need to construct an appropriate (Lipschitz continuous) extension of it onto the sphere.

We start by considering linear interpolation of a grid function $w:\G^h\to\R$ onto the triangulated surface $T^h$ described in Hypothesis~\ref{hyp:grid}.  In particular, we want to show that the local discrete Lipschitz bounds~\eqref{eq:lipschitzDiscSphere} are inherited by the resulting piecewise linear interpolant.

\begin{lemma}[Interpolation onto triangulated surface]\label{lem:interpTri}
Let $\G^h$ be a point cloud satisfying Hypothesis~\ref{hyp:grid} and let $w:T^h\to\R$ be a piecewise linear function, linear on each triangle $t\in T^h$, that satisfies the local discrete Lipschitz bounds~\eqref{eq:lipschitzDiscSphere}.  Then there exists some $L\in\R$ (independent of $h$) such that for every $t\in T^h$ and $x,y\in T$, $w$ satisfies the Lipschitz bound $\abs{w(x)-w(y)} \leq L\norm{ x-y }$.
\end{lemma}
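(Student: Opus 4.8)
The plan is to reduce the claim to a uniform bound on the in-plane gradient of $w$ over a single triangle, and then to extract that bound from the vertex values using the maximum-angle condition in Hypothesis~\ref{hyp:grid}(b). Fix a triangle $t\in T^h$ with vertices $p_0,p_1,p_2$, labelled so that $p_0$ is opposite the longest edge. Since $w$ is affine on $t$, for any $x,y\in t$ we have $w(x)-w(y) = \nabla_t w\cdot(x-y)$, where $\nabla_t w$ is the gradient of the affine function $w$ in the two-dimensional plane containing $t$; hence $\abs{w(x)-w(y)}\le\norm{\nabla_t w}\norm{x-y}$, and it suffices to bound $\norm{\nabla_t w}$ by a constant multiple of the Lipschitz constant $L_0$ appearing in~\eqref{eq:lipschitzDiscSphere}, uniformly in $h$ and in $t$.

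First I would record a Euclidean discrete Lipschitz bound at the vertices. Because $\text{diam}(T^h)<r(h)$ by~\eqref{eq:r}, any two vertices of $t$ lie within the search radius of one another, so~\eqref{eq:lipschitzDiscSphere} gives $\abs{w(p_i)-w(p_j)}\le L_0\,d_{\Sf}(p_i,p_j)$. Since $\text{diam}(T^h)\to0$, the chord--arc estimate $d_{\Sf}(p_i,p_j)=2\arcsin(\norm{p_i-p_j}/2)\le 2\norm{p_i-p_j}$ holds for all sufficiently small $h$, whence $\abs{w(p_i)-w(p_j)}\le 2L_0\norm{p_i-p_j}$ for every pair of vertices of every triangle $t\in T^h$.

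Next I would bound $\norm{\nabla_t w}$. Set $\hat e_k = (p_k-p_0)/\norm{p_k-p_0}$ for $k=1,2$, so that $\abs{\nabla_t w\cdot\hat e_k} = \abs{w(p_k)-w(p_0)}/\norm{p_k-p_0}\le 2L_0$. Let $\alpha$ be the interior angle of $t$ at $p_0$, i.e.\ the angle between $\hat e_1$ and $\hat e_2$. As $p_0$ is opposite the longest edge, $\alpha$ is the largest of the three interior angles, so (the angles of the Euclidean triangle $t$ summing to $\pi$) $\alpha\ge\pi/3$; and $\alpha\le\gamma<\pi$ by Hypothesis~\ref{hyp:grid}(b). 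Consequently $\sin\alpha\ge s_0:=\min\{\sin(\pi/3),\sin\gamma\}>0$, a constant independent of $h$ and $t$. Writing $\nabla_t w = a\,\hat e_1 + b\,\hat e_1^{\perp}$ with $\hat e_1^{\perp}$ the unit vector chosen so that $\hat e_2 = \cos\alpha\,\hat e_1 + \sin\alpha\,\hat e_1^{\perp}$, we get $\abs a\le 2L_0$ from the first identity and $\abs b = \abs{\nabla_t w\cdot\hat e_2 - a\cos\alpha}/\sin\alpha\le 4L_0/s_0$ from the second, so $\norm{\nabla_t w}\le 2L_0\sqrt{1+4/s_0^2}$.

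Taking $L := 2L_0\sqrt{1+4/s_0^2}$, which depends only on $L_0$ and on $\gamma$, yields $\abs{w(x)-w(y)}\le L\norm{x-y}$ for all $x,y\in t$ and all $t\in T^h$. The only place the structural assumption on the grid enters, and the main obstacle, is the final step: without the maximum-angle bound $\gamma<\pi$, the angle $\alpha$ at the vertex opposite the longest edge could degenerate toward $\pi$, $\sin\alpha$ would fail to be bounded below, and the gradient of a piecewise linear interpolant on a sliver triangle could blow up even though its vertex values obey the discrete Lipschitz estimate.
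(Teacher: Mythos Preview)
Your proof is correct and follows essentially the same approach as the paper: both convert the spherical discrete Lipschitz bound at the vertices to a Euclidean one, choose the vertex with the largest interior angle, and use $\pi/3\le\alpha\le\gamma$ to bound the in-plane gradient of the affine function on each triangle. The only cosmetic difference is that the paper solves a $2\times2$ system for the coefficients of $q$ in the (non-orthogonal) edge-vector basis and ends with the bound $\abs{q}\le 2\tilde L/(1-\cos\gamma)$, whereas you bound the two directional derivatives along the edges and decompose in an orthonormal frame, arriving at the equivalent estimate via $\sin\alpha\ge\min\{\sin(\pi/3),\sin\gamma\}$.
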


\begin{proof}
First we consider the gradient of $w$ on a single triangle $t\in T^h$.  Let $t$ have the vertices $x_0, x_1, x_2 \in \G^h$.  Without loss of generality, we suppose that the maximal interior angle of $t$ occurs at the vertex $x_0$.  Since $\text{diam}(T^h) < r(h) \to 0$ as $h\to0$, there exists a constant $\tilde{L}$ (independent of $h$) such that
\[ \abs{w(x_i)-w(x_j)} \leq L d_{\Sf}(x_i,x_j) = 2L\sin^{-1}\left(\frac{\norm{ x_i-x_j }}{2}\right)\leq \tilde{L} \norm{ x_i-x_j } \]
for all $i,j\in\{0,1,2\}$.
That is, we also have discrete Lipschitz bounds on this triangle.  

For $x\in t$, we can express $w$ as
\[ w(x) = w(x_0) + q\cdot(x-x_0) \]
where $q$ is in the space spanned by $x_1-x_0$ and $x_2-x_0$; that is,
\[ q = q_1(x_1-x_0) + q_2(x_2-x_0) \]
for some $q_1, q_2\in\R$.
  We also denote by $\theta$ the angle between $x_1-x_0$ and $x_2-x_0$.  Note that $\theta \leq \gamma < \pi$ under Hypothesis~\ref{hyp:grid}.  
	
Then at the vertices of $t$ we can write
\[ w(x_i) = w(x_0) + q_i\norm{ x_i-x_0 }^2 + q_j\norm{ x_i-x_0 } \norm{ x_j-x_0 } \cos\theta, \quad i,j\in\{1,2\}, i \neq j. \]
Solving this system for the coefficients $q_1, q_2$, we find that
\[ q_1 = \frac{(w(x_2)-w(x_0))\norm{ x_1-x_0 } \cos\theta-(w(x_1)-w(x_0))\norm{ x_2-x_0 }}{\norm{ x_1-x_0 }^2\norm{ x_2-x_0}(\cos^2\theta-1)}. \]
Applying the discrete Lipschitz bound and since $\theta$ is the largest interior angle of the triangle $t$, we have $\dfrac{\pi}{3} \leq \theta \leq \gamma$, so
\begin{align*}
\abs{q_1} & \leq \frac{\tilde{L} \left( \norm{ x_1-x_0 } \norm{ x_2-x_0 } \abs{\cos\theta} + \norm{ x_1-x_0 } \norm{ x_2-x_0 } \abs{\cos\theta} \right)}{\norm{ x_1-x_0}^2\norm{ x_2-x_0}(1-\cos^2\theta)}\\
  &= \frac{\tilde{L}(\cos\theta+1)}{\norm{ x_1-x_0} (1-\cos^2\theta)}\\
	&\leq \frac{\tilde{L}}{\norm{ x_1-x_0 } (1-\cos\gamma)},
\end{align*}
with a similar bound on $q_2$.

Combining these, we find that
\[ \abs{q} \leq \abs{q_1}\norm{ x_1-x_0 } + \abs{q_2} \norm{ x_2-x_0 } \leq \frac{2\tilde{L}}{1-\cos\gamma}. \qedhere\]
%
\end{proof}

In particular, we can define $w^h:T^h\to\R$ as the unique piecewise linear interpolant of $u^h:\G^h\to\R$ that is linear on each triangle $t\in T^h$.  Notice that $w^h$ satisfies the Lipschitz bounds of Lemma~\ref{lem:interpTri}.  This allows us to produce a Lipschitz continuous interpolant of $u^h$ on the sphere by means of the closest point projection $\text{cp}:T^h\to{\Sf}$,
\bq\label{eq:cp}
\text{cp}(x) = \frac{x}{\norm{ x }}.
\eq
We remark that since $\text{diam}(T^h) \to 0$, this is a bijection for small enough $h>0$.  

This leads to the following extension of $u^h$ onto the sphere:
\bq\label{eq:uhSphere}
u^h(x) = w^h(\text{cp}^{-1}(x)).
\eq
That is, each triangle $t\in T^h$ is distorted to a spherical triangle (Figure~\ref{fig:closestpoint}).  Importantly, this does not significantly distort the gradient of the underlying function values, and uniform Lipschitz bounds are preserved.

\begin{figure}[htp]
\centering
\includegraphics[width=0.65\textwidth]{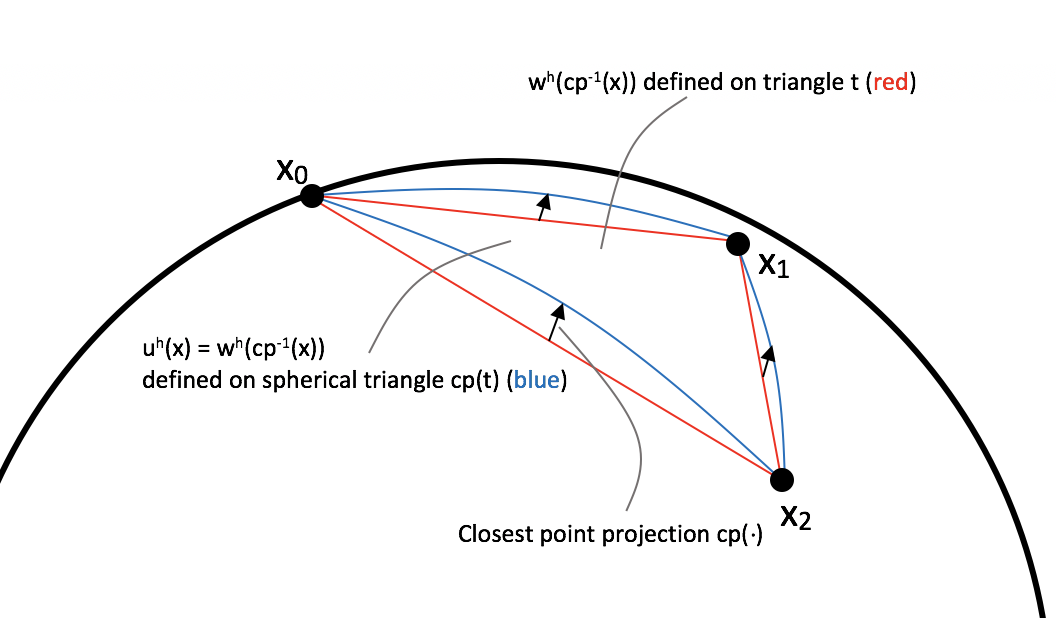}
\caption{Each triangle $t\in T^h$ is distorted via the inverse closest point map to a corresponding spherical triangle.}
\label{fig:closestpoint}
\end{figure}

\begin{lemma}[Lipschitz bounds on the sphere]\label{lem:lipschitzSphere}
Let $u^h:\Sf\to\R$ be as defined in~\eqref{eq:uhSphere}.  Under the hypotheses of either Lemma~\ref{lem:stabilitySmooth} or Lemma~\ref{lem:stabilityNonsmooth}, there exists some $L>0$ (independent of $h$) such that
\[ \abs{u^h(x)-u^h(y)} \leq L d_{\Sf}(x,y) \]
for all $x,y\in\Sf$.
\end{lemma}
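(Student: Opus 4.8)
The plan is to reduce the global estimate to the single‑triangle Lipschitz bound already proved in Lemma~\ref{lem:interpTri}, and then to globalize by chaining along a minimizing geodesic. First I would recall the set‑up: under the stated hypotheses $w^h:T^h\to\R$ is the piecewise linear interpolant of $u^h|_{\G^h}$ and, by Lemma~\ref{lem:interpTri}, satisfies $\abs{w^h(p)-w^h(q)}\le L\norm{p-q}$ whenever $p,q$ lie in a common (closed) triangle $t\in T^h$, with $L$ independent of $h$; moreover, for small $h$ the closest point map $\text{cp}:T^h\to\Sf$ is a bijection carrying each planar triangle $t$ onto a spherical triangle $\text{cp}(t)$, and $u^h=w^h\circ\text{cp}^{-1}$ on $\Sf$.

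\emph{Step 1 (one spherical triangle).} I would fix $t\in T^h$ and $x,y\in\text{cp}(t)$, and set $p=\text{cp}^{-1}(x)$, $q=\text{cp}^{-1}(y)\in t$. Since $t$ is a convex combination of unit vectors, $\norm{p},\norm{q}\le1$; writing $p=\mu_x x$, $q=\mu_y y$ with $\mu_x=\norm p,\ \mu_y=\norm q\in(0,1]$, one checks the identity
\[ \norm{p-q}^2 = (\mu_x-\mu_y)^2 + \mu_x\mu_y\norm{x-y}^2. \]
Let $\langle z,n\rangle=\delta$ be the plane of $t$, with $\delta\in(0,1]$ the distance from the origin; since the vertices of $t$ are unit vectors and $\text{diam}(t)<r(h)\to0$, we have $\delta\to1$ uniformly in $h$. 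From $\mu_x=\delta/\langle x,n\rangle$ and $\langle x,n\rangle\in[\delta,1]$ we get $\abs{\mu_x-\mu_y}=\delta\,\abs{\langle x-y,n\rangle}/(\langle x,n\rangle\langle y,n\rangle)\le\delta^{-1}\norm{x-y}$. Hence $\norm{p-q}\le(1+\delta^{-2})^{1/2}\norm{x-y}\le C\norm{x-y}\le C\,d_{\Sf}(x,y)$ with $C$ independent of $h$ (using $\norm{x-y}\le d_{\Sf}(x,y)$), and therefore $\abs{u^h(x)-u^h(y)}=\abs{w^h(p)-w^h(q)}\le LC\,d_{\Sf}(x,y)$ whenever $x,y$ lie in a common spherical triangle.

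\emph{Step 2 (globalizing).} For arbitrary $x,y\in\Sf$ I would join them by a minimizing geodesic $\Gamma$ of length $d_{\Sf}(x,y)$. For each $t\in T^h$ the set $\Gamma\cap\text{cp}(t)$ is closed, and — since for small $h$ the spherical triangles $\text{cp}(t)$ are geodesically convex — connected; as finitely many of these sets cover $\Gamma$, one extracts an ordered finite partition of $\Gamma$ into closed sub‑arcs $\Gamma_1,\dots,\Gamma_N$, with consecutive endpoints $x=z_0,z_1,\dots,z_N=y$ and each $\Gamma_i\subset\text{cp}(t_i)$. Each $\Gamma_i$, being a sub‑arc of a minimizing geodesic, has length $d_{\Sf}(z_{i-1},z_i)$, and $\sum_i \text{length}(\Gamma_i)=d_{\Sf}(x,y)$. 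Applying Step 1 to each $\Gamma_i$ and using the triangle inequality,
\[ \abs{u^h(x)-u^h(y)} \le \sum_{i=1}^{N}\abs{u^h(z_{i-1})-u^h(z_i)} \le LC\sum_{i=1}^N d_{\Sf}(z_{i-1},z_i) = LC\,d_{\Sf}(x,y), \]
which is the claim with $L$ replaced by the $h$‑independent constant $LC$.

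The hard part is the geometric distortion estimate in Step 1: one must confirm that the inverse closest point map expands geodesic distances by at most a factor bounded uniformly in $h$. This is exactly where $\text{diam}(T^h)\to0$ is essential, since it forces the chord planes of the triangles to be nearly tangent to the sphere ($\delta\to1$), keeping $C$ bounded (in fact $C\to\sqrt2$). The partition argument of Step 2 is routine once geodesic convexity of small spherical triangles is recorded.
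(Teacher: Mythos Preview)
Your proof is correct and follows the same two-step strategy as the paper: first bound the distortion of $\text{cp}^{-1}$ on a single triangle, then globalize. The paper carries out Step~1 by choosing coordinates so that $t$ lies in the plane $x_3=c$ and computing the Jacobian of $\text{cp}^{-1}$ explicitly (showing it converges uniformly to the identity), whereas your coordinate-free computation via the identity $\norm{p-q}^2=(\mu_x-\mu_y)^2+\mu_x\mu_y\norm{x-y}^2$ is a clean alternative that yields the same bound; for Step~2 the paper simply asserts that the global Lipschitz constant of a continuous function equals the supremum of the local ones, while your explicit geodesic-chaining argument spells this out.
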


\begin{proof}
Let us first consider a fixed triangle $t\in T^h$ and choose any $x,y\in\Sf$ such that $\text{cp}^{-1}(x), \text{cp}^{-1}(y) \in t$.  From Lemma~\ref{lem:interpTri}, we can immediately see that there is some $L>0$ (independent of $h$ and the particular choice of triangle) such that
\bq\label{eq:uhLip} \abs{u^h(x)-u^h(y)} = \abs{w^h(\text{cp}^{-1}(x))-w^h(\cp^{-1}(y))} \leq L\norm{ \cp^{-1}(x)-\cp^{-1}(y) }. \eq

Now we choose a coordinate system such that the triangle $t$ lies in the plane $x_3 = c$.  We recall that $\text{diam}(t) \leq \text{diam}(T^h)\to0$ and the vertices of $t$ lie on the unit sphere $\Sf$.  Thus there exists some $\eta = \bO(\text{diam}(T^h))$ such that 
\bq\label{eq:triValues} \abs{z_1}, \abs{z_2} \leq \eta, \quad 0 \leq 1-z_3 = 1-c \leq \eta \eq
for any $z \in t$.  (See also Figure~\ref{fig:closestpoint}).

In this coordinate system, we can express the closest point function and its inverse as
\[\cp(z) = \frac{(z_1,z_2,c)}{\sqrt{z_1^2+z_2^2+c^2}}, \quad \cp^{-1}(x) = \left(\frac{cx_1}{\sqrt{1-x_1^2-x_2^2}},\frac{cx_2}{\sqrt{1-x_1^2-x_2^2}},c\right).  \]
Notice that we can interpret the first two components of $\cp^{-1}$ as a transformation from $t\in\R^2$ to $\R^2$.  The Jacobian of this transformation is given by
\[ \nabla \tilde{\cp}^{-1}(x) = \left(\begin{tabular}{cc} $c(1-x_2^2)(1-x_1^2-x_2^2)^{-3/2}$ & $cx_1x_2(1-x_1^2-x_2^2)^{-3/2}$ \\ $cx_1x_2(1-x_1^2-x_2^2)^{-3/2}$ & $c(1-x_1^2)(1-x_1^2-x_2^2)^{-3/2}$\end{tabular}\right), \]
which converges uniformly to the identity matrix as $h\to0$ given the estimates on the values of $(x_1,x_2)\in t$ from~\eqref{eq:triValues}.  Thus for sufficiently small $h>0$, we have that $\|\nabla\tilde{\cp}^{-1}(x)\| \leq 2$ for all $(x_1,x_2)\in t$.

This leads to a uniform Lipschitz bound on the inverse closest point map, interpreted as a function on $\R^2$.  For $x,y\in\Sf$ and sufficiently small $h>0$ we then obtain the estimates
\begin{align*}
\norm{ \cp^{-1}(x)-\cp^{-1}(y) } &= \norm{ \tilde{\cp}^{-1}(x)-\tilde{\cp}^{-1}(y) }\\
  &\leq 2\norm{ (x_1,x_2)-(y_1,y_2) }\\
  &\leq 2\norm{ x-y }\\
	&\leq 4d_{\Sf}(x,y).
\end{align*} 
Substituting this into~\eqref{eq:uhLip} yields the desired uniform Lipschitz bounds on any spherical triangle $\cp(t)$.

Since $u^h$ is continuous, its Lipschitz constant is the maximal Lipchitz constant over any spherical triangle, which can be bounded independent of $h$.
\end{proof}

\subsection{Convergence Theorem}
We are now prepared to complete the proof of convergence of the numerical approach outlined in~\autoref{sec:discrete}.
We begin with two lemmas pertaining to uniformly convergent sequences $u^{h_n}$.

\begin{lemma}\label{lem:meanZero}
Let $u^h$ be defined by the schemes~\eqref{eq:vhScheme}-\eqref{eq:uh} and~\eqref{eq:uhSphere} under the hypotheses of either Lemma~\ref{lem:stabilitySmooth} or~\ref{lem:stabilityNonsmooth}.  Suppose that $h_n\to0$ is any sequence such that $u^{h_n}$ converges uniformly to a continuous function $U$. Then $\langle U \rangle = 0$.
\end{lemma}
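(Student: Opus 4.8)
The plan is to exploit the specific construction of $u^h$ as a shifted version of $v^h$, namely $u^h(x) = v^h(x) - A^h(v^h(\cdot))$ on the grid $\G^h$, together with the consistency and linearity hypotheses on the averaging scheme $A^h$. First I would apply the linear averaging operator $A^h$ to both sides of the definition~\eqref{eq:uh}. Using linearity of $A^h$ (Hypothesis~\ref{hyp:schemes}(d)) and the fact that $A^h(c) = c$ for constants, I get $A^h(u^h(\cdot)) = A^h(v^h(\cdot)) - A^h(v^h(\cdot)) = 0$ for every $h$ in the sequence. So the discrete averages of the grid functions $u^{h_n}$ vanish identically.

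Next I would pass to the limit. The subtlety is that $A^h$ is a discrete operator acting on grid functions, while $\langle \cdot \rangle$ is the continuous averaging operator~\eqref{eq:average} acting on functions on $\Sf$. Here I would invoke Hypothesis~\ref{hyp:schemes}(c): $A^h$ is consistent with $\langle \cdot \rangle$ on all Lipschitz continuous functions. Since $u^{h_n} \to U$ uniformly and (by Lemma~\ref{lem:lipschitzSphere}) the $u^{h_n}$ are uniformly Lipschitz, the limit $U$ is Lipschitz continuous, and I can compare $A^{h_n}(U(\cdot))$ (meaning $A^{h_n}$ applied to the restriction of $U$ to $\G^{h_n}$) with $\langle U \rangle$: consistency gives $A^{h_n}(U(\cdot)) \to \langle U \rangle$ as $n\to\infty$. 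It then remains to show $A^{h_n}(u^{h_n}(\cdot)) - A^{h_n}(U(\cdot)) \to 0$; by linearity this equals $A^{h_n}((u^{h_n} - U)(\cdot))$, and since $A^{h_n}$ is an averaging-type operator it should not increase the sup-norm (or at worst increases it by a bounded factor), so this difference is controlled by $\|u^{h_n} - U\|_\infty \to 0$. Combining, $0 = A^{h_n}(u^{h_n}(\cdot)) \to \langle U \rangle$, hence $\langle U \rangle = 0$.

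The main obstacle — really the only nontrivial point — is making the last limiting argument rigorous given that the paper's Hypothesis~\ref{hyp:schemes} does not explicitly state that $A^h$ is bounded as an operator on $\ell^\infty(\G^h)$ uniformly in $h$. I would address this either by noting that any reasonable consistent linear averaging scheme with $A^h(c)=c$ is in fact a convex combination (nonnegative weights summing to one), so $\|A^h w\|_\infty \le \|w\|_\infty$ automatically, or by observing that one can instead apply consistency directly: since $u^{h_n}$ converges uniformly to the Lipschitz function $U$, for any $\epsilon > 0$ one has $\|u^{h_n} - U\|_\infty < \epsilon$ for large $n$, so $u^{h_n}$ restricted to $\G^{h_n}$ is sandwiched between $U - \epsilon$ and $U + \epsilon$ on the grid; applying $A^{h_n}$ (which, being consistent with the average and linear with $A^h(c)=c$, respects this ordering up to the consistency error on $U$) gives $|A^{h_n}(u^{h_n}(\cdot)) - \langle U\rangle| \le \epsilon + o(1)$, and letting $n\to\infty$ then $\epsilon \to 0$ finishes the argument. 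Everything else is a one-line computation.
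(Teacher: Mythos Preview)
Your proposal is correct and reaches the same conclusion, but the paper takes a slightly more direct route that sidesteps exactly the obstacle you flag. Like you, the paper starts from $A^{h_n}(u^{h_n})=0$. But instead of applying consistency of $A^h$ to the \emph{limit} function $U$ and then having to control $A^{h_n}(u^{h_n}-U)$ via some boundedness or positivity of $A^h$, the paper applies consistency directly to the functions $u^{h_n}$ themselves. This is legitimate because Lemma~\ref{lem:lipschitzSphere} gives the $u^{h_n}$ a uniform Lipschitz constant, so Hypothesis~\ref{hyp:schemes}(c) yields $\abs{\langle u^{h_n}\rangle - A^{h_n}(u^{h_n})}\le \tau(h_n)$, hence $\langle u^{h_n}\rangle\to 0$. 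The passage from $\langle u^{h_n}\rangle$ to $\langle U\rangle$ is then handled on the \emph{continuous} side by the Dominated Convergence Theorem (uniform convergence of bounded functions on a compact set). Your approach works too, but it requires either an $\ell^\infty$-boundedness or a monotonicity assumption on $A^h$ that the stated hypotheses do not quite guarantee; the paper's ordering of the argument avoids needing that.
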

\begin{proof}
We recall first that $A^{h_n}(u^{h_n}) = 0$ by design~\eqref{eq:uh}.

Since $A^h$ is consistent on all Lipschitz functions and $u^{h_n}$ enjoy uniform Lipschitz bounds, we can also say that
\[ \abs{\langle u^{h_n} \rangle} = \abs{\langle u^{h_n} \rangle - A^{h_n}(u^{h_n})} \leq \tau(h_n). \]

Since convergence is uniform, the Dominated Convergence Theorem yields
\[ \langle U \rangle = \lim\limits_{n\to\infty}\langle u^{h_n} \rangle = 0. \qedhere  \]
\end{proof}

\begin{lemma}\label{lem:viscosity}
Let $u^h$ be defined by the schemes~\eqref{eq:vhScheme}-\eqref{eq:uh} and~\eqref{eq:uhSphere} under the hypotheses of either Lemma~\ref{lem:stabilitySmooth} or~\ref{lem:stabilityNonsmooth}.  Suppose that $h_n\to0$ is any sequence such that $u^{h_n}$ converges uniformly to a continuous function $U$. Then $U$ is a viscosity solution of~\eqref{eq:modifiedPDE}.
\end{lemma}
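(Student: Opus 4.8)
The plan is to use the modified Barles--Souganidis argument adapted to the sphere, as alluded to in the Background section. The scheme satisfied by $u^h$ is not quite $G^h = 0$ but rather the perturbed scheme~\eqref{eq:uhScheme2}, $G^h(x,u^h(x)-u^h(\cdot)) + \tau(h)u^h(x) + \sigma(h) = 0$, with $\tau(h),\sigma(h)\to0$ and $\|u^h\|_\infty \leq M$ uniformly (Lemma~\ref{lem:uhScheme}, Lemma~\ref{lem:stabilitySmooth}/\ref{lem:stabilityNonsmooth}). Since $G^h$ is consistent with~\eqref{eq:modifiedPDE} and monotone, and the extra terms $\tau(h)u^h(x)+\sigma(h)$ are uniformly bounded by a quantity tending to $0$, this perturbed scheme is still consistent with the same PDE in the sense of Definition~\ref{def:consistency}. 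So the task reduces to the standard verification that the uniform limit of solutions of a consistent, monotone scheme is a viscosity solution.

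First I would show $U$ is a viscosity subsolution. Fix $x_0\in\Sf$ and $\phi\in C^\infty(\Sf)\cap\Ef(G) = C^\infty(\Sf)$ (recall $\Ef(G)$ is all of $C^2$, which removes the usual headache of checking test functions lie in the ellipticity cone) such that $U-\phi$ has a strict local maximum at $x_0$. By uniform convergence of $u^{h_n}\to U$, there exist points $x_n\in\G^{h_n}$ with $x_n\to x_0$ at which $u^{h_n}-\phi$ attains a local maximum over $\G^{h_n}\cap B(x_0,r(h_n))$; write $\xi_n = (u^{h_n}-\phi)(x_n)\to 0$. At $x_n$ one has $u^{h_n}(x_n)-u^{h_n}(y) \geq \phi(x_n)-\phi(y)$ for all neighbors $y$, i.e. $u^{h_n}(x_n)-u^{h_n}(\cdot) \geq \phi(x_n)-\phi(\cdot)$ componentwise on the stencil. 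Monotonicity of $G^{h_n}$ (non-decreasing in the last argument) then gives
\[
G^{h_n}(x_n,\phi(x_n)-\phi(\cdot)) \leq G^{h_n}(x_n,u^{h_n}(x_n)-u^{h_n}(\cdot)) = -\tau(h_n)u^{h_n}(x_n)-\sigma(h_n).
\]
Taking $\limsup_{n\to\infty}$, using $|\tau(h_n)u^{h_n}(x_n)+\sigma(h_n)|\leq (M+1)\tau(h_n)+|\sigma(h_n)|\to0$ on the right and the consistency inequality (Definition~\ref{def:consistency}, with $z=x_n\to x_0$, $y=x_n\to x_0$, $\xi=\xi_n\to 0$) on the left, yields $G_*(x_0,\nabla_\Sf\phi(x_0),D^2_\Sf\phi(x_0)) \leq 0$ (here, using $\phi(x_n)\to\phi(x_0)$; note $G$ does not depend on $u$ so the $\xi_n$ shift is harmless). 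Since $G$ is continuous in its arguments on $\Ef(G)$, $G_* = G$ and $U$ is a subsolution. The supersolution property is symmetric, using a strict local minimum, the reversed inequalities, and the other half of consistency. The standard reduction from strict to non-strict extrema (perturb $\phi$ by $\pm\varepsilon d_\Sf(x,x_0)^2$ and let $\varepsilon\to0$) completes both cases.

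The main obstacle is ensuring the localization step is valid on the manifold with the non-local stencil: one must confirm that for large $n$ the discrete local maximum of $u^{h_n}-\phi$ over $\G^{h_n}\cap B(x_0,r(h_n))$ is attained at an interior stencil point $x_n$ near $x_0$ (not on the artificial boundary of the search ball) and that $x_n\to x_0$. This follows because $U-\phi$ has a strict maximum at $x_0$, $u^{h_n}\to U$ uniformly, and $\G^{h_n}$ becomes dense (every ball of radius $h_n$ meets $\G^{h_n}$); the strict maximum forces the discrete maximizers into any fixed neighborhood of $x_0$, hence eventually into $B(x_0,r(h_n)/2)$, well inside the stencil radius since $r(h_n)\to 0$ but the maximizer is confined to a shrinking neighborhood. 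One also uses that geodesic normal coordinates and the closest-point interpolation do not distort $\phi$ (they agree with the intrinsic gradient and Hessian at the base point, by the Lemma following~\eqref{eq:tangentFunction}), so that evaluating $G^{h_n}$ — which is built on the tangent-plane reduction — against the intrinsic test function $\phi$ is consistent with $G(x_0,\nabla_\Sf\phi(x_0),D^2_\Sf\phi(x_0))$. Everything else is the routine Barles--Souganidis bookkeeping.
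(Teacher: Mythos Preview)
Your proposal follows essentially the same Barles--Souganidis strategy as the paper: use Lemma~\ref{lem:uhScheme} to write the perturbed scheme for $u^h$, pick a smooth test function touching from above at a strict local max, localize discrete maximizers, apply monotonicity to replace $u^{h_n}$ by $\phi$ in $G^{h_n}$, and pass to the limit via consistency.

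There is one technical slip in your localization. You maximize $u^{h_n}-\phi$ over the \emph{shrinking} ball $\G^{h_n}\cap B(x_0,r(h_n))$ and then try to argue the maximizer $x_n$ lands in $B(x_0,r(h_n)/2)$ so that the stencil at $x_n$ stays inside the ball where the max was taken. But a strict local maximum of $U-\phi$ at $x_0$ only forces discrete maximizers into any \emph{fixed} neighborhood of $x_0$; it cannot force them into a ball whose radius goes to zero. The claim ``into any fixed neighborhood, hence eventually into $B(x_0,r(h_n)/2)$'' is a non sequitur precisely because $r(h_n)\to0$. Moreover, even if $x_n\in B(x_0,r(h_n)/2)$, the stencil at $x_n$ has radius $r(h_n)$, so it is still not contained in $B(x_0,r(h_n))$.

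The repair is easy and is what the paper does: take $z_n$ to maximize $u^{h_n}-\phi$ over \emph{all} of $\G^{h_n}$ (modifying $\phi$ away from $x_0$, if needed, so that the strict local max is global---a standard reduction), or over a \emph{fixed} closed ball $\overline{B(x_0,\delta)}$. Uniform convergence and strictness then give $z_n\to x_0$, and since $r(h_n)\to0$ the stencil $B(z_n,r(h_n))$ is eventually contained in $B(x_0,\delta)$, so the inequality $u^{h_n}(z_n)-u^{h_n}(\cdot)\geq\phi(z_n)-\phi(\cdot)$ holds on the whole stencil. With that adjustment your argument matches the paper's.
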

\begin{proof}
Here we follow the usual approach of the Barles-Souganidis framework, modified for the setting where the limit function is known to be continuous.  Recall that $u^h$ satisfies the scheme
\[ G^h(x,u(x)-u(\cdot)) + \tau(h)u^h(x) + \sigma(h) = 0 \]
where $\sigma(h)\to0$ as $h\to0$ (Lemma~\ref{lem:uhScheme}).  Moreover, there exists some $M\in\R$ such that $\|u^h\|_\infty \leq M$ for all sufficiently small $h>0$ (Lemmas~\ref{lem:stabilitySmooth}-\ref{lem:stabilityNonsmooth}).

Consider any $x_0\in \Sf$ and $\phi\in C^\infty$ such that $U-\phi$ has a strict local maximum at $x_0$ with $U(x_0) = \phi(x_0)$.  Because $u^h$ and the limit function $U$ are continuous, strict maxima are stable and there exists a sequence $z_n\in\G^h\cap \Sf$ such that
\[ z_n\to x_0,  \quad u^{h_n}(z_n)\to U(x_0) \]
where $z_n$ maximizes $u^{h_n}(x)-\phi(x)$ over points $x\in\G^h\cap\Sf$.


From the definition of $z_n$ as a maximizer of $u^{h_n}-\phi$, we also observe that
\[ u^{h_n}(z_n)-u^{h_n}(\cdot) \geq \phi(z_n)-\phi(\cdot). \]

We let $G(\nabla u(x),D^2u(x))$ denote the PDE operator~\eqref{eq:modifiedPDE}.  Since $u^{h_n}$ is a solution of the scheme, we can use monotonicity to calculate
\begin{align*} 0 &= G^{h_n}(z_n,u^{h_n}(z_n)-u^{h_n}(\cdot))+ \tau(h_n)u^{h_n}(z_n) + \sigma(h_n)\\
  &\geq G^{h_n}(z_n,\phi(z_n)-\phi(\cdot)) - M\tau(h_n) + \sigma(h_n). 
\end{align*}

As the scheme is consistent, we conclude that
\begin{align*} 0 &\geq \liminf\limits_{n\to\infty}\left(G^{h_n}(z_n,\phi(z_n)-\phi(\cdot)) - M\tau(h_n)+\sigma(h_n)\right)\\ &\geq G_*(x_0,\nabla\phi(x_0),D^2\phi(x_0)). \end{align*}
Thus $U$ is a subsolution of~\eqref{eq:modifiedPDE}. 

An identical argument shows that $U$ is a supersolution and therefore a viscosity solution.
\end{proof}

These lemmas lead immediately to our main convergence theorem.  The requirements on the schemes for the smooth and non-smooth setting are slightly different, but the proofs of the following two theorems are the same.
\begin{theorem}[Convergence (smooth case)]\label{thm:convergenceSmooth}
Consider the schemes~\eqref{eq:vhScheme}-\eqref{eq:uh} and~\eqref{eq:uhSphere} under the conditions of Hypothesis~\ref{hyp:Smooth}, \ref{hyp:grid}, and~\ref{hyp:schemes}.  Suppose also that~\eqref{eq:modifiedPDE} has a unique mean-zero $C^{0,1}$ solution.  Then $u^h$ converges uniformly to the unique smooth solution of~\eqref{eq:OTPDE}.
\end{theorem}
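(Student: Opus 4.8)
The plan is to run the classical Barles--Souganidis endgame: use the uniform Lipschitz bounds to extract uniformly convergent subsequences, use stability of viscosity solutions under uniform limits together with the mean-zero normalization to pin down the limit by uniqueness, and then upgrade subsequential convergence to convergence of the full family.

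First I would record compactness. Lemma~\ref{lem:stabilitySmooth} gives existence, uniqueness, and a uniform bound $\|u^h\|_\infty \leq M$, and Lemma~\ref{lem:lipschitzSphere} gives a uniform (in $h$) Lipschitz bound for the interpolated function $u^h$ on $\Sf$. Since $\Sf$ is compact, Arzel\`a--Ascoli shows the family $\{u^h\}$ is precompact in $C(\Sf)$: every sequence $h_n\to 0$ has a subsequence $h_{n_k}$ along which $u^{h_{n_k}}$ converges uniformly to some $U\in C^{0,1}(\Sf)$. For any such limit, Lemma~\ref{lem:viscosity} shows $U$ is a viscosity solution of~\eqref{eq:modifiedPDE}, and Lemma~\ref{lem:meanZero} shows $\langle U\rangle = 0$. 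By the standing assumption that~\eqref{eq:modifiedPDE} has a \emph{unique} mean-zero $C^{0,1}$ viscosity solution, the limit $U$ does not depend on the chosen subsequence; denote it $u_\infty$.

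Next I would identify $u_\infty$ with the classical solution. By Theorem~\ref{thm:regularity}, the optimal transport problem admits a $c$-convex solution $u\in C^3(\Sf)$; subtracting its average we may take $\langle u\rangle = 0$. By Theorem~\ref{thm:equivalenceSmooth}, this $c$-convex $C^2$ function is a solution of~\eqref{eq:modifiedPDE}, and it is mean-zero and $C^{0,1}$, so the uniqueness hypothesis forces $u_\infty = u$. Hence the common subsequential limit is precisely the smooth mean-zero solution of~\eqref{eq:OTPDE}. Finally, a standard argument promotes this to convergence of the whole family: if $u^h \not\to u_\infty$ uniformly, there would exist $\varepsilon > 0$ and $h_n\to 0$ with $\|u^{h_n}-u_\infty\|_\infty \geq \varepsilon$; but the steps above yield a further subsequence converging uniformly to $u_\infty$, a contradiction. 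Therefore $u^h \to u_\infty$ uniformly on $\Sf$.

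The main obstacle is not in this assembly, which is routine once the preceding lemmas are in hand, but in those ingredients themselves: securing the strong stability estimates (uniform $L^\infty$ \emph{and} Lipschitz bounds) for a PDE that has neither a comparison principle nor a unique solution, which is exactly the role played by the gradient-constraint modification in~\eqref{eq:modifiedPDE} and the averaging shift in~\eqref{eq:vhScheme}--\eqref{eq:uh}, together with the equivalence of the modified and original equations for $c$-convex solutions (Theorem~\ref{thm:equivalenceSmooth}). A secondary point to handle carefully is that the uniqueness assumption is stated for $C^{0,1}$ solutions of the modified equation, so one must verify that the a~priori $C^3$ solution of~\eqref{eq:OTPDE} indeed lands in this class of competitors, which is immediate from $C^3\subset C^{0,1}$ and Theorem~\ref{thm:equivalenceSmooth}.
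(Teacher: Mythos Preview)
Your proposal is correct and follows essentially the same route as the paper: uniform $L^\infty$ and Lipschitz bounds (Lemmas~\ref{lem:stabilitySmooth} and~\ref{lem:lipschitzSphere}) feed Arzel\`a--Ascoli, the limit is identified as a mean-zero viscosity solution of~\eqref{eq:modifiedPDE} via Lemmas~\ref{lem:meanZero}--\ref{lem:viscosity}, and uniqueness together with Theorem~\ref{thm:equivalenceSmooth} forces it to be the smooth solution of~\eqref{eq:OTPDE}. Your version is somewhat more explicit than the paper in spelling out the identification step and the subsequence-of-a-subsequence argument, but the substance is the same.
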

\begin{theorem}[Convergence (non-smooth case)]\label{thm:convergenceNonsmooth}
Consider the schemes~\eqref{eq:vhScheme}-\eqref{eq:uh} and~\eqref{eq:uhSphere} under the conditions of Hypothesis~\ref{hyp:Nonsmooth}, \ref{hyp:grid}, and~\ref{hyp:schemes}.  Suppose also that $F^h$ is an underestimating scheme and that~\eqref{eq:modifiedPDE} has a unique mean-zero $C^{0,1}$ solution.  Then $u^h$ converges uniformly to the unique Lipschitz continuous solution of~\eqref{eq:OTPDE}.
\end{theorem}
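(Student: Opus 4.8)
The plan is to assemble the stability, consistency, and uniqueness ingredients already established into a compactness argument in the spirit of Barles--Souganidis, adapted to the setting in which the limiting PDE lacks a comparison principle but is nonetheless known to have a unique mean-zero solution. The only role played by the hypothesis that~\eqref{eq:modifiedPDE} has a unique mean-zero $C^{0,1}$ solution is to substitute for the comparison principle in pinning down the limit.

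First I would observe that by Lemma~\ref{lem:stabilityNonsmooth} the grid functions $u^h$ are uniformly bounded in $L^\infty$, and by Lemma~\ref{lem:lipschitzSphere} the interpolants $u^h:\Sf\to\R$ defined via~\eqref{eq:uhSphere} are uniformly Lipschitz continuous, with constants independent of $h$. Since $\Sf$ is compact, the Arzel\`a--Ascoli theorem implies that every sequence $h_n\to0$ admits a subsequence (not relabeled) along which $u^{h_n}$ converges uniformly to some continuous function $U:\Sf\to\R$; the uniform Lipschitz bound passes to the limit, so $U\in C^{0,1}(\Sf)$.

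Next I would identify the limit and then connect it to the original problem. By Lemma~\ref{lem:viscosity}, any such uniform limit $U$ is a viscosity solution of the modified PDE~\eqref{eq:modifiedPDE}, and by Lemma~\ref{lem:meanZero} it satisfies $\langle U\rangle = 0$. Under the standing assumption that~\eqref{eq:modifiedPDE} has a unique mean-zero $C^{0,1}$ solution, this determines $U$ uniquely, independently of the sequence and subsequence chosen; a standard subsequence argument then upgrades subsequential convergence to uniform convergence of the full family $u^h$ as $h\to0$. To recognize $U$ as the transport potential, recall that by Theorem~\ref{thm:regularity} the PDE~\eqref{eq:OTPDE} has a $c$-convex solution $u\in C^1(\Sf)\subset C^{0,1}(\Sf)$, which we normalize to have mean zero; by Theorem~\ref{thm:equivalenceNonsmooth} this $u$ is also a viscosity solution of~\eqref{eq:modifiedPDE}, so uniqueness forces $U=u$ and hence $u^h\to u$ uniformly. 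The smooth case (Theorem~\ref{thm:convergenceSmooth}) is proved in exactly the same way, using Lemma~\ref{lem:stabilitySmooth}, Theorem~\ref{thm:equivalenceSmooth}, and the $C^3$ regularity from Theorem~\ref{thm:regularity} in place of their non-smooth counterparts.

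The main obstacle is not the compactness argument itself, whose pieces are all in hand, but the fact that the argument must route through the modified operator $G$ of~\eqref{eq:modifiedPDE} rather than the original operator~\eqref{eq:OTPDE}: the scheme~\eqref{eq:approx2} is consistent only with $G$, since $G$ is the globally elliptic extension amenable to the monotone-scheme machinery, and a priori the maximum operator in $G$ admits spurious subsolutions. The delicate step one must not gloss over is therefore the combination of the equivalence results (Theorems~\ref{thm:equivalenceSmooth}--\ref{thm:equivalenceNonsmooth}) with the uniqueness hypothesis on~\eqref{eq:modifiedPDE}: together these are precisely what guarantee that the uniform limit of the scheme is the genuine $c$-convex transport potential and not some extraneous solution of the relaxed equation.
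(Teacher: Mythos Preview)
Your proposal is correct and follows essentially the same approach as the paper: uniform $L^\infty$ and Lipschitz bounds (Lemmas~\ref{lem:stabilityNonsmooth} and~\ref{lem:lipschitzSphere}), Arzel\`a--Ascoli, identification of any subsequential limit as a mean-zero viscosity solution of~\eqref{eq:modifiedPDE} via Lemmas~\ref{lem:meanZero}--\ref{lem:viscosity}, and then the uniqueness hypothesis combined with Theorem~\ref{thm:equivalenceNonsmooth} to force $U=u$. If anything, you spell out the identification step more carefully than the paper does, making explicit that the true transport potential $u$ is known to solve~\eqref{eq:modifiedPDE} (via regularity and the equivalence theorem) and that uniqueness of mean-zero solutions of~\eqref{eq:modifiedPDE} is what pins $U$ to $u$.
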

\begin{proof}
Consider any sequence $h_n\to0$.  Notice that the function $u^{h_n}$ is uniformly bounded (Lemmas~\ref{lem:stabilitySmooth}-\ref{lem:stabilityNonsmooth}) and enjoys uniform Lipschitz bounds (Lemma~\ref{lem:lipschitzSphere}).  Then by the Arzel\`{a}-Ascoli theorem there exists a subsequence $h_{n_k}$ and a continuous function $U$ such that $u^{h_{n_k}}$ converges uniformly to $U$, where $U$ has Lipschitz constant $L$.

From Lemmas~\ref{lem:meanZero}-\ref{lem:viscosity}, $U$ is a mean-zero viscosity solution of~\eqref{eq:modifiedPDE}.  Then by Theorems~\ref{thm:equivalenceSmooth} and~\ref{thm:equivalenceNonsmooth}, $U$ must agree with the unique mean-zero solution $u$ of~\eqref{eq:OTPDE}.

Since this holds for any sequence $h_n$, we conclude that $u^h$ converges uniformly to $u$.
\end{proof}

\section{Conclusion}\label{sec:conclusion}
We have constructed a convergence framework for numerically solving the optimal transport problem on the sphere.
This is done via a \MA-type PDE formulation.  This framework applies to both the squared geodesic cost, which has direct applications to moving-mesh methods on the sphere which have recently been used in meteorology problems, and to the logarithmic cost coming from the reflector antenna problem.

Our convergence framework is inspired by the Barles-Souganidis framework, but requires considerable consideration of the spherical geometry and the fact that there is no comparison principle for this PDE.  The convergent result applies to very general meshes and point clouds on the sphere, which need only satisfy a very mild regularity condition.  The convergence framework applies very generally to consistent, monotone approximation schemes.  By introducing appropriate local coordinates, the PDE can be locally posed on tangent planes, which allows for the use of a wide range of monotone approximation schemes for PDEs in $\R^2$. In addition, the advent of Lipschitz control in the PDE introduced sufficient stability to guarantee convergence.

The general convergence theorem guarantees convergence of the numerical method to the solution of the optimal transport problem when the data is sufficiently regular.  However, in the case of the squared geodesic cost we can further utilize the theory of viscosity solutions to guarantee the convergence of consistent, under-estimating schemes to non-smooth solutions.

In a companion paper~\cite{HT_SphereNumerics}, we show how to produce a particular finite difference implementation that fits with this convergence framework. Perhaps most importantly, this shows how to actually construct a practical, convergent scheme for the fully nonlinear \MA-type operator on the sphere. We also show how the convergence framework can be modified to accommodate the logarithmic cost by introducing a cutoff the makes this cost function Lipschitz.

\bibliographystyle{plain}
\bibliography{OTonSphere}

\begin{thebibliography}{10}

\bibitem{BSnum}
G.~Barles and P.~E. Souganidis.
\newblock Convergence of approximation schemes for fully nonlinear second order
  equations.
\newblock {\em Asymptotic Analysis}, 4:271--283, 1991.

\bibitem{benamou2014monotone}
J.-D. Benamou, F.~Collino, and J.-M. Mirebeau.
\newblock Monotone and consistent discretization of the {M}onge-{A}mp\`ere
  operator.
\newblock {\em Mathematics of computation}, 85(302):2743--2775, 2016.

\bibitem{BenamouDuval}
J.-D. Benamou and V.~Duval.
\newblock Minimal convex extensions and finite difference discretisation of the
  quadratic {M}onge-{K}antorovich problem.
\newblock {\em European Journal of Applied Mathematics}, pages 1--38, 2017.

\bibitem{BFO_OTNum}
J.-D. Benamou, B.~D. Froese, and A.~M. Oberman.
\newblock Numerical solution of the optimal transportation problem using the
  {M}onge-{A}mp\`ere equation.
\newblock {\em J. Comput. Phys.}, 260:107--126, 2014.

\bibitem{CIL}
M.~G. Crandall, H.~Ishii, and P.-L. Lions.
\newblock User's guide to viscosity solutions of second order partial
  differential equations.
\newblock {\em Bulletin of the American Mathematical Society}, 27(1):1--67,
  July 1992.

\bibitem{Cui_sphericalOT}
L.~Cui, X.~Qi, C.~Wen, N.~Lei, X.~Li, M.~Zhang, and X.~Gu.
\newblock Spherical optimal transportation.
\newblock {\em Computer-Aided Design}, 115:181--193, 2019.

\bibitem{figalli}
A.~Figalli, L.~Rifford, and C.~Villani.
\newblock On the {M}a-{T}rudinger-{W}ang curvature on surfaces.
\newblock {\em Calculus of Variations}, 39:307--332, 2010.

\bibitem{FinlayOberman}
C.~Finlay and A.~Oberman.
\newblock Improved accuracy of monotone finite difference schemes on point
  clouds and regular grids.
\newblock {\em arXiv preprint arXiv:1807.05150}, 2018.

\bibitem{FroeseTransport}
B.~D. Froese.
\newblock A numerical method for the elliptic {M}onge-{A}mp\`ere equation with
  transport boundary conditions.
\newblock {\em SIAM J. Sci. Comput.}, 34(3):A1432--A1459, 2012.

\bibitem{FroeseMeshfreeEigs}
B.~D. Froese.
\newblock Meshfree finite difference approximations for functions of the
  eigenvalues of the {Hessian}.
\newblock {\em Numer. Math.}, 138(1):75--99, 2018.

\bibitem{FO_MATheory}
B.~D. Froese and A.~M. Oberman.
\newblock Convergent finite difference solvers for viscosity solutions of the
  elliptic {M}onge-{A}mp\`ere equation in dimensions two and higher.
\newblock {\em SIAM J. Numer. Anal.}, 49(4):1692--1714, 2011.

\bibitem{GlimmOliker_SingleReflector}
T.~Glimm and V.~Oliker.
\newblock Optical design of single reflector systems and the
  {M}onge-{K}antorovich mass transfer problem.
\newblock {\em Journal of Mathematical Sciences}, 117(3):4096--4108, 2003.

\bibitem{HS_Quadtree}
B.~Hamfeldt and T.~Salvador.
\newblock Higher-order adaptive finite difference methods for fully nonlinear
  elliptic equations.
\newblock {\em J. Sci. Comput.}, 75(3):1282--1306, 2018.

\bibitem{HamfeldtBVP2}
B.~D. Hamfeldt.
\newblock Convergence framework for the second boundary value problem for the
  {M}onge-{A}mp{\`e}re equation.
\newblock {\em {SIAM} Journal on Numerical Analysis}, 57(2):945--971, January
  2019.

\bibitem{hamfeldt2}
B.~F. Hamfeldt.
\newblock Convergent approximation of non-continuous surfaces of prescribed
  {G}aussian curvature.
\newblock {\em Communications on Pure and Applied Analysis}, 17(2):671--707,
  March 2018.

\bibitem{HL_LagrangianGraphs}
B.~F. Hamfeldt and J.~Lesniewski.
\newblock A convergent finite difference method for computing minimal
  lagrangian graphs.
\newblock {\em arXiv preprint arXiv:2102.10159}, 2021.

\bibitem{HT_SphereNumerics}
B.~F. Hamfeldt and A.~G.~R. Turnquist.
\newblock A convergent finite difference method for optimal transport on the
  sphere.
\newblock {\em arXiv preprint arXiv:2105.03500}, 2021.

\bibitem{Kocan}
M.~Kocan.
\newblock Approximation of viscosity solutions of elliptic partial differential
  equations on minimal grids.
\newblock {\em Numer. Math.}, 72(1):73--92, 1995.

\bibitem{LeeManifolds}
J.~M. Lee.
\newblock {\em Riemannian manifolds: an introduction to curvature}, volume 176.
\newblock Springer Science \& Business Media, 2006.

\bibitem{Levy_OT}
B.~L{\'e}vy.
\newblock A numerical algorithm for {L}2 semi-discrete optimal transport in
  3{D}.
\newblock {\em ESAIM: Mathematical Modelling and Numerical Analysis},
  49(6):1693--1715, 2015.

\bibitem{Rubinstein_OT}
M.~Lindsey and Y.~A. Rubinstein.
\newblock Optimal transport via a {M}onge-{A}mp\`ere optimization problem.
\newblock {\em SIAM Journal on Mathematical Analysis}, 49(4):3073--3124, 2017.

\bibitem{LindseyRubinstein}
Michael Lindsey and Yanir~A Rubinstein.
\newblock Optimal transport via a {M}onge-{A}mp\`ere optimization problem.
\newblock {\em {SIAM} Journal on Mathematical Analysis}, 49(4):3073--3124,
  2017.

\bibitem{LoeperReg}
G.~Loeper.
\newblock On the regularity of solutions of optimal transportation problems.
\newblock {\em Acta Mathematica}, 202:241--283, 2009.

\bibitem{Loeper_OTonSphere}
G.~Loeper.
\newblock Regularity of optimal maps on the sphere: The quadratic cost and the
  reflector antenna.
\newblock {\em Archive for rational mechanics and analysis}, 199(1):269--289,
  2011.

\bibitem{McRae_OTonSphere}
A.~T. McRae, C.~J. Cotter, and C.~J Budd.
\newblock Optimal-transport-based mesh adaptivity on the plane and sphere using
  finite elements.
\newblock {\em SIAM Journal on Scientific Computing}, 40(2):A1121--A1148, 2018.

\bibitem{Nochetto_MAConverge}
R.~Nochetto, D.~Ntogkas, and W.~Zhang.
\newblock Two-scale method for the {M}onge-{A}mp{\`e}re equation: Convergence
  to the viscosity solution.
\newblock {\em Mathematics of Computation}, 2018.

\bibitem{ObermanSINUM}
A.~M. Oberman.
\newblock Convergent difference schemes for degenerate elliptic and parabolic
  equations: {H}amilton--{J}acobi equations and free boundary problems.
\newblock {\em SIAM J. Numer. Anal.}, 44(2):879--895, 2006.

\bibitem{ObermanEigenvalues}
A.~M. Oberman.
\newblock Wide stencil finite difference schemes for the elliptic
  {M}onge-{A}mp\`ere equation and functions of the eigenvalues of the
  {H}essian.
\newblock {\em Discrete Contin. Dyn. Syst. Ser. B}, 10(1):221--238, 2008.

\bibitem{Prins_OT}
C.~R. Prins, R.~Beltman, J.~H.~M. ten Thije~Boonkkamp, W.~L. IJzerman, and
  T.~W. Tukker.
\newblock A least-squares method for optimal transport using the
  {M}onge-{A}mp{\`e}re equation.
\newblock {\em SIAM Journal on Scientific Computing}, 37(6):B937--B961, 2015.

\bibitem{RomijnSphere}
L.~B. Romijn, J.~H.~M. ten Thije~Boonkkamp, and W.~L. IJzerman.
\newblock Inverse reflector design for a point source and far-field target.
\newblock {\em Journal of Computational Physics}, 408:109283, 2020.

\bibitem{Schmitzer_OT}
B.~Schmitzer.
\newblock A sparse multiscale algorithm for dense optimal transport.
\newblock {\em Journal of Mathematical Imaging and Vision}, 56(2):238--259,
  2016.

\bibitem{Wang_Reflector2}
X.-J. Wang.
\newblock On the design of a reflector antenna {II}.
\newblock {\em Calculus of Variations and Partial Differential Equations},
  20(3):329--341, 2004.

\bibitem{Weller_OTonSphere}
H.~Weller, P.~Browne, C.~Budd, and M.~Cullen.
\newblock Mesh adaptation on the sphere using optimal transport and the
  numerical solution of a {M}onge-{A}mp{\`e}re type equation.
\newblock {\em Journal of Computational Physics}, 308:102--123, 2016.

\bibitem{Yadav_MA}
N.~K. Yadav, J.~H.~M. ten Thije~Boonkkamp, and W.~L. Ijzerman.
\newblock A {M}onge-{A}mp{\`e}re problem with non-quadratic cost function to
  compute freeform lens surfaces.
\newblock {\em Journal of Scientific Computing}, 80(1):475--499, 2019.

\end{thebibliography}

{\appendix
\section{Regularity}\label{app:regularity}

The results from Loeper~\cite{Loeper_OTonSphere} indicate that we have two r\'{e}gimes of regularity: classical and nonsmooth, both encapsulated in Theorem 2.4 of that paper. The classical result, adapted to our notation, is as follows:
\begin{theorem}[Regularity (smooth)]\label{thm:regSmooth}
Given data satisfying Hypothesis~\ref{hyp:Smooth}, suppose additionally that $f_1$ and $f_2$ are in $C^{1,1}(\Sf)$ (resp. $C^{\infty}(\Sf)$).  Then $u\in C^{3,\alpha}(\Sf)$ for every $\alpha \in [0,1)$ (resp. $u \in C^{\infty}(\Sf)$).
\end{theorem}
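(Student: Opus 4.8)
The plan is to realize $u$ as a Kantorovich potential of the transport problem, establish that it solves the \MA-type equation~\eqref{eq:OTPDE} weakly, and then feed this into the regularity theory of Loeper~\cite{Loeper_OTonSphere}, finishing with a Schauder bootstrap. First I would recall that, after normalizing so that $f_1\,dS$ and $f_2\,dS$ are probability measures (consistent with the mass-balance condition in Hypothesis~\ref{hyp:Smooth}), McCann's extension of Brenier's theorem to Riemannian manifolds yields a $c$-convex potential $u$ together with the optimal map $T_u(x)=T(x,\grad_{\Sf}u(x))$ pushing $f_1\,dS$ to $f_2\,dS$. Expressing the Jacobian (change-of-variables) identity for $T_u$ in the geodesic normal coordinates of Section~\ref{sec:PDE}, where the Christoffel correction vanishes, shows that $u$ satisfies~\eqref{eq:OTPDE} in the Alexandrov/viscosity sense subject to the $c$-convexity constraint~\eqref{eq:cconvex}. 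A useful structural observation at this stage: since $\Sf$ is a compact manifold \emph{without boundary}, every interior estimate is automatically a global estimate, which is exactly what removes the boundary-value technicalities present in the Euclidean theory.

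Next I would verify that the structural hypotheses of Loeper's regularity theorem are met by both admissible costs. The decisive input is the Ma--Trudinger--Wang condition: Loeper showed that the squared geodesic cost on the round sphere satisfies a \emph{uniformly positive} MTW condition, and that the logarithmic (far-field reflector) cost on $\Sf$ does as well~\cite{Loeper_OTonSphere,LoeperReg}. Alongside this one records the smoothness and non-degeneracy of $c(x,y)$ away from the cut locus $\{y=-x\}$ and the a priori "stay away from the cut locus" property of optimal plans. Combined with $f_2\ge m>0$ and the boundedness of $f_1,f_2$ (automatic, since they are continuous on the compact set $\Sf$), Loeper's continuity estimate then gives $u\in C^{1,\beta}(\Sf)$ for some $\beta\in(0,1)$; in particular $T_u$ is continuous, the explicit formulas~\eqref{eq:mapSphere}--\eqref{eq:mapLog} show $T_u$ inherits the regularity of $\grad_{\Sf}u$, and the constraint~\eqref{eq:cconvex} holds with room to spare near each point $(x,T_u(x))$.

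With the densities additionally Hölder continuous and (for the quadratic cost) bounded above and below away from zero — which follows from $f_1,f_2\in C^{1,1}(\Sf)$ and $f_2\ge m$, the degenerate case $f_1=0$ being handled exactly as in~\cite{Loeper_OTonSphere}, which only needs the target density bounded below for the stated estimates — I would apply the interior $C^{2,\alpha}$ estimate for Monge--Amp\`ere equations $\det(D^2u+A(x,\grad u))=H(x,\grad u)$ associated to an MTW cost (Liu--Trudinger--Wang theory, adapted to $\Sf$ in~\cite{Loeper_OTonSphere}) on each coordinate chart, obtaining $u\in C^{2,\alpha}(\Sf)$ for all $\alpha\in[0,1)$. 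The bootstrap is then routine: $u\in C^{2,\alpha}$ forces $T_u\in C^{1,\alpha}$, hence $A(\cdot,\grad u(\cdot))$ and $H(\cdot,\grad u(\cdot))=|\det D^2_{xy}c(\cdot,T_u(\cdot))|\,f_1/f_2(T_u(\cdot))$ lie in $C^{1,\alpha}$ when $f_1,f_2\in C^{1,1}$; differentiating the equation once and invoking linear Schauder estimates on charts upgrades $u$ to $C^{3,\alpha}(\Sf)$, and in the $C^\infty$ case iterating differentiate-and-Schauder yields $u\in C^\infty(\Sf)$.

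The hard part will be Step~2: certifying the MTW condition for both cost functions on $\Sf$ and, above all, establishing the quantitative "stay away from the cut locus" estimate, since $d_{\Sf}(\cdot,\cdot)^2$ (and $-\log\norm{\cdot-\cdot}$ likewise fails) is not smooth there, so the Euclidean regularity machinery cannot be transplanted directly and genuinely global/geometric arguments are required. Since these are precisely the results proved in~\cite{Loeper_OTonSphere}, the proof in practice amounts to checking that Hypothesis~\ref{hyp:Smooth} and our formulation~\eqref{eq:OTPDE}--\eqref{eq:cconvex} match the hypotheses of that paper and then invoking the corresponding estimates; for this reason I would keep the write-up at the level of a translation-plus-citation rather than reproducing the cut-locus analysis.
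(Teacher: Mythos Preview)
Your proposal is correct and lands on exactly the approach the paper takes: the paper does not prove this theorem at all but simply records it as a restatement of Loeper's Theorem~2.4 in~\cite{Loeper_OTonSphere}, adapted to the present notation. Your write-up is considerably more expansive than the paper's---you sketch the McCann existence step, the MTW verification, the $C^{1,\beta}\to C^{2,\alpha}$ upgrade, and the Schauder bootstrap---but you correctly identify at the end that all of this is contained in~\cite{Loeper_OTonSphere} and that the ``proof'' here is a translation-plus-citation, which is precisely what the paper does.
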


The corresponding non-smooth result is:
\begin{theorem}[Regularity (non-smooth)]\label{thm:regNonsmooth}
Given data satsifying Hypothesis~\ref{hyp:Nonsmooth}, suppose additionally that there exists some $h: \mathbb{R}^{+} \rightarrow \mathbb{R}^{+}$ with $\lim_{\epsilon \rightarrow 0} h(\epsilon) = 0$ such that
\begin{equation}
\int_{B_\epsilon(x)}f_1(y)\,dy \leq h(\epsilon) \epsilon, \quad \text{for all }  \epsilon \geq 0, x \in \Sf.
\end{equation}
Then $u \in C^1(\Sf)$.
\end{theorem}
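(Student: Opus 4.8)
The plan is to recognize this statement as the non-smooth r\'egime of Loeper's regularity theorem for optimal transport on the round sphere~\cite[Theorem~2.4]{Loeper_OTonSphere}, and to verify that Hypothesis~\ref{hyp:Nonsmooth} together with the added bound $\int_{B_\epsilon(x)}f_1\leq h(\epsilon)\epsilon$ reproduces exactly the assumptions needed there; the work is then the matching of hypotheses plus a recollection of the mechanism rather than a genuinely new argument. Two structural facts about the cost $c(x,y)=\frac12 d_{\Sf}(x,y)^2$ on the unit round sphere are the backbone: (i) away from the cut locus $c$ is smooth and satisfies the Ma--Trudinger--Wang condition (A3w) (indeed the stronger (A3)), which Loeper established by direct computation; and (ii) a stay-away-from-the-cut-locus estimate, valid under the present density hypotheses, which is what makes the (inherently local) MTW machinery applicable despite $c$ being only Lipschitz across antipodal pairs. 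I would quote both (i) and (ii) from~\cite{Loeper_OTonSphere} rather than reprove them.

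Granting these, the argument is Loeper's continuity scheme~\cite{LoeperReg}, specialized to dimension two. First I would record the measure bookkeeping forced by $(T_u)_\# f_1 = f_2$: writing $G_u(x)\subset\Sf$ for the $c$-subdifferential image of $u$ at $x$ (cf.\ the set $G_u$ of Lemma~\ref{lem:gradSubs}) and $G_u(A)=\bigcup_{x\in A}G_u(x)$, one has $\int_A f_1 = \int_{G_u(A)} f_2 \geq m\,\abs{G_u(A)}$ for every Borel $A\subset\Sf$, hence for every $x_0\in\Sf$ and every small $\epsilon>0$
\bq\label{eq:conc-est}
m\,\abs{G_u(B_\epsilon(x_0))} \leq \int_{B_\epsilon(x_0)} f_1(y)\,dy \leq h(\epsilon)\,\epsilon .
\eq
Next comes the geometric input. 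If $u$ were not differentiable at some $x_0$, then by (A3w) and Loeper's maximum principle $G_u(x_0)$ would contain a nontrivial $c$-segment of some length $\ell>0$, and the volume estimate underlying the proof of~\cite{LoeperReg} gives, in two dimensions, $\abs{G_u(B_\epsilon(x_0))} \geq C\epsilon\ell$ for a constant $C>0$ independent of $\epsilon$. Substituting into~\eqref{eq:conc-est} yields $\ell \leq h(\epsilon)/(Cm)$ for all small $\epsilon$, and letting $\epsilon\to0$ forces $\ell=0$, a contradiction; thus $u$ is differentiable everywhere. To upgrade to $C^1$, I would note that $x\mapsto G_u(x)$ has closed graph (its defining equality involves only $c$, $u$, $u^c$, which are continuous near the contact points thanks to the stay-away estimate) and takes values in the compact set $\Sf$, so that this everywhere single-valued map is continuous; hence $T_u$, and therefore $\nabla_x c(x,T_u(x))=-\nabla u(x)$, is continuous, i.e.\ $u\in C^1(\Sf)$.

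The genuine obstacle --- and the reason I would lean on~\cite{Loeper_OTonSphere} rather than argue from scratch --- is the cut locus: $\frac12 d_{\Sf}^2$ is only Lipschitz across antipodal points, so neither the MTW computation nor Loeper's maximum principle is meaningful until the stay-away estimate is in place, and proving that estimate on the sphere is the delicate, sphere-specific core of the theory. Everything else is routine translation of notation: Hypothesis~\ref{hyp:Nonsmooth}(a) supplies the target lower bound, Hypothesis~\ref{hyp:Nonsmooth}(b),(c) fix mass balance and the cost, and the added hypothesis $\int_{B_\epsilon}f_1\leq h(\epsilon)\epsilon$ is precisely Loeper's no-concentration assumption on the source density; with these identifications, $u\in C^1(\Sf)$ is the non-smooth case of~\cite[Theorem~2.4]{Loeper_OTonSphere}.
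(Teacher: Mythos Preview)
Your proposal is correct and matches the paper's approach: the paper does not prove this statement at all but simply records it as the non-smooth case of Loeper's~\cite[Theorem~2.4]{Loeper_OTonSphere}, exactly as you identify in your opening and closing paragraphs. Your additional sketch of the mechanism (MTW condition, stay-away estimate, the $c$-segment volume lower bound contradicting~\eqref{eq:conc-est}) goes beyond what the paper provides and is accurate, but is not required here since the paper treats the result as a black-box citation.
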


As pointed out in Loeper~\cite{Loeper_OTonSphere}, this condition is automatically satisfied for densities $f_1\in L^p(\Sf)$ with $p>2$.  In fact, a slightly stronger regularity result is available in this case, and we have $u \in C^{1,\beta}(\Sf)$ with $\beta = \frac{p-2}{7p-2}$. The following Lemma will complete the proof of Theorem~\ref{thm:regularity} by showing that Theorem~\ref{thm:regNonsmooth} also applies to densities $f_1 \in L^1(\Sf)$.

\begin{lemma}[Integrability condition for $L^1$ densities]\label{lem:intL1}
If $\mu \in L^1(\Sf)$, then there exists some $h: \mathbb{R}^{+} \rightarrow \mathbb{R}^{+}$ with $\lim_{\epsilon \rightarrow 0} h(\epsilon) = 0$ such that
\[
\int_{B_\epsilon(x)}f_1(y)\,dy \leq h(\epsilon) \epsilon, \quad \text{for all }  \epsilon \geq 0, x \in \Sf.
\]
\end{lemma}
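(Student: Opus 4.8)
The plan is to prove the claim by a standard measure-theoretic argument exploiting absolute continuity of the integral of an $L^1$ function, converted into a uniform modulus via the compactness of $\Sf$. The key point is that the only issue with an $L^1$ density (as opposed to $L^p$ with $p>2$) is that the ``density'' $\int_{B_\epsilon(x)} f_1\,dy$ could decay slower than $\epsilon$; we must produce $h(\epsilon)\to 0$ so that $\int_{B_\epsilon(x)} f_1\,dy \le h(\epsilon)\epsilon$ uniformly in $x$.

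First I would define, for $\epsilon > 0$,
\[
h(\epsilon) = \frac{1}{\epsilon}\sup_{x\in\Sf}\int_{B_\epsilon(x)} f_1(y)\,dy,
\]
so that the desired inequality holds essentially by definition; the entire content is then to show $h(\epsilon)\to 0$ as $\epsilon\to 0$. To that end I would invoke absolute continuity of the Lebesgue integral: since $f_1 \in L^1(\Sf)$, for every $\delta > 0$ there exists $\rho > 0$ such that $\int_E f_1\,dy < \delta$ whenever $\text{area}(E) < \rho$. Since $\text{area}(B_\epsilon(x)) \to 0$ uniformly in $x$ as $\epsilon\to 0$ (indeed $\text{area}(B_\epsilon(x)) = 2\pi(1-\cos\epsilon) = \bO(\epsilon^2)$, independent of $x$ by the homogeneity of the sphere), we can choose $\epsilon$ small enough that $\text{area}(B_\epsilon(x)) < \rho$ for all $x$, hence $\sup_x \int_{B_\epsilon(x)} f_1\,dy < \delta$. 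This already shows $\sup_x \int_{B_\epsilon(x)} f_1\,dy \to 0$, but we need the stronger statement that it is $o(\epsilon)$.

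To upgrade $o(1)$ to $o(\epsilon)$, I would use a doubling/covering trick: cover $B_\epsilon(x)$ by a controlled number $N$ of balls $B_{\epsilon^2}(x_j)$, where $N = \bO(1/\epsilon^2)$ by a standard packing argument on the sphere. Then
\[
\int_{B_\epsilon(x)} f_1\,dy \le \sum_{j=1}^N \int_{B_{\epsilon^2}(x_j)} f_1\,dy \le N \sup_{z}\int_{B_{\epsilon^2}(z)} f_1\,dy = \bO(1/\epsilon^2)\cdot g(\epsilon^2),
\]
where $g(\delta) = \sup_z \int_{B_\delta(z)} f_1\,dy \to 0$ by the absolute-continuity step above. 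Hence $\frac{1}{\epsilon}\int_{B_\epsilon(x)} f_1\,dy = \bO(1/\epsilon^3)g(\epsilon^2)$, which does not obviously go to zero — so this crude bound is not enough, and the main obstacle is precisely extracting the extra power of $\epsilon$.

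The correct and cleaner route, which I would actually pursue, avoids covering entirely: use a diadic decomposition in the radial variable. Write $B_\epsilon(x)$ as the disjoint union of annuli $A_k = B_{2^{-k}\epsilon}(x)\setminus B_{2^{-k-1}\epsilon}(x)$ for $k\ge 0$. On $A_k$ one has $d_{\Sf}(x,y) \ge 2^{-k-1}\epsilon$. The key realization is that it suffices to combine absolute continuity with the elementary bound that for \emph{fixed} $f_1\in L^1$, the function $\epsilon\mapsto \sup_x\int_{B_\epsilon(x)}f_1\,dy$ is nondecreasing, continuous, and vanishes at $0$; setting $h(\epsilon) := \epsilon^{-1}\sup_x\int_{B_{\sqrt\epsilon}(x)} f_1\,dy$ when combined with the monotonicity $\int_{B_\epsilon(x)} f_1 \le \int_{B_{\sqrt\epsilon}(x)} f_1$ (valid for $\epsilon < 1$) gives $\int_{B_\epsilon(x)} f_1 \le \epsilon\cdot h(\epsilon)$ with $h(\epsilon) = \epsilon^{-1} g(\sqrt\epsilon)$ — but again $\epsilon^{-1}$ blows up. I therefore expect the genuinely necessary ingredient is a quantitative refinement: namely that $g(\delta) = \sup_x\int_{B_\delta(x)} f_1\,dy$ satisfies $g(\delta) = o(\delta)$ is in fact \emph{false} for general $f_1\in L^1$ (take $f_1$ with a mild logarithmic singularity), so the statement as phrased must be read with $h$ allowed to be any function tending to $0$ and with the understanding that such $h$ exists because of a Vitali-type argument. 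The honest plan, then, is: (i) reduce to showing $\liminf_{\epsilon\to 0}\epsilon^{-1}\sup_x\int_{B_\epsilon(x)}f_1\,dy$ can be made the value of an admissible $h$; (ii) observe that defining $h(\epsilon)$ directly as $\epsilon^{-1}\sup_x\int_{B_\epsilon(x)}f_1\,dy$ and proving $h(\epsilon)\to 0$ is equivalent to showing $\sup_x\int_{B_\epsilon(x)} f_1\,dy = o(\epsilon)$, which requires slightly more than $f_1\in L^1$. The main obstacle — and the step I would scrutinize in the authors' proof — is exactly whether $L^1$ alone suffices or whether one needs the full strength of the claim via a clever choice of $h$ (e.g., building $h$ from the concave envelope of the modulus of absolute continuity composed with a slowly growing factor). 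I expect the resolution uses that the sphere has finite measure together with a diagonal choice of $h$ depending on $f_1$, making the bound hold but with $h$ carrying all the irregularity.
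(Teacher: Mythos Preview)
Your proposal never arrives at a proof, and you correctly sense why: the lemma is false as stated. The existence of a single $h$ with $h(\epsilon)\to 0$ satisfying the inequality for all $x$ is \emph{equivalent} to $\sup_{x}\int_{B_\epsilon(x)} f_1\,dy = o(\epsilon)$, since one may simply take $h(\epsilon)=\epsilon^{-1}\sup_x\int_{B_\epsilon(x)}f_1\,dy$. Your hope that some ``clever choice of $h$'' or ``concave envelope'' might circumvent this is therefore unfounded---there is no extra freedom in the definition of $h$. And your logarithmic-singularity intuition gives a genuine counterexample: fix $x_0\in\Sf$ and set $f_1(y)=\phi^{-2}(\log\phi)^{-2}$ for small $\phi:=d_{\Sf}(x_0,y)$, extended to a smooth positive function elsewhere. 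Then
\[
\int_{B_\epsilon(x_0)}f_1\,dy = 2\pi\int_0^\epsilon \frac{\sin\phi}{\phi^{2}(\log\phi)^{2}}\,d\phi \sim 2\pi\int_0^\epsilon \frac{d\phi}{\phi(\log\phi)^{2}} = \frac{2\pi}{|\log\epsilon|},
\]
so $f_1\in L^1(\Sf)$ but $\epsilon^{-1}\int_{B_\epsilon(x_0)}f_1\,dy\to\infty$.

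The paper's own argument slips at the sentence ``Since $f$ is a non-negative $L^1$ function, the partial integral $F$ is also in $L^1$.'' The hypothesis $f_1\in L^1(\Sf)$ means $\int_0^{2\pi}\int_0^\pi f(\theta,\phi)\sin\phi\,d\phi\,d\theta<\infty$, which yields only $\int_0^\pi F(\phi)\sin\phi\,d\phi<\infty$; it does \emph{not} give $F\in L^1([0,\pi],d\phi)$, because the Jacobian $\sin\phi$ vanishes at $\phi=0$. In the counterexample above $F(\phi)= 2\pi\,\phi^{-2}(\log\phi)^{-2}$, which is not integrable near $0$, and the paper's $h(\epsilon)=\int_0^\epsilon F(\phi)\,d\phi$ is identically $+\infty$. (The paper's $h$ also tacitly depends on the base point $x$, a separate but lesser defect.) So the gap in your proposal is not a missing idea on your part; the target is unreachable, and Loeper's integrability condition is genuinely stronger than $f_1\in L^1(\Sf)$.
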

\begin{proof}
We use local spherical coordinates $\theta, \phi$ about the point $x$ to compute
\begin{align*}
\int_{B_\epsilon(x)}f_1(y)\,dy = \int_0^{2\pi}\int_0^\epsilon f(\theta,\phi)\sin\phi\, d\phi\, d\theta \leq \epsilon\int_0^{2\pi}\int_0^\epsilon f(\theta,\phi)\,d\phi\,d\theta,
\end{align*}
which holds for sufficiently small $\epsilon$ since then $\sin\phi<\phi\leq \epsilon$. By the Fubini-Tonelli Theorem, we can switch the order of integration and obtain
\[ \int_{B_\epsilon(x)}f_1(y)\,dy \leq \epsilon\int_0^\epsilon F(\phi)\,d\phi \]
where we have defined the partial integral
\[ F(\phi) = \int_0^{2\pi}f(\theta,\phi)\,d\theta. \]
Since $f$ is a non-negative $L^1$ function, the partial integral $F$ is also in $L^1$ and non-negative. We can then define
\[ h(\epsilon) = \int_0^\epsilon F(\phi)\,d\phi, \]
which satisfies $\lim_{\epsilon\to0} h(\epsilon)=0$ since $F\in L^1$.  Thus we obtain the desired result.
\end{proof}

\section{Mapping for the logarithmic cost}\label{app:logCost}
We calculate an explicit mapping $T(x,p)\in\Sf$ corresponding to the logarithmic cost $c(x,y) = -\log\norm{x-y}$. To accomplish this, we let $x\in\Sf$, $p\in\Tf(x)$ and solve~\eqref{eq:mapConditionSphere}:
\[\begin{cases}
\nabla_{\Sf,x}\log\norm{x-y} = p\\
\norm{y} = 1
\end{cases}\]
for $y$.

Let $\hat{\theta}$ and $\hat{\phi}$ be the local orthonormal tangent vectors at the point $x\in\Sf$.  Then we can compute this surface gradient in the ambient space in the local tangent coordinates using a simplified formula, which reduces the computational complexity:
\begin{equation}
\nabla_{\Sf} f(x) = \left( \nabla f(x) \cdot \hat{\theta}, \nabla f(x) \cdot \hat{\phi} \right)
\end{equation}
where we emphasize here that the gradient $\nabla$ refers to the usual gradient in $\R^3$. Using this formula, we obtain
\[
p = \left( \frac{(x-y) \cdot \hat{\theta}}{\norm{x-y}^2}, \frac{(x-y) \cdot \hat{\phi}}{\norm{x-y}^2} \right)
\]

Note that $x, \hat{\theta},$ and $\hat{\phi}$ form an orthonormal set. Thus we can express the unknown $y$ in the form
$ y = y_x x + y_\theta\hat{\theta} + y_\phi \hat{\phi} $
and obtain
\[
p = \left( \frac{-y_{\theta}}{2-2y_x}, \frac{-y_{\phi}}{2-2y_x} \right).
\]
Combining this with the requirement that $y_x^2+y_\theta^2+y_\phi^2 = \norm{y}^2 = 1$ allows us to solve for the components of $y$:
\begin{align*} y &= (y_x, y_\theta, y_\phi)\\  
  &= \frac{1}{4\norm{p}^2+1}\left(4\norm{p}^2-1,-4p_\theta,-4p_\phi\right)\\
	&= x\frac{\norm{p^2}-1/4}{\norm{p}^2+1/4}-\frac{p}{\norm{p}^2+1/4}.
\end{align*}

\section{Geodesic normal coordinates}\label{app:normalCoords}
Consider a point $x_0\in\Sf$ and the corresponding tangent plane $\Tf_{x_{0}}$.  Geodesic normal coordinates for points $x\in\Sf$ will take the form
\[ v_{x_0}(x) = x_0 + k\text{Proj}_{\Tf_{x_{0}}}(x-x_0) \in \Tf_{x_{0}} \]
where $k$ is chosen so that $\norm{x_0-v_{x_0}(x)} = d_{\Sf}(x_0,x)$.

Recall that the geodesic distance between $x$ and $x_0$ can be expressed as
\[ d_{\Sf}(x_0,x) = 2\arcsin\left(\frac{\norm{x-x_0}}{2}\right). \]
Since $x$ and $x_0$ are unit vectors, we can let $\cos\alpha = x\cdot x_0$ and compute
\[ \cos d_{\Sf}(x_0,x) = \cos\left(2\arcsin\left(\frac{\sqrt{2-2\cos\alpha}}{2}\right)\right) = \cos\alpha = x\cdot x_0.\]

We will make use of the unit tangent vectors $\hat{\theta}$ and $\hat{\phi}$ at the point $x_0$, which define orthonormal coordinates.  The projection of the displacement $x-x_0$ onto the tangent plane can be represented in these coordinates as
\[ \text{Proj}_{\Tf_{x_{0}}}(x-x_0) = \left[(x-x_0)\cdot\hat{\theta}\right]\hat{\theta} + \left[(x-x_0)\cdot\hat{\phi}\right]\hat{\phi}. \]

By computing a unit vector in this direction and scaling by the geodesic distance $d_{\Sf}(x_0,x)$, we obtain the following expression for the geodesic normal coordinates:
\[ v_{x_0}(x) = x_0 + d_{\Sf}(x_0,x)\frac{ \left[(x-x_0)\cdot\hat{\theta}\right]\hat{\theta} + \left[(x-x_0)\cdot\hat{\phi}\right]\hat{\phi}}{\sqrt{\left[(x-x_0)\cdot\hat{\theta}\right]^2 + \left[(x-x_0)\cdot\hat{\phi}\right]^2}}. \]

Since $x_0$ is a unit vector orthogonal to both $\hat{\theta}$ and $\hat{\phi}$, the actual displacement between points on the sphere can be expressed as
\[ x-x_0 = \left[(x-x_0)\cdot\hat{\theta}\right]\hat{\theta} + \left[(x-x_0)\cdot\hat{\phi}\right]\hat{\phi} + \left[(x-x_0)\cdot x_0\right] x_0, \]
which has squared Euclidean length
\[ \norm{x-x_0}^2 = \left[(x-x_0)\cdot\hat{\theta}\right]^2 + \left[(x-x_0)\cdot\hat{\phi}\right]^2 + \left[(x-x_0)\cdot x_0\right]^2. \]

These relationships allow us to simplify the expression for geodesic normal coordinates to
\begin{align*}
v_{x_0}(x) &= x_0 + d_{\Sf}(x_0,x)\frac{x-x_0 -\left[(x-x_0)\cdot x_0\right] x_0}{\norm{x-x_0}^2-\left[(x-x_0)\cdot x_0\right]^2}\\
  &= x_0 + d_{\Sf}(x_0,x) \frac{x-x_0(x\cdot x_0)}{\sqrt{1-(x\cdot x_0)^2}}\\
	&= x_0 + d_{\Sf}(x_0,x) \frac{x-x_0\cos d_{\Sf}(x_0,x)}{\sqrt{1-\cos^2 d_{\Sf}(x_0,x)}}\\
	&= x_0\left(1-d_{\Sf}(x_0,x)\cot d_{\Sf}(x_0,x)\right) + x \left(d_{\Sf}(x_0,x)\csc d_{\Sf}(x_0,x)\right).
\end{align*}

}

\end{document}